\numberwithin{equation}{section}
\theoremstyle{plain}
\newtheorem{theorem}[equation]{Theorem}
\newtheorem{proposition}[equation]{Proposition}
\newtheorem{corollary}[equation]{Corollary}
\newtheorem{lemma}[equation]{Lemma}
\newtheorem{question}[equation]{Question}
\theoremstyle{definition}
\newtheorem{definition}[equation]{Definition}
\theoremstyle{definition}
\newtheorem{remark}[equation]{Remark}
\theoremstyle{definition}
\theoremstyle{definition}
\newtheorem{example}[equation]{Example}
\theoremstyle{definition}
\newenvironment{claim}[1]{\paragraph{{\it Claim #1:}}}{}
\newenvironment{proofclaim}{\paragraph{{\it Proof of Claim:}}}{\hfill$\blacksquare$\\}
\title{\scshape\bfseries Families of retractions and families of closed subsets on compact spaces.}
\author{{\bfseries S. Garc\'ia-Ferreira}}
\address{Centro de Ciencias Matem\'aticas\\
         Universidad Nacional Aut\'onoma de M\'exico\\
				 Campus Morelia\\
         Apartado Postal 61-3, Santa Mar\'ia, 58089, Morelia, Michoac\'an, M\'exico.}
\email{sgarcia@matmor.unam.mx, novo1126@hotmail.com}
\author{{\bfseries C. Yescas-Aparicio}}
\subjclass[2010]{Primary 54D30, 54C15}
\keywords{$r$-skeleton, weak $c$-skeleton, $c$-skeleton, $\pi$-skeleton, Valdivia compact spaces,  Corson compact spaces, Alexandroff duplicated}
\date{}
\thanks{Research of the first-named author was supported
PAPIIT grant no. IN-105318 }
\begin{document}

\begin{abstract} It is know that the Valdivia compact spaces can be characterized by a special family of retractions called $r$-skeleton (see \cite{kubis1}).  Also we know that there are compact spaces with $r$-skeletons which are not Valdivia.  In this paper, we shall study $r$-squeletons and special families of closed subsets of compact spaces. We prove that if $X$ is a zero-dimensional compact space and  $\{r_s:s\in \Gamma\}$ is an $r$-skeleton on $X$ such that $|r_s(X)| \leq \omega$ for all $s\in \Gamma$, then $X$ has a dense subset consisting of isolated points. Also we give conditions to an $r$-skeleton in order that this $r$-skeleton can be extended to an $r$-skeleton on  the  Alexandroff Duplicate of the base space. The standard definition of a Valdivia compact spaces is via a $\Sigma$-product of a power of the unit interval. Following this fact we introduce the notion of $\pi$-skeleton on a  compact space $X$ by embedding $X$ in a suitable power of the unit interval together with  a pair $(\mathcal{F},\varphi)$, where $\mathcal{F}$ is family of  metric separable subspaces of $X$ and $\varphi$ an $\omega$-monotone function which satisfy certain properties. This new notion generalize the idea of a $\Sigma$-product.
  We prove that a compact space admits a retractional-skeleton  iff it admits a $\pi$-skeleton. This equivalence allows to give a new proof of the fact that the product of compact spaces with retractional-skeletons admits an retractional-skeleton (see \cite{cuth1}). In \cite{casa1}, the Corson compact spaces are characterized by a special family of closed subsets. Following  this direction, we  introduce the notion of weak $c$-skeleton  which  under certain conditions  characterizes the Valdivia compact spaces and compact spaces with $r$-skeletons.
 \end{abstract}

\maketitle

%%%%%%%%%%%%%%%%%%%%%%
\section{Introduction}

 The Valdivia and Corson compact spaces are spaces widely studied in the Abstract Functional Analysis. From the study of Valdivia compact spaces done in  \cite{kubis1} surged the concept of {\it $r$-skeleton} as a generalization of the Valdivia compact spaces,  this new concept is a family of retractions with certain properties:

\begin{definition}\label{skeleton}
Let  $X$ be a space. An $r$-{\it skeleton} on $X$ is a family $\{r_s:s\in \Gamma\}$ of retractions in $X$, indexed by an up-directed $\sigma$-complete partially ordered set $\Gamma$, such that
\begin{enumerate}
\item[$(i)$] $r_s(X)$ is a cosmic space, for each $s\in \Gamma$;

\item[$(ii)$] $r_s=r_s\circ r_t=r_t\circ r_s$ whenever $s\leq t$;

\item[$(iii)$] if $\langle s_n\rangle_{n<\omega}\subseteq \Gamma$ is an increasing sequence and $s=\sup \{s_n : n<\omega\}$, then $r_s(x)=\lim_{n\rightarrow \infty}r_{s_n}(x)$ for each $x\in X$; and

\item[$(iv)$] for each $x\in X$, $x=\lim_{s\in \Gamma}r_s(x)$.
\end{enumerate}
The {\it induced space} on $X$ defined by $\{r_s: s\in \Gamma\}$ is  $\bigcup_{s\in \Gamma}r_s(X)$. If $X=\bigcup_{s\in \Gamma}r_s(X)$, then we will say that $\{r_s:s\in \Gamma\}$ is a \textit{full $r$-skeleton}. Besides, if $r_s\circ r_t= r_t\circ r_s$, for any $s,t\in \Gamma$, then we will say that  $\{r_s:s\in \Gamma\}$ is \textit{commutative}.
\end{definition}

It is shown in  \cite{cuth1}  that the Corson compact spaces are those compact spaces which have a  full $r$-skeleton, and in the  paper \cite{kubis1} it is proved that the Valdivia compact spaces are  those compact spaces which   have a commutative $r$-skeleton. The ordinal space $[0,\omega_2]$ is a compact space with an $r$-skeleton, but it does not admit neither a commutative nor a full $r$-skeleton (see Example \ref{resqordinal}). Thus, the class of spaces with $r$-skeletons contains properly the Valdivia  compact spaces. In the literature, some other authors use the name  \textit{retractional skeleton} instead of $r$-skeleton, and the name \textit{non commutative Valdivia compact space} is used to address to a  compact space with an $r$-skeleton.

\medskip

In this article, we shall prove several results on compact spaces with an  $r$-skeletons as we describe in the next lines:

\medskip

$\bullet$ In the  third section,  we show that if $X$ is a  zero-dimensional compact space with an $r$-skeleton
$\{r_s:s\in \Gamma\}$  such that $|r_s(X)| \leq \omega$ for all $s\in \Gamma$, then $X$ has a dense subset consisting of isolated points.

$\bullet$ An interesting research topic has been the study of the connections between the topological properties of a space $X$ and its Alexandroff duplicate (denoted by $AD(X)$). Indeed, in the paper \cite{soma1} the author proved that if  $X$ be a compact space and $AD(X)$ has an $r$-skeleton, then $X$ has an $r$-skeleton. With respect to the reciprocal implication of this assertion, in the papers \cite{reynaldo1} and \cite{kalenda2}, it is shown that if  $X$ is a compact space with full $r$-skeleton, then $AD(X)$ has a full $r$-skeleton. Following this direction, one can wonder when an $r$-skeleton on a space can be extended to an $r$-skeleton on its Alexandroff duplicate. One particular case was is due to   J. Somaglia in \cite{soma1} who showed that if $X$ is  a compact space with  an $r$-skeleton and $Y$ the induced space such that $X\setminus Y$ is finite, then $AD(X)$ admits an $r$-skeleton.
 This result of  Somaglia motivated us to give  a necessary and sufficient conditions on the $r$-skeleton of a space $X$, in order that its Alexandroff duplicated $AD(X)$  admits an $r$-skeleton. All these results concerning the Alexandroff duplicate will be described in detail in the fourth  section.

$\bullet$ We recall that the Corson compact spaces are the compact spaces which are contained in a $\Sigma$-product, and the Valdivia compact spaces are the compact spaces whose intersection with a $\Sigma$-product is dense in the compact space.  Since these spaces can be also defined by $r$-skeletons, it is natural to investigate about a generalization of a $\Sigma$-product in order to obtain a characterization of the compact spaces which admit an $r$-skeleton.
This analysis is carried out in the fifth section, in fact we introduce the notion of {\it $\pi$-skeleton} to generalize the $\Sigma$-products and we prove that a compact space has a $\pi$-skeleton iff it has an $r$-skeleton.

$\bullet$ The notion of a full $c$-skeleton was introduced in the article  \cite{casa1} in which the authors proved that the Corson compact spaces are the spaces which admits a full $c$-skeleton.
Following this direction, in the sixth section,  we weaken the definition of $c$-skeleton to obtain the notion of a  weak $c$-skeleton. This concept plus some additional conditions provide  characterizations of both the Valdivia compact spaces and the compact spaces with an $r$-skeleton.

%%%%%%%%%%%%%%%%%%%%%%
\section{Preliminaries}

Our spaces will be Tychonoff (completely regular and Hausdorff). The Greek letter $\omega$ will stand for the first infinite cardinal number. Given an infinite set  $X$,  the symbol $[X]^{\leq \omega}$ will denote the set of all countable  subsets of $X$ and the meaning of $[X]^{<\omega}$ should be clear. A partially ordered set $\Gamma$ is called {\it up-directed} whenever for every $s,s'\in \Gamma$, there is $t\in \Gamma$ such that $s\leq t$ and $s'\leq t$. And $\Gamma$ is named $\sigma$-{\it complete} if $\sup_{n<\omega}s_n\in \Gamma$, for each increasing sequence $ \langle s_n\rangle_{n<\omega} \subseteq \Gamma$.    For a space $X$,  the closure of $A \subseteq X$ will be denoted either by $\overline{A}$ or $cl_X(A)$. If $\alpha$ is an infinite ordinal number, $[0,\alpha]$ will denote the usual ordinal space. For a product $\Pi_{t\in T}X_t$, if $t\in  T$ and  $A\in [T]^{\leq \omega}$, then $\pi_t$ and $\pi_A$ will be denote the projection on the coordinate $t$ and the projection on the set   $A$, respectively. A subbasic open  of a product of spaces  $\Pi_{t\in T}X_t$ will be simply denoted by $[t,V] := \pi_t^{-1}(V)$, where  $t\in T$ and $V$ is an open subset of $X_t$. A space is called {\it cosmic} if it has a countable network. It is known that a compact space is cosmic iff it is metrizable and separable.

\medskip

The next property is useful to prove that a family of retractions is an $r$-skeleton without establishing explicitly the condition $(iv)$ of Definition \ref{skeleton}.

\begin{proposition}[\cite{casa1}]\label{rey2}
Let $\{r_s:s\in \Gamma\}$ be a family of retractions on a countably compact space $X$ satisfying  $(i)$-$(iii)$ of Definition \ref{skeleton}. If $Y=\bigcup_{s\in\Gamma}r_s(X)$, then $x=\lim_{s\in \Gamma}r_s(x)$,  for each $x\in \overline{Y}$.
\end{proposition}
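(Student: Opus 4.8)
The plan is to exploit the three stated conditions $(i)$–$(iii)$ together with countable compactness to upgrade pointwise convergence on the induced space $Y=\bigcup_{s\in\Gamma}r_s(X)$ to pointwise convergence on its closure $\overline{Y}$. First I would fix $x\in\overline{Y}$ and a subbasic (equivalently, basic) neighbourhood $U$ of $x$; the goal is to produce $s_0\in\Gamma$ with $r_s(x)\in U$ for all $s\ge s_0$. The natural obstruction is that the net $\langle r_s(x)\rangle_{s\in\Gamma}$ need not converge a priori; to control it I would argue by contradiction, supposing that $r_s(x)\notin U$ cofinally, i.e. for each $s\in\Gamma$ there is $t\ge s$ with $r_t(x)\notin U$.

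The key step is a standard ``closing-off'' construction using conditions $(ii)$ and $(iii)$. Starting from any $t_0$ with $r_{t_0}(x)\notin U$, I would build an increasing sequence $\langle t_n\rangle_{n<\omega}\subseteq\Gamma$ so that each $r_{t_n}(x)\notin U$ and, simultaneously, approximate $x$ from inside $Y$: since $x\in\overline{Y}$ one can pick points $y_n\in Y$ converging to $x$, say $y_n\in r_{u_n}(X)$, and interleave the $u_n$ into the sequence, using up-directedness to choose each $t_{n+1}\ge t_n$ and $t_{n+1}\ge u_n$ with $r_{t_{n+1}}(x)\notin U$. Let $s=\sup_n t_n\in\Gamma$ (using $\sigma$-completeness). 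By $(iii)$, $r_s(x)=\lim_n r_{t_n}(x)$, and since the $r_{t_n}(x)$ all lie in the closed set $X\setminus U$, we get $r_s(x)\notin U$; in particular $r_s(x)\ne x$. On the other hand, for each fixed $m$ and all $n\ge m$ we have by $(ii)$ that $r_{t_m}(y_n')=r_{t_m}(r_{u_{n}}(\cdot))$ behaves controllably — more precisely, since $u_{n}\le t_{n+1}\le s$, condition $(ii)$ gives $r_s(y_n)=r_s(r_{u_n}(z_n))=r_{u_n}(z_n)=y_n$, so every $y_n$ is a fixed point of $r_s$; continuity of $r_s$ and $y_n\to x$ then force $r_s(x)=\lim_n r_s(y_n)=\lim_n y_n=x$, a contradiction.

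The main obstacle, and the place requiring care, is the bookkeeping in the closing-off argument: one must weave together two demands on the sequence $\langle t_n\rangle$ — staying outside $U$ (to reach the contradiction via $(iii)$) and dominating the indices $u_n$ of an approximating sequence in $Y$ (to make $x$ a limit of fixed points of $r_s$) — while keeping the sequence increasing so that its supremum lies in $\Gamma$. Up-directedness handles the simultaneous domination at each finite stage, and $\sigma$-completeness delivers the limit index; countable compactness is what guarantees that the chosen sequence $y_n\to x$ can be extracted (a countable set accumulating at $x$) and, if needed, that subnets behave well. Once the sequence is built, conditions $(ii)$ and $(iii)$ do the rest essentially formally. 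I expect no further difficulty beyond verifying that a countable neighbourhood base of $x$ in $X$ — or rather, reducing to a single basic neighbourhood as above — suffices, which it does since failure of $r_s(x)\to x$ is witnessed by one such $U$.
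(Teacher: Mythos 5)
There is a genuine gap at the heart of your argument, namely the step ``since $x\in\overline{Y}$ one can pick points $y_n\in Y$ converging to $x$.'' In a countably compact space a point of $\overline{Y}$ need not be a sequential limit from $Y$, and here it in fact \emph{never} is unless $x\in Y$: condition $(ii)$ together with up-directedness and $\sigma$-completeness of $\Gamma$ already forces $Y$ to be countably closed (given a countable $B\subseteq Y$, choose an increasing sequence $\langle t_n\rangle$ dominating indices witnessing $B\subseteq Y$ and set $t=\sup_n t_n$; then $B\subseteq r_t(X)$, which is the fixed-point set of $r_t$ and hence closed, so $\overline{B}\subseteq Y$). Thus any sequence from $Y$ converging to $x$ certifies $x\in Y$, where the proposition is trivial ($r_s(x)=x$ for all large $s$ by $(ii)$); in the only substantive case $x\in\overline{Y}\setminus Y$ your approximating sequence does not exist, and your conclusion $r_s(x)=x$ would place $x$ in $r_s(X)\subseteq Y$, which is false. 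Note also that countable compactness does not supply such a sequence: it yields cluster points of sequences you already have, not sequential density of $Y$.

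The argument can be repaired by keeping your contradiction scheme but weakening the approximation. Fix open sets $x\in W_1\subseteq\overline{W_1}\subseteq W_2\subseteq\overline{W_2}\subseteq U$ and suppose $T=\{t\in\Gamma: r_t(x)\notin U\}$ is cofinal. Given $t_n\in T$, the set $W_1\cap r_{t_n}^{-1}(X\setminus\overline{W_2})$ is an open neighbourhood of $x$ (as $r_{t_n}(x)\notin\overline{W_2}$), so it meets $Y$; pick $y_n$ there, say $y_n\in r_{u_n}(X)$, and then $t_{n+1}\in T$ above both $t_n$ and $u_n$. With $t=\sup_n t_n$, all the points $y_n$ and $z_n:=r_{t_n}(y_n)$ lie in $r_t(X)$, which is countably compact and cosmic, hence compact metrizable; pass to a subsequence with $y_{n_k}\to y\in\overline{W_1}\subseteq W_2$ and $z_{n_k}\to z\in X\setminus W_2$. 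For fixed $m$ and $n_k\ge m$, condition $(ii)$ gives $r_{t_m}(z_{n_k})=r_{t_m}(y_{n_k})$, so $r_{t_m}(z)=r_{t_m}(y)$ for every $m$, and condition $(iii)$ together with $y,z\in r_t(X)$ yields $z=r_t(z)=r_t(y)=y$, a contradiction. This version never needs $y_n\to x$, and it is here, not where you placed it, that countable compactness and condition $(i)$ actually do their work.
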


%%%%%%%%%%%%%%%%%%voy%%%%%%%%%%%%%%%

The next example is a compact space with an $r$-skeleton which  is not Valdivia (other non-trivial examples of non-Valdivia compact spaces which admits $r$-skeletons are given in  \cite{soma2}).

\begin{example}[\cite{kubis1}]\label{resqordinal}
Let $\alpha>\omega_1$  and $\mathbf{C}_\alpha$ the family of all closed countable sets $A\subseteq [0,\alpha]$ such that $0\in A$ and if $p\in A$ is an isolated point in $A$, then $p$ is an isolated  point in $[0,\alpha]$. For each $A\in \mathbf{C}_\alpha$ we define $r_A:[0,\alpha]\rightarrow [0,\alpha]$ by $r_A(x)=\max\{a\in A: a\leq x\}$. The family $\{r_A:A\in \mathbf{C}_\alpha\}$ is a non commutative $r$-skeleton on $[0,\alpha]$.
\end{example}

\medskip

 The $r$-skeletons have been  very important  in the study of certain topological properties of compact spaces (see, for instance, \cite{cuth2}). Indeed,  in the next theorem,  we will highlight the  basic properties of the induced subspace that are  used in next sections.

\begin{theorem}[\cite{kubis}]
Let $X$ be a compact space with an $r$-skeleton and $Y\subseteq X$ be the  induced space by the $r$-skeleton. Then:
\begin{itemize}
\item $Y$ is  countably closed in $X$, i. e. for any $B\in [Y]^{\leq \omega}$ we have $\overline{B}\subseteq Y$,
\item  $Y$ is a Frechet-Urysohn space and
\item $\beta Y = X$.
\end{itemize}
\end{theorem}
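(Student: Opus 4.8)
The plan is to obtain all three conclusions from a single extraction lemma about the $r$-skeleton. Throughout set $Y=\bigcup_{s\in\Gamma}r_s(X)$. I will use three facts, all immediate from Definition~\ref{skeleton}: (a) $r_s(X)=\{x\in X:r_s(x)=x\}$ is the equalizer of $r_s$ with $\mathrm{id}_X$, hence closed in the Hausdorff space $X$, hence compact, and being cosmic it is metrizable and separable (see the Preliminaries); (b) if $s\le t$ then $r_s(X)\subseteq r_t(X)$, since by $(ii)$ every point of $r_s(X)$ is fixed by $r_t$; (c) every countable $B\subseteq Y$ lies in some $r_t(X)$ with $t$ above any prescribed index --- choose for each $b\in B$ an index of $\Gamma$ witnessing $b\in Y$, dominate these countably many indices together with the prescribed one by an increasing $\omega$-sequence (up-directedness), and let $t\in\Gamma$ be its supremum ($\sigma$-completeness); then $B\subseteq r_t(X)$ by (b). Combining (a) and (c), for $B\in[Y]^{\le\omega}$ one gets $\overline B\subseteq r_t(X)\subseteq Y$, which is the first bullet (it even shows $Y$ is $\omega$-bounded); also $\overline Y=X$ by $(iv)$, or by Proposition~\ref{rey2}.

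The technical core is the following extraction lemma, which I would state for finitely many sets at once: \emph{if $A_1,\dots,A_k\subseteq Y$ and $x\in\bigcap_j\overline{A_j}$, then there is $s\in\Gamma$, above any prescribed index, with $r_s(x)\in\overline{A_j\cap r_s(X)}$ for every $j$.} I would prove it by a closing-off construction; I describe the case $k=1$, the general case only requiring that $C_n$ below meet the neighbourhood demands of each $A_j$. Build an increasing sequence $\langle s_n\rangle_{n<\omega}$ with $s_0$ above the prescribed index: given $s_n$, the metrizable space $r_{s_n}(X)$ has a countable base, and since $x\in\overline A$ we have $r_{s_n}(x)\in r_{s_n}(\overline A)\subseteq\overline{r_{s_n}(A)}$, so for every basic neighbourhood $V$ of $r_{s_n}(x)$ in $r_{s_n}(X)$ we may choose $a_V\in A$ with $r_{s_n}(a_V)\in V$; let $C_n\subseteq A$ collect these $a_V$ and, by (c), pick $s_{n+1}\ge s_n$ with $C_n\subseteq r_{s_{n+1}}(X)$. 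Put $s=\sup_n s_n$ and $C=\bigcup_n C_n$, so $C\subseteq A\cap r_s(X)$ by (b). It remains to verify $r_s(x)\in\overline C$ inside the compact metric space $r_s(X)$ --- this is the step I expect to be the real obstacle. Fixing a compatible metric $d$ on $r_s(X)$ and writing $\rho_n=r_{s_n}|_{r_s(X)}$, condition $(iii)$ applied to $\langle s_n\rangle$ gives $\rho_n\to\mathrm{id}_{r_s(X)}$ pointwise, while $(ii)$ gives $\rho_m\circ\rho_n=\rho_m$ for $m\le n$. The choice of the $a_V$ forces $r_{s_n}(x)=\rho_n(r_s(x))\in\overline{\rho_n(C_n)}\subseteq\rho_n(\overline C)$ (this last set is closed, being a continuous image of a compact set), so there is $c_n\in\overline C$ with $\rho_n(c_n)=r_{s_n}(x)$. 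If $d(c,r_s(x))\ge\varepsilon$ for every $c\in\overline C$, take a cluster point $c$ of $\langle c_n\rangle$ in $\overline C$; for $n\ge m$ the relation $\rho_m\circ\rho_n=\rho_m$ gives $\rho_m(c_n)=\rho_m(r_{s_n}(x))=r_{s_m}(x)$, hence $\rho_m(c)=r_{s_m}(x)$ for every $m$, and therefore $c=\lim_m\rho_m(c)=\lim_m r_{s_m}(x)=r_s(x)$ by $(iii)$, contradicting $d(c,r_s(x))\ge\varepsilon$. So $r_s(x)\in\overline C\subseteq\overline{A\cap r_s(X)}$, proving the lemma. This verification --- that the limit index $s$ actually sees $r_s(x)$ as a limit point of $C$ --- is exactly where $(ii)$ and $(iii)$ are genuinely used.

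Granting the lemma, the second bullet is immediate: given $A\subseteq Y$ and $y\in\overline A\cap Y$, fix $s'$ with $y\in r_{s'}(X)$ and apply the lemma with prescribed index $s'$, obtaining $s\ge s'$ with $y=r_s(y)\in\overline{A\cap r_s(X)}$; since $r_s(X)$ is metrizable there is a sequence in $A\cap r_s(X)\subseteq A$ converging to $y$, so $Y$ is Fr\'echet--Urysohn.

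For the third bullet, $Y$ is dense in the compact Hausdorff $X$, so it suffices, by a standard characterization of $C^*$-embeddings, to show that any two disjoint zero-sets $Z_0,Z_1$ of $Y$ have disjoint closures in $X$. Suppose $x\in\overline{Z_0}^{\,X}\cap\overline{Z_1}^{\,X}$. If $x\in Y$ then $x\in\overline{Z_i}^{\,X}\cap Y=\overline{Z_i}^{\,Y}=Z_i$ for $i=0,1$ (zero-sets being closed), contradicting $Z_0\cap Z_1=\emptyset$. If $x\notin Y$, apply the lemma to $Z_0$ and $Z_1$ simultaneously, getting one $s\in\Gamma$ with $r_s(x)\in\overline{Z_i\cap r_s(X)}$ for $i=0,1$; since $Z_i\cap r_s(X)$ is closed in $r_s(X)$ (as $Z_i$ is closed in $Y\supseteq r_s(X)$), this gives $r_s(x)\in Z_0\cap Z_1=\emptyset$, a contradiction. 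Hence the closures are disjoint and $\beta Y=X$.
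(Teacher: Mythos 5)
The paper does not prove this theorem --- it is quoted verbatim from \cite{kubis} with no argument supplied --- so there is no internal proof to compare against; your proposal has to stand on its own, and it does. The preliminary reductions (a)--(c) are all correct: $r_s(X)$ is the fixed-point set of $r_s$, hence closed and compact metrizable; monotonicity of the images follows from $(ii)$; and the $\sigma$-completeness/up-directedness argument placing any countable subset of $Y$ inside a single $r_t(X)$ above a prescribed index immediately gives the first bullet. The extraction lemma is the right unifying device, and its delicate step --- showing $r_s(x)\in\overline C$ at the limit index --- is handled correctly: the identities $\rho_m\circ\rho_n=\rho_m$ force every cluster point $c$ of the preimages $c_n$ to satisfy $\rho_m(c)=r_{s_m}(x)$ for all $m$, and $(iii)$ then pins $c=r_s(x)$; nothing is missing there. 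The deductions of Fr\'echet--Urysohnness (via metrizability of $r_s(X)$) and of $\beta Y=X$ (via the Urysohn extension criterion applied to disjoint zero-sets, using the $k=2$ case of the lemma and the fact that $Z_i\cap r_s(X)$ is already closed in $X$) are both sound. For what it is worth, your route is close in spirit to Kubi\'s's original closing-off arguments, but packaging all three bullets as corollaries of a single multi-set extraction lemma is a clean and slightly more economical organization than the source.
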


An  $r$-skeleton of a space can be  inherited to  a  closed subspace:

\begin{theorem}[\cite{cuth1}]\label{teocuth1}
Let $X$ be a compact space with $r$-skeleton $\{r_s:s\in \Gamma\}$,  $Y\subseteq X$ be the  induced space by the $r$-skeleton and $F$ be a closed subset of $X$. If $Y\cap F$ is dense in $F$, then $F$ admits an $r$-skeleton.
\end{theorem}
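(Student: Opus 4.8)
The natural first attempt is to build the $r$-skeleton on $F$ by restricting the given retractions. The obvious candidate index set is $\Gamma_F=\{s\in\Gamma : r_s(F)\subseteq F\}$, which, since $Y\cap F$ is dense in $F$ and each $r_s$ is continuous, coincides with $\{s\in\Gamma : r_s(Y\cap F)\subseteq F\}$; this set is $\sigma$-complete because $F$ is closed and by condition $(iii)$, and if it were cofinal in $\Gamma$ we would be done, as each $r_s|_F$ would be a retraction of $F$ onto a subspace of the cosmic space $r_s(X)$, with $(i)$--$(iii)$ inherited verbatim and $(iv)$ following by a subnet argument. However, I do not see how to force $\Gamma_F$ to be cofinal directly, since the retractions need not ``respect'' $F$ in any monotone fashion, so the plan is instead to pass through the standard reformulation of retractional skeletons by rich families of countable elementary submodels.

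Concretely: embed $X$ in a cube $[0,1]^{\kappa}$; by the now-standard correspondence (see \cite{cuth1}, \cite{kubis}), after replacing $\Gamma$ by a cofinal subfamily the given $r$-skeleton may be assumed to be induced, \emph{with the same induced subspace} $Y$, by a rich family $\mathcal M$ of countable elementary submodels $M\prec(H(\theta),\in)$ containing all relevant objects, the retraction attached to $M$ being the map $P_M\colon X\to X$ that keeps the coordinates in $M\cap\kappa$ and sets the others to $0$. Tracking that $Y$ is preserved here is important, since the whole hypothesis is phrased in terms of $Y$.

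The key step is then to show that $P_M(F)\subseteq F$ whenever $F\in M$, and this is exactly where the density of $Y\cap F$ in $F$ enters. Fix $y\in F$ and a basic open neighbourhood $V$ of $P_M(y)$, determined by finitely many coordinates, and split them into those $\gamma\in M\cap\kappa$, on which $P_M(y)$ agrees with $y$, and those $\delta\notin M$, on which $P_M(y)$ is $0$. Shrinking the constraint at each $\gamma$ to a rational interval lying in $M$ that still contains $y(\gamma)$, the statement ``there is $z\in F\cap Y$ with $z(\gamma)$ in the prescribed interval for every $\gamma$'' is true — witnessed by $y$, using density of $F\cap Y$ in $F$ — and all its parameters lie in $M$, so it is witnessed by some $z\in F\cap Y\cap M$. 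Since $z\in Y$ its support is countable, and being an element of $M$ it is in fact contained in $M$; hence $z$ vanishes at every $\delta\notin M$, so $z\in V\cap F$. As $V$ was arbitrary and $F$ is closed, $P_M(y)\in F$; thus each $P_M|_F$ (for $M\in\mathcal M$ with $F\in M$) is a retraction of $F$ with metrizable separable, hence cosmic, image $P_M(F)\subseteq P_M(X)$.

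Finally I would set $\mathcal M_F=\{M\in\mathcal M : F\in M\}$, note that it is still $\sigma$-complete, up-directed and cofinal among countable elementary submodels (standard for rich families), and check that $\{P_M|_F : M\in\mathcal M_F\}$ satisfies $(i)$--$(iv)$ of Definition \ref{skeleton} on $F$. Properties $(i)$--$(iii)$ follow from the corresponding properties of $\{P_M\}$ on $X$ together with $P_M(F)\subseteq F$; for $(iv)$ it is cleanest to apply Proposition \ref{rey2} to the compact space $F$, observing that the induced subspace $\bigcup_{M\in\mathcal M_F}P_M(F)$ contains $Y\cap F$ — each point of $Y\cap F$ is fixed by $P_M$ once its countable support lies in $M$ — and is therefore dense in $F$. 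The main obstacle is the middle step: making the elementarity argument for $P_M(F)\subseteq F$ fully rigorous (the passage ``a countable set that is an element of $M$ is a subset of $M$'', and the rational shrinking of neighbourhoods) and, upstream of it, verifying carefully that the reformulation of the given abstract skeleton by a rich family of submodels can be arranged without altering the induced subspace $Y$.
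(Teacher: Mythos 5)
The paper itself does not reprove this theorem (it is quoted from \cite{cuth1}), but Remark \ref{obscuth} records the intended argument: one shows that $\Gamma'=\{s\in\Gamma: r_s(F)\subseteq F\}$ is $\sigma$-complete and cofinal in $\Gamma$ and then restricts the retractions to $F$. Your first paragraph identifies exactly this strategy and then abandons it at precisely the point that constitutes the real content of the theorem, namely the cofinality of $\Gamma'$.

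The route you substitute for it contains a genuine error. You assume that, after passing to a cofinal subfamily, the given $r$-skeleton is induced by the coordinate projections $P_M$ (keep the coordinates in $M\cap\kappa$, set the rest to $0$) attached to a rich family of elementary submodels. That is not the standard correspondence for retractional skeletons: any family of coordinate projections commutes, so a skeleton of that form exists only when $X$ is Valdivia, whereas the theorem concerns arbitrary $r$-skeletons --- for instance $[0,\omega_2]$ with the skeleton of Example \ref{resqordinal} admits no commutative skeleton, so its retractions cannot be realized as projections in any embedding into a cube. Likewise, your step ``since $z\in Y$ its support is countable, \dots\ so $z$ vanishes at every $\delta\notin M$'' presupposes $Y\subseteq\Sigma[0,1]^\kappa$, which is again the Valdivia situation; for a general $r$-skeleton the induced set $Y$ carries no such support structure. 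The correct use of elementary submodels here (as in \cite{cuth1}) keeps the \emph{abstract} retractions: to a suitable countable $M\prec H(\theta)$ containing $X$, $F$ and the skeleton one attaches $s_M=\sup(\Gamma\cap M)$ (which exists by $\sigma$-completeness, as $\Gamma\cap M$ is countable and directed) and proves, using elementarity together with the density of $Y\cap F$ in $F$ and condition $(iii)$, that $r_{s_M}(F)\subseteq F$. Since the indices $s_M$ so obtained are cofinal in $\Gamma$ and closed under countable increasing suprema, this yields the cofinality and $\sigma$-completeness of $\Gamma'$, after which your own first paragraph (restriction of the retractions, $(i)$--$(iii)$ inherited, $(iv)$ via Proposition \ref{rey2} and the density of $Y\cap F$) finishes the proof.
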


In the proof of Theorem \ref{teocuth1}, the  author described a technique  to inherit an $r$-skeleton on a compact space to a closed subspace. The following remark enounce this technique.

\begin{remark}\label{obscuth}
Let $X$ be a compact space with $r$-skeleton $\{r_s:s\in \Gamma\}$, $Y$ the induced space by the $r$-skeleton and $F$ be a closed subset of $X$. If $Y\cap F$ is dense in $F$, then the set $\Gamma'=\{s\in \Gamma:r_s(F)\subseteq F\}$ is $\sigma$-complete and cofinal in $\Gamma$, and the family  $\{r_s\upharpoonright_{F}:s\in \Gamma'\}$ is an $r$-skeleton on $F$.
\end{remark}

\medskip

The next notion was a very  useful tool in the study of Corson compact spaces in the paper \cite{casa1}:

\smallskip

Given two  up-directed $\sigma$-complete partially ordered sets $\Gamma$ and $\Gamma'$, a function $\psi:\Gamma\rightarrow \Gamma'$ is called {\it $\omega$-monotone} provided that:
\begin{itemize}
\item $\psi(s)\leq \psi(t)$ whenever $s\leq t$; and
\item if $\langle s_n\rangle_{ n<\omega}\subseteq \Gamma$ is an increasing sequence, then $\psi(\sup_{n<\omega}s_n)=\sup_{n<\omega}\psi(s_n)$.
\end{itemize}

\medskip

  The $\omega$-monotone functions allows us to change the set of indices $\Gamma$ of a full $r$-skeleton to the set of countable subsets of the induced space (see \cite{reynaldo1}):

\begin{lemma}[\cite{reynaldo1}]\label{rey1}
Let $X$ be a set and $\Gamma$ be an up-directed partially ordered set. If for $x\in X$ there is an assignment $s_x\in \Gamma$, then  there exists an $\omega$-monotone function $\psi:[X]^{\leq \omega}\rightarrow \Gamma$ such that $s_x\leq \psi(x)$, for every $x\in X$.
\end{lemma}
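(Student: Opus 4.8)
The plan is to build $\psi$ in two stages: first on finite subsets of $X$, where up-directedness of $\Gamma$ already suffices, and then on all countable subsets by passing to suprema, where the $\sigma$-completeness of $\Gamma$ (tacitly required by the very notion of an $\omega$-monotone map) is used. For the first stage I would construct, by recursion on the cardinality $|F|$, an assignment $F\mapsto t_F\in\Gamma$ defined for all $F\in[X]^{<\omega}$ and satisfying $s_x\le t_F$ for every $x\in F$ together with $t_{F'}\le t_F$ whenever $F'\subseteq F$. Begin with $t_\emptyset$ an arbitrary element of $\Gamma$; if $|F|=n\ge 1$ and $t_{F'}$ has been chosen for all $F'$ with $|F'|<n$, then the set $\{s_x:x\in F\}\cup\{t_{F'}:F'\subsetneq F\}$ is finite, so up-directedness of $\Gamma$ furnishes an upper bound for it, and we let $t_F$ be one such. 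Both required properties are then immediate from the choice.

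Next I would set $\psi(A):=\sup\{t_F:F\in[A]^{<\omega}\}$ for $A\in[X]^{\le\omega}$. Since $A$ is countable, $[A]^{<\omega}$ is countable, and the family $\{t_F:F\in[A]^{<\omega}\}$ is up-directed because $t_{F\cup F'}$ dominates both $t_F$ and $t_{F'}$; extracting an increasing cofinal $\omega$-sequence from it and invoking $\sigma$-completeness of $\Gamma$ shows the supremum exists, so $\psi$ is well defined. It then remains to verify the three conclusions. The bound $s_x\le\psi(\{x\})$ holds since $\psi(\{x\})\ge t_{\{x\}}\ge s_x$. Monotonicity of $\psi$ is clear because $A\subseteq B$ gives $[A]^{<\omega}\subseteq[B]^{<\omega}$. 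Finally, for an increasing sequence $\langle A_n\rangle_{n<\omega}$ with union $A$, one has $\psi(A_n)\le\psi(A)$ by monotonicity, while every $F\in[A]^{<\omega}$, being finite, is contained in some $A_n$, so $t_F\le\psi(A_n)\le\sup_n\psi(A_n)$ and hence $\psi(A)\le\sup_n\psi(A_n)$; this gives $\psi(\sup_n A_n)=\sup_n\psi(A_n)$, i.e. $\psi$ is $\omega$-monotone.

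The one genuinely delicate point — and the reason for running a recursion on $|F|$ rather than picking an upper bound of $\{s_x:x\in F\}$ independently for each $F$ — is the monotonicity clause $t_{F'}\le t_F$: arbitrary choices would not respect inclusions, and the passage to suprema in the second stage rests on $\{t_F:F\in[A]^{<\omega}\}$ being an up-directed countable set. Once that has been secured, the two clauses of $\omega$-monotonicity and the inequality $s_x\le\psi(x)$ follow routinely, the only external ingredient being the $\sigma$-completeness of $\Gamma$, which guarantees that countable up-directed subsets of $\Gamma$ have suprema.
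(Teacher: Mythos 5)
Your proof is correct. The paper itself gives no proof of this lemma (it is quoted from \cite{reynaldo1}), and your two-stage construction --- defining $t_F$ for finite $F$ by recursion on $|F|$ so that inclusions are respected, then setting $\psi(A)=\sup\{t_F:F\in[A]^{<\omega}\}$ using the tacit $\sigma$-completeness of $\Gamma$ --- is essentially the standard argument from that reference, with the key points (up-directedness of $\{t_F:F\in[A]^{<\omega}\}$ and the fact that every finite subset of $\bigcup_n A_n$ lies in some $A_n$) handled properly.
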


We will use the replacement of $\Gamma$ by $[X]^{\leq \omega}$ in the proof of some results of this article.

\medskip

In \cite{casa1}, the authors use a special family  of closed subsets of a Corson compact spaces to define the notion of a  $c$-skeleton.
Thus, they obtained a characterization of a Corson compact spaces in terms of a $c$-skeleton. Let us state next this notion.

\begin{definition}[\cite{casa1}]\label{cesqueletodef}
Let $(X,\tau)$ be a space and $\{(F_s,\mathcal{B}_s):s\in \Gamma\}$ be a family of  pairs  such that $\{F_s:s\in \Gamma\}$ is a family of closed subsets of $X$ and $\{\mathcal{B}_s:s\in \Gamma\}\subseteq [\tau]^{\leq\omega}$. We say that $\{(F_s,\mathcal{B}_s):s\in \Gamma\}$ is a \textit{$c$-skeleton} if the following conditions hold:
\begin{itemize}
\item $F_s\subseteq F_t$, whenever $s\leq t$;
\item for each $s\in \Gamma$, $\mathcal{B}_s$ is a base for a topology  denoted by $\tau_s$ on $X$ and there exist a Tychonoff space $Z_s$ and continuous map $g_s:(X,\tau_s)\rightarrow Z_s$ which separates the points of $F_s$, and
\item the assignment $s\rightarrow \mathcal{B}_s$ is $\omega$-monotone.
\end{itemize}
In addition, if $X=\bigcup_{s\in \Gamma}F_s$, the we say that the $c$-skeleton is \textit{full}.
\end{definition}

 One of the main results of  \cite{casa1} asserts that  a compact space is Corson iff it admits  a full  $c$-skeleton. The next  compact non-Corson space  admits a $c$-skeleton.

\begin{example}\label{cesqejemplo} The ordered space
 $[0,\omega_1]$ admits a $c$-skeleton. Indeed, let $\Gamma=\omega_1$ and $L_{\omega_1}:=\{\alpha<\omega_1: \alpha \mbox{ is not isolated point}\}$. For each $\alpha<\omega_1$, define $$\mathcal{B}_\alpha:=\{\{\gamma\}:\gamma\leq \alpha \mbox{ and} \gamma\notin L_{\omega_1}\}\cup\{(\gamma,\beta+1):\gamma<\beta\leq \alpha \mbox{ and } \beta\in L_{\omega_1} \}\cup\{(\gamma,\omega_1]: \gamma<\alpha\}.$$   To see that the family   $\{([0,\alpha], \mathcal{B}_\alpha):\alpha<\omega_1\}$ is a $c$-skeleton on $[0,\omega_1]$ is enough to define the functions $g_\alpha$'s:

 For each $\alpha<\omega_1$, let $Z_\alpha=[0,\alpha]$ and consider the map $g_\alpha:([0,\omega_1],\mathcal{B}_\alpha)\rightarrow Z_\alpha$ defined  by $g_\alpha(\beta):=$
 \[
g_\alpha(\beta) :=
\begin{cases}
\beta & \textrm{ if $\beta<\alpha$},\\
\alpha & \textrm{ otherwise}, \\
\end{cases}
\]
for each $\beta\leq\omega_1$. Observe that the function $g_\alpha$ is continuous and one-to-one map, and  that the assignment $\alpha\rightarrow \mathcal{B}_\alpha$ is $\omega$-monotone.
\end{example}

%%%%%%%%%%%%%%%%%%%%%%%%%%
\section{$r$-skeletons on zero-dimensional spaces}
%%%%%%%%%%%%%%%%%%%%%%%%%%%%%%%%%%%%%%%%%%%%%%%%%

It is very natural to ask when an $r$-skeleton $\{r_s:s\in \Gamma\}$ satisfies that $|r_s(X)| \leq \omega$ for each $s\in \Gamma$. In the next results, we give a necessary condition to have this property. In this section, we will use usual terminology of trees.    By $\{0,1\}$ we denote the discrete space with two elements. We will denote the Cantor space by $\mathcal{C}$, $\{0,1\}^{<\omega}$  is the set of all finite sequences of $\{0,1\}$; $\sigma\hat{\hspace{0.1cm}}\lambda$  will be denote the usual  concatenation   for $\sigma, \lambda\in \{0,1\}^{<\omega}$,  and an initial segment of $p\in \mathcal{C}$ of length $n$ will denoted by  $p\mid n$. For terminology not mentioned and used in the next results the reader may  consult \cite{kech}.

\begin{lemma}\label{lema2}

Let $X$ be a zero-dimensional compact space without isolated points. If $\{r_s:s\in \Gamma\}$ is an $r$-skeleton on $X$, then there is a Cantor scheme $\{U_\sigma:\sigma\in \{0,1\}^{<\omega}\}$ and $\{x_\sigma:\sigma\in \{0,1\}^{<\omega}\}\subset\bigcup_{s\in\Gamma}r_s(X)$ such that:
\begin{enumerate}
\item $U_\sigma$ is a clopen set,
\item $x_\sigma\in U_\sigma$ and for $i\in \{0,1\}$, $x_{\sigma}\notin U_{\sigma\hat{\hspace{0.1cm}}i}$.
\end{enumerate}
Moreover, $\bigcap_{n<\omega}U_{p\mid n}\neq\emptyset$, for every $p\in \mathcal{C}$.
\end{lemma}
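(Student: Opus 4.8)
The plan is to build the two families by recursion on the length of $\sigma$, drawing on only three facts: that the induced space $Y:=\bigcup_{s\in\Gamma}r_s(X)$ is dense in $X$, that $X$ is zero-dimensional, and that $X$ has no isolated points. Density of $Y$ is immediate from condition $(iv)$ of Definition \ref{skeleton}: for every $x\in X$ the net $\langle r_s(x)\rangle_{s\in\Gamma}\subseteq Y$ converges to $x$, so $\overline{Y}=X$.

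For the recursion, set $U_\emptyset:=X$ and let $x_\emptyset$ be any point of $Y$. Assume a nonempty clopen set $U_\sigma$ together with a point $x_\sigma\in U_\sigma\cap Y$ has been constructed. Since $U_\sigma$ is open in $X$ and $X$ has no isolated points, $U_\sigma$ has no isolated points either, hence it is infinite and $U_\sigma\setminus\{x_\sigma\}$ is a nonempty open set; choose two distinct points $a,b\in U_\sigma\setminus\{x_\sigma\}$. As $X$ is zero-dimensional and Hausdorff, pick a clopen set $U_{\sigma\hat{\hspace{0.1cm}}0}$ with $a\in U_{\sigma\hat{\hspace{0.1cm}}0}\subseteq U_\sigma\setminus\{x_\sigma,b\}$, and then a clopen set $U_{\sigma\hat{\hspace{0.1cm}}1}$ with $b\in U_{\sigma\hat{\hspace{0.1cm}}1}\subseteq U_\sigma\setminus(\{x_\sigma\}\cup U_{\sigma\hat{\hspace{0.1cm}}0})$. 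These two sets are clopen, nonempty, contained in $U_\sigma$, disjoint, and neither of them contains $x_\sigma$; finally, using the density of $Y$, choose $x_{\sigma\hat{\hspace{0.1cm}}i}\in U_{\sigma\hat{\hspace{0.1cm}}i}\cap Y$ for $i\in\{0,1\}$. This yields a Cantor scheme $\{U_\sigma:\sigma\in\{0,1\}^{<\omega}\}$ of clopen sets satisfying $(1)$ and $(2)$, with all the points $x_\sigma$ lying in $Y=\bigcup_{s\in\Gamma}r_s(X)$.

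For the last assertion, fix $p\in\mathcal{C}$. Then $\langle U_{p\mid n}\rangle_{n<\omega}$ is a decreasing sequence of nonempty closed subsets of the compact space $X$, so $\bigcap_{n<\omega}U_{p\mid n}\neq\emptyset$ by compactness.

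I do not expect a genuine obstacle in this argument; the only step that needs care is the recursive splitting, where one must ensure that after deleting $x_\sigma$ the clopen set $U_\sigma$ still contains at least two points, so that two disjoint clopen children avoiding $x_\sigma$ can be carved out of it. This is precisely where the hypotheses are used: ``no isolated points'' keeps every nonempty clopen piece infinite even after removing a point, and ``zero-dimensional'' makes clopen neighbourhoods available; the $r$-skeleton enters only through the density of the induced space $Y$, which lets us keep every chosen point $x_\sigma$ inside $\bigcup_{s\in\Gamma}r_s(X)$.
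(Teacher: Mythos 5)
Your proposal is correct and follows essentially the same route as the paper's proof: a recursive construction of the clopen Cantor scheme using zero-dimensionality and the absence of isolated points to split each $U_\sigma$ into two disjoint clopen children avoiding $x_\sigma$, density of the induced space $Y$ to place each $x_\sigma$ in $\bigcup_{s\in\Gamma}r_s(X)$, and compactness for the nonempty intersection along each branch. Your version merely spells out more explicitly why two distinct points other than $x_\sigma$ exist in $U_\sigma$ and how the two clopen children are carved out.
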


\begin{proof}
Let $Y$ the induced space by $\{r_s:s\in\Gamma\}$. We proceed by induction on the length of $\sigma$. Define $U_\emptyset:=X$ and choose a point $x_\emptyset\in Y$. Now, we suppose that  $U_\sigma$ and $x_\sigma$ are defined for all $\sigma\in \{0,1\}^{<\omega}$ of length $n$. Take $\sigma\in \{0,1\}^{<\omega}$ and suppose that length of $\sigma$ is $n$. Since $x_\sigma$ is not isolated, then there are $z, z'\in U_\sigma$ not equals to $x_\sigma$. Pick two disjoint clopen subsets  $U$ and $V$ which $z\in U$, $z'\in V$, $x_\sigma\notin U$ and $x_\sigma\notin V$. Let be $U_{\sigma\hat{\hspace{0.1cm}}0}:= U_\sigma\cap U$ and   $U_{\sigma\hat{\hspace{0.1cm}}1}:= U_\sigma\cap V$. Using the density of $Y$, we can choose $x_{\sigma\hat{\hspace{0.1cm}}0}\in U_{\sigma\hat{\hspace{0.1cm}}0}\cap Y$ and $x_{\sigma\hat{\hspace{0.1cm}}1}\in U_{\sigma\hat{\hspace{0.1cm}}1}\cap Y$. Hence, $\{U_\sigma:\sigma\in \{0,1\}^{<\omega}\}$ and $\{x_\sigma:\sigma\in \{0,1\}^{<\omega}\}$ are as we expected.   Now, pick $p\in \mathcal{C}$.   By construction,  $\{U_{p\mid n}:n< \omega\}$ has the finite intersection property. By compactness, it follows that $\bigcap_{n<\omega}U_{p\mid n}\neq\emptyset$.
\end{proof}

\begin{theorem}\label{isol}
Let $X$ be a zero-dimensional compact space without isolated points. If $\{r_s:s\in \Gamma\}$ is an $r$-skeleton on $X$, then there is $s\in \Gamma$  such that $r_s(X)$ is not countable.
\end{theorem}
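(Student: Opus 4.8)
The plan is to deduce the theorem from Lemma~\ref{lema2}. Once we have the Cantor scheme $\{U_\sigma:\sigma\in\{0,1\}^{<\omega}\}$ of clopen sets and the points $\{x_\sigma:\sigma\in\{0,1\}^{<\omega}\}$ lying in the induced space $Y=\bigcup_{s\in\Gamma}r_s(X)$, I would first locate a \emph{single} index $s\in\Gamma$ whose retract $r_s(X)$ contains all the $x_\sigma$, and then show that this $r_s(X)$ must already contain a Cantor set's worth of points, hence is uncountable.

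For the first step, observe that $\{x_\sigma:\sigma\in\{0,1\}^{<\omega}\}$ is a countable subset of $Y$, so for each $\sigma$ we may fix $s_\sigma\in\Gamma$ with $x_\sigma\in r_{s_\sigma}(X)$. Enumerating $\{0,1\}^{<\omega}$ and using up-directedness together with $\sigma$-completeness of $\Gamma$, I would build an increasing sequence in $\Gamma$ dominating all the $s_\sigma$ and let $s$ be its supremum. Since $s_\sigma\le s$ and, by condition~(ii) of Definition~\ref{skeleton}, $r_{s_\sigma}=r_{s_\sigma}\circ r_s=r_s\circ r_{s_\sigma}$, one obtains $r_{s_\sigma}(X)\subseteq r_s(X)$ in the usual way (a point fixed by $r_{s_\sigma}$ is fixed by $r_s$), so $x_\sigma\in r_s(X)$ for every $\sigma$. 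As $r_s(X)$ is a retract of the compact Hausdorff space $X$, it is closed, whence $\overline{\{x_\sigma:\sigma\in\{0,1\}^{<\omega}\}}\subseteq r_s(X)$.

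For the second step, fix a branch $p\in\mathcal{C}$ and consider the decreasing sequence of nonempty closed sets $F_n=\overline{\{x_{p\mid k}:k\ge n\}}$; by compactness choose $w_p\in\bigcap_{n<\omega}F_n$, a cluster point of the sequence $(x_{p\mid n})_n$. Because the scheme is nested and each $U_\sigma$ is clopen, $F_n\subseteq U_{p\mid n}$ for every $n$, so $w_p\in\bigcap_{n<\omega}U_{p\mid n}$; also $w_p\in F_0\subseteq r_s(X)$. Finally, if $p\neq q$, then at the first place where they differ the two relevant sets of the Cantor scheme are the disjoint immediate successors of a common node, hence $\bigcap_{n}U_{p\mid n}$ and $\bigcap_{n}U_{q\mid n}$ are disjoint and $w_p\neq w_q$. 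Thus $p\mapsto w_p$ is an injection of $\mathcal{C}$ into $r_s(X)$, so $r_s(X)$ is uncountable.

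The main obstacle — indeed the only step that is not bookkeeping — is the passage from the countably many witnesses $x_\sigma$ to uncountably many points inside the \emph{closed} set $r_s(X)$: one must verify that for each branch $p$ a cluster point $w_p$ of $(x_{p\mid n})_n$ genuinely lands in all of the $U_{p\mid n}$ (this is exactly where clopenness is used, since it forces $F_n\subseteq U_{p\mid n}$) and that distinct branches yield distinct such points (this is where the disjointness built into the notion of Cantor scheme is used). Everything else is the routine $\sigma$-completeness argument collapsing a countable family of indices to one, together with the standard fact that a retract of a compact Hausdorff space is closed. Note that we do not need to know that the branch intersections $\bigcap_n U_{p\mid n}$ are singletons.
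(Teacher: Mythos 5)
Your proof is correct and follows essentially the same route as the paper's: both extract, for each branch $p\in\mathcal{C}$, a cluster point of $(x_{p\mid n})_n$ lying in $\bigcap_n U_{p\mid n}$, use the disjointness of the branch intersections to get uncountably many such points in $\overline{\{x_\sigma\}}$, and then collapse the countably many indices to a single $s$ with $\overline{\{x_\sigma\}}\subseteq r_s(X)$. The only cosmetic difference is that you invoke closedness of the retract $r_s(X)$ where the paper also appeals to the countable closedness of the induced space $Y$, and you spell out the $\sigma$-completeness bookkeeping that the paper leaves implicit.
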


\begin{proof}
Let $Y$ be the induced space by $\{r_s:s\in\Gamma\}$ and we consider $\{U_\sigma:\sigma\in \{0,1\}^{<\omega}\}$ and $\{x_\sigma:\sigma \in \{0,1\}^{<\omega}\}\subset\bigcup_{s\in\Gamma}r_s(X)$ as in Lemma \ref{lema2}.
Put  $F=\{x_\sigma:\sigma\in \{0,1\}^{<\omega}\}$. Since $F\in [Y]^{\leq \omega}$ and $Y$ is countably closed, then $\overline{F}\subseteq Y$. We shall prove that $\overline{F}$ is not countable. For $p\in \mathcal{C}$, let  $V_p=\bigcap_{n<\omega}U_{p\mid n}$ and $x_p\in \overline{\{x_{p\mid n}:n<\omega\}}$.
We observe that $x_p\notin\{x_{p\mid n}:n<\omega\}$. Since $\{x_{p\mid n}:n<\omega\}\in [Y]^{\leq \omega}$ and $Y$ is countably closed, we have $x_p\in Y$ and it is easy to see that $x_p\in V_p$. Finally, if $p,q\in \mathcal{C}$ with $p\neq q$, using that $V_p\cap V_q=\emptyset$ we have $x_p\neq x_q$. Since $\{x_p:p\in \mathcal{C}\}\subseteq \overline{F}$, we conclude that $\overline{F}$ is not countable. Finally, we choose $s\in \Gamma$ such that $F\subseteq r_s(X)$.
\end{proof}

\begin{corollary}\label{corisol}
 Let $X$ be a  zero-dimensional compact space. If $\{r_s:s\in \Gamma\}$ is an $r$-skeleton on $X$ such that $|r_s(X)| \leq \omega$ for all $s\in \Gamma$, then $X$ has a dense subset consisting of isolated points.
\end{corollary}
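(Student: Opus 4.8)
The plan is to argue by contradiction and reduce everything to Theorem \ref{isol}. Let $I$ be the set of isolated points of $X$ and suppose, towards a contradiction, that $\overline{I}\neq X$. Then $X\setminus\overline{I}$ is a nonempty open subset of $X$, and since $X$ is zero-dimensional I can pick a point in it together with a clopen neighborhood $C$ of that point with $C\subseteq X\setminus\overline{I}$; in particular $C\neq\emptyset$.

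Next I would check that $C$, with the subspace topology, is a compact zero-dimensional space \emph{without} isolated points. It is closed in $X$, hence compact; it is a subspace of a zero-dimensional space, hence zero-dimensional; and if some $c\in C$ were isolated in $C$, then $\{c\}$ would be open in $C$ and thus open in $X$ (because $C$ is open in $X$), so $c\in I\subseteq\overline{I}$, contradicting $C\cap\overline{I}=\emptyset$.

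The crucial step is to transfer the $r$-skeleton from $X$ to $C$ while keeping control of the cardinalities of the ranges. Let $Y=\bigcup_{s\in\Gamma}r_s(X)$ be the induced space. By condition $(iv)$ of Definition \ref{skeleton}, every $x\in X$ is the limit of the points $r_s(x)\in Y$, so $Y$ is dense in $X$; since $C$ is open, $Y\cap C$ is dense in $C$. Applying Remark \ref{obscuth} to the closed set $C$, the family $\{r_s\upharpoonright_{C}:s\in\Gamma'\}$, where $\Gamma'=\{s\in\Gamma:r_s(C)\subseteq C\}$, is an $r$-skeleton on $C$. Moreover, for each $s\in\Gamma'$ we have $r_s\upharpoonright_{C}(C)=r_s(C)\subseteq r_s(X)$, which is countable by hypothesis. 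Thus $C$ is a zero-dimensional compact space without isolated points carrying an $r$-skeleton all of whose retractions have countable range, contradicting Theorem \ref{isol}. Hence $\overline{I}=X$, i.e.\ $X$ has a dense set of isolated points.

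I do not anticipate a genuine obstacle: the only points needing care are peeling off a clopen set via zero-dimensionality, verifying that ``no isolated points'' is inherited by the clopen subset, and noting that the inherited $r$-skeleton preserves the countability of the ranges. The one piece of bookkeeping worth isolating explicitly is the density of the induced space $Y$ in $X$, since that is exactly what is needed to invoke Remark \ref{obscuth} for $C$.
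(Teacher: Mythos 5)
Your argument is correct and follows essentially the same route as the paper: both reduce to Theorem \ref{isol} by restricting the $r$-skeleton (via Remark \ref{obscuth}) to a closed subset without isolated points on which the induced space is dense, and then noting the restricted retractions still have countable ranges. The only cosmetic difference is that you take a clopen set $C$ inside $X\setminus\overline{I}$, whereas the paper takes $\overline{W}$ for an open $W$ with $\overline{W}\subseteq U$; your choice makes the verification that the subspace has no isolated points slightly more immediate.
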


\begin{proof}
Let $Y$ be the induced space by $\{r_s:s\in\Gamma\}$ and we suppose that $U\subseteq X$ is an open subset without isolated points. Take a nonempty open subset $W$ of $X$ such that $\overline{W}\subseteq U$. We have that $Y\cap \overline{W}$ is dense in $\overline{W}$. By Theorem \ref{teocuth1}, $\overline{W}$ admits an $r$-skeleton. Moreover, from Remark \ref{obscuth}, the $r$-skeleton is a subfamily of restrictions on $\overline{W}$ of the family $\{r_s:s\in \Gamma\}$. Using Proposition \ref{isol}, we obtain a contradiction.
\end{proof}

\begin{example}
Let $\alpha$ be a cardinal number with $\alpha \geq \omega_1$. The space $[0,\alpha]$ with the $r$-skeleton given in \ref{resqordinal}, is an example that satisfies  the conditions of  Corollary \ref{corisol}. Besides, Somaglia  proved in \cite{soma1} that  the  Alexandroff duplicate of $[0,\omega_2]$ also admits such $r$-skeleton.
\end{example}

\begin{example}
In the paper \cite{reynaldo1}, the authors  proved that the Alexandroff duplicate of a Corson space its again a Corson space, they extended the full $r$-skeleton of a Corson space to a full $r$-skeleton on its Alexandroff Duplicate. From the proof of this result  we deduce that  there are Corson spaces  $X$ whose  Alexandroff duplicate  contains a dense set of isolated points and admits an $r$-skeleton, but it could  fail  that $|r_s(AD(X))| \leq \omega$ for every $s\in \Gamma$, this shows that condition of Corollary \ref{corisol} is not sufficient.
\end{example}

\begin{question}
Which compact spaces with a dense set of isolated points  admit an $r$-skeleton whose retraction have countable images?
\end{question}

%%%%%%%%%%%%%%%%%%%%%%%%%%%%

\section{$r$-skeletons and the Alexandroff Duplicate}

%%%%%%%%%%%%%%%%%%%%%%%%%%%%%%%

In this section, we study some topological  properties of the Alexandroff duplicate of a compact space with an $r$-skeleton. In particular, we give conditions to extend an $r$-skeleton on $X$ to an $r$-skeleton on  the Alexandroff duplicate $AD(X)$. To have this done we state some preliminary results.

\medskip
Remember that the \textit{Alexandroff duplicate} of a space $X$, denoted by $AD(X)$,  is  the space $X \times \{0,1\}$ with the topology in which all points of $X \times \{1\}$ are isolated, and the basic neighborhoods of the points $(x,0)$ are of the form $(U \times \{0,1\}) \setminus \{(x,1)\}$ where $U$ is a neighborhood of $x\in X$. We denote by $X_i$ the subspace $X\times \{i\}$. We denote by $\pi$ the projection from $AD(X)$ onto $X$. Remember that $X$ is homeomorphic to $X_0$ which, in some cases, will be identified with $X$.

\medskip

The next result appears implicitly in the development of several articles on the subject, here we provide a proof of it.
\begin{theorem}\label{numerables}
Let $X$ be a compact space, $\{r_s:s\in \Gamma\}$ be an $r$-skeleton on $X$ and $Y$ be the induced space. Then there is an $r$-skeleton $\{R_A: A\in [Y]^{\leq \omega}\}$ on $X$ such that
\begin{itemize}
\item $A\subseteq R_A(X)$, for all $A\in [Y]^{\leq \omega}$, and
\item $Y=\bigcup_{A\in [Y]^{\leq \omega}} R_A(X)$.
\end{itemize}
\end{theorem}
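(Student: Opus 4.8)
The plan is to use Lemma~\ref{rey1} to re-index the given $r$-skeleton by countable subsets of $Y$, and then verify that the resulting family is again an $r$-skeleton with the two extra properties. First I would observe that, for each $y\in Y$, there is some $s_y\in\Gamma$ with $y\in r_{s_y}(X)$; this is just the definition of the induced space $Y=\bigcup_{s\in\Gamma}r_s(X)$. Applying Lemma~\ref{rey1} to the assignment $y\mapsto s_y$ (with the underlying set being $Y$ and the poset being $\Gamma$) yields an $\omega$-monotone function $\psi:[Y]^{\leq\omega}\to\Gamma$ such that $s_y\leq\psi(A)$ whenever $y\in A\in[Y]^{\leq\omega}$. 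Then I would define $R_A:=r_{\psi(A)}$ for each $A\in[Y]^{\leq\omega}$, where $[Y]^{\leq\omega}$ is ordered by inclusion. Note $[Y]^{\leq\omega}$ is up-directed (the union of two countable sets is countable) and $\sigma$-complete (a countable union of countable sets is countable).

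Next I would check the four conditions of Definition~\ref{skeleton} for $\{R_A:A\in[Y]^{\leq\omega}\}$. Condition $(i)$ is immediate since $R_A(X)=r_{\psi(A)}(X)$ is cosmic. For $(ii)$, if $A\subseteq B$ then $\psi(A)\leq\psi(B)$ by monotonicity of $\psi$, so $R_A=r_{\psi(A)}=r_{\psi(A)}\circ r_{\psi(B)}=r_{\psi(B)}\circ r_{\psi(A)}=R_A\circ R_B=R_B\circ R_A$ by condition $(ii)$ for the original skeleton. For $(iii)$, let $\langle A_n\rangle_{n<\omega}$ be increasing in $[Y]^{\leq\omega}$ with $A=\sup_n A_n=\bigcup_n A_n$; since $\psi$ is $\omega$-monotone, $\psi(A)=\sup_n\psi(A_n)$, and $\langle\psi(A_n)\rangle_{n<\omega}$ is an increasing sequence in $\Gamma$ (possibly not strictly, but one can pass to the $\Gamma$-supremum directly or note that condition $(iii)$ of the original skeleton, together with the fact that a sequence eventually constant or genuinely increasing still satisfies the limit equation, gives $R_A(x)=r_{\psi(A)}(x)=\lim_n r_{\psi(A_n)}(x)=\lim_n R_{A_n}(x)$). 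For $(iv)$, fix $x\in X$ and a neighborhood $V$ of $x$; by condition $(iv)$ for $\{r_s\}$ there is $s_0\in\Gamma$ with $r_s(x)\in V$ for all $s\geq s_0$. Using condition $(iv)$ again (or just the definition of $Y$ being dense, via $\beta Y=X$), pick a countable $A_0\subseteq Y$ with $\psi(A_0)\geq s_0$ — concretely, since $r_{s_0}(X)$ is cosmic hence separable, and $Y$ is dense, one can find a countable $A_0\subseteq Y$ whose associated $s_y$'s are cofinal enough; alternatively, more cleanly: for any $A\supseteq A_0$ we have $\psi(A)\geq\psi(A_0)\geq s_0$, so $R_A(x)=r_{\psi(A)}(x)\in V$, giving $x=\lim_{A}R_A(x)$. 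The subtle point here is ensuring the existence of a single countable $A_0$ with $\psi(A_0)\geq s_0$; the safest route is to invoke Proposition~\ref{rey2}: since $\{R_A\}$ satisfies $(i)$–$(iii)$ and $x\in X=\overline{Y}$ (as $Y$ is dense in $X$ because $\beta Y=X$), condition $(iv)$ follows automatically provided the induced space of $\{R_A\}$ is exactly $Y$.

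So the last thing to verify — and what I expect to be the main obstacle — is that $\bigcup_{A\in[Y]^{\leq\omega}}R_A(X)=Y$ and that $A\subseteq R_A(X)$ for all $A$. The inclusion $A\subseteq R_A(X)$ is the heart of the matter: for $y\in A$ we have $s_y\leq\psi(A)$, hence by condition $(ii)$ of the original skeleton $r_{s_y}=r_{s_y}\circ r_{\psi(A)}$, so $y=r_{s_y}(y)=r_{s_y}(r_{\psi(A)}(y))\in r_{\psi(A)}(X)=R_A(X)$; here I used $y\in r_{s_y}(X)$, i.e.\ $r_{s_y}(y)=y$. This immediately gives $Y=\bigcup_y\{y\}\subseteq\bigcup_A R_A(X)$, and the reverse inclusion $\bigcup_A R_A(X)\subseteq\bigcup_{s\in\Gamma}r_s(X)=Y$ is clear since each $R_A=r_{\psi(A)}$. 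With the induced space of $\{R_A\}$ identified as $Y$, Proposition~\ref{rey2} delivers condition $(iv)$, completing the proof; one should double-check that the mild degeneracy of $\omega$-monotone images of strictly increasing sequences (they need not be strictly increasing in $\Gamma$) causes no problem in condition $(iii)$, which it does not since the limit relation is preserved under passing to cofinal subsequences and under eventually-constant sequences.
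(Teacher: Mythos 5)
Your proposal is correct and follows essentially the same route as the paper's proof: re-index via Lemma~\ref{rey1}, set $R_A:=r_{\psi(A)}$, check conditions $(i)$--$(iii)$ directly, deduce $A\subseteq R_A(X)$ from the choice of the $s_y$'s together with condition $(ii)$ of the original skeleton, and obtain $(iv)$ from Proposition~\ref{rey2} and the density of $Y$. The only cosmetic point is that in the chain $y=r_{s_y}(r_{\psi(A)}(y))\in r_{\psi(A)}(X)$ the cleaner composition to quote is $r_{s_y}=r_{\psi(A)}\circ r_{s_y}$, which gives $y=r_{\psi(A)}(r_{s_y}(y))\in r_{\psi(A)}(X)$ directly.
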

\begin{proof}
For each $x\in Y$, we pick $s_x\in \Gamma$ such that $r_{s_x}(x)=x$.  We consider the $\omega$-monotone function $\psi:[Y]^{\leq \omega}\rightarrow \Gamma$  given by Lemma \ref{rey1}.
Now, for each  $A\in [Y]^{\leq \omega}$ we define $R_A:X\rightarrow X$ by $R_A:=r_{\psi(A)}$.
 We shall prove that the family $\{R_A:A\in [Y]^{\leq \omega}\}$ is an $r$-skeleton on $X$.  Since   $\psi$  is $\omega$-monotone,  $s_x\leq\psi(x)$, for every $x\in Y$, and $\{r_s:s\in \Gamma\}$ is an $r$-skeleton. Trivially, we have that  $\{R_A:A\in [Y]^{\leq \omega}\}$ satisfies $(i)-(iii)$ of the $r$-skeleton definition.
 Finally, by Lemma \ref{rey2},   we have that $x=\lim_{A\in [Y]^{\leq \omega}}R_A(x)$ for all $x\in \overline{\bigcup_{A\in [Y]^{\leq \omega} }R_A(X)}$. By the choice of the family $\{s_x:x\in Y\}$, we  obtain that $A\subseteq R_A(X)$, for all $A\in [Y]^{\leq \omega}$. It then follows that $\bigcup_{A\in [Y]^{\leq \omega}}R_A(X)=Y$. Therefore, $\{R_A: A\in [Y]^{\leq \omega}\}$ is an $r$-skeleton on $X$.
\end{proof}
%%%%%%%%%%%%%%%%%%%%%%%%%%%%% Circulo de alezandroff
The next Lemma is a necessary condition  when  the Alexandroff duplicate admits an $r$-skeleton.
\begin{lemma}\label{teo1}
Let $X$ be a compact space, suppose that $AD(X)$ has an $r$-skeleton  with induced space $\hat{Y}$ and  $Y=\pi(\hat{Y}\cap X_0)$. Then for any $B\in [X\setminus Y]^{\leq \omega}$ we have that
\begin{itemize}
\item[$(*)$]   $cl_X(B)\setminus B\subseteq Y$ and
\item[$(**)$]   $cl_X(B)\setminus B$  is a cosmic space.
\end{itemize}
\end{lemma}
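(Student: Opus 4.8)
The plan is to derive both $(*)$ and $(**)$ from one observation about $AD(X)$, applied to the countable set $\widehat{B}:=B\times\{1\}$, which lies in the ``isolated layer'' $X_1$. First I would record two elementary facts. For any space $Z$ with an $r$-skeleton $\{\rho_s:s\in\Gamma\}$, every isolated point $p$ of $Z$ lies in the induced space: by condition $(iv)$ of Definition \ref{skeleton} we have $p=\lim_s\rho_s(p)$, and since $\{p\}$ is open there is $s$ with $\rho_s(p)=p$, i.e. $p\in\rho_s(Z)$. As all points of $X_1$ are isolated in $AD(X)$, this gives $X_1\subseteq\hat{Y}$, hence $\widehat{B}\in[\hat{Y}]^{\leq\omega}$. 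Second, I claim that
\[
(cl_X(B)\setminus B)\times\{0\}\ \subseteq\ cl_{AD(X)}(\widehat{B}).
\]
Indeed, let $x\in cl_X(B)\setminus B$ and let $(U\times\{0,1\})\setminus\{(x,1)\}$ be a basic neighbourhood of $(x,0)$, with $U$ open in $X$ and $x\in U$; since $x\in cl_X(B)$ there is $b\in B\cap U$, and $b\neq x$ because $x\notin B$, so $(b,1)\in\widehat{B}$ lies in the chosen neighbourhood. Thus $(x,0)\in cl_{AD(X)}(\widehat{B})$.

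To obtain $(*)$, I would use that the induced space of an $r$-skeleton on a compact space is countably closed (the theorem of \cite{kubis} recalled in Section 2). Since $\widehat{B}\in[\hat{Y}]^{\leq\omega}$, this gives $cl_{AD(X)}(\widehat{B})\subseteq\hat{Y}$, so by the claim $(cl_X(B)\setminus B)\times\{0\}\subseteq\hat{Y}\cap X_0$, and applying the projection $\pi$ yields $cl_X(B)\setminus B\subseteq\pi(\hat{Y}\cap X_0)=Y$.

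For $(**)$, I would apply Theorem \ref{numerables} to $AD(X)$ and its $r$-skeleton to obtain an $r$-skeleton $\{R_A:A\in[\hat{Y}]^{\leq\omega}\}$ on $AD(X)$ with $A\subseteq R_A(AD(X))$ for every $A$. Taking $A=\widehat{B}$, the set $R_{\widehat{B}}(AD(X))$ is cosmic by condition $(i)$ of the $r$-skeleton definition and closed in $AD(X)$, being the set of fixed points of a retraction of a compact Hausdorff space; hence it contains $cl_{AD(X)}(\widehat{B})$ and therefore, by the claim, $(cl_X(B)\setminus B)\times\{0\}$. Since a subspace of a cosmic space is cosmic and $X_0$ with the topology inherited from $AD(X)$ is homeomorphic to $X$, it follows that $cl_X(B)\setminus B$ is cosmic (this also re-proves $(*)$, since $R_{\widehat{B}}(AD(X))\subseteq\hat{Y}$). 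The computations are routine; the only points needing a little care are the description of $cl_{AD(X)}(\widehat{B})$ from the basic neighbourhoods of $X_0$ and the remark that the subspace topology on $(cl_X(B)\setminus B)\times\{0\}$ coming from $AD(X)$ agrees with the one coming from $X$, and I do not anticipate a genuine obstacle, as everything rests on the structural facts about induced spaces recalled in Section 2 together with Theorem \ref{numerables}.
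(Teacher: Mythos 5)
Your proposal is correct and follows essentially the same route as the paper: reduce to the countable subset $B\times\{1\}$ of the isolated layer, use that it lies in $\hat Y$ and that $\hat Y$ is countably closed to get $(*)$, and use the $r$-skeleton reindexed by $[\hat Y]^{\leq\omega}$ (Theorem \ref{numerables}) so that $cl_{AD(X)}(B\times\{1\})$, and hence $(cl_X(B)\setminus B)\times\{0\}$, sits inside the cosmic set $R_{B\times\{1\}}(AD(X))$ to get $(**)$. You in fact supply several details the paper leaves implicit (why $X_1\subseteq\hat Y$, why $(cl_X(B)\setminus B)\times\{0\}\subseteq cl_{AD(X)}(B\times\{1\})$, and why the retract is closed), so no gaps remain.
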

\begin{proof}
By  Theorem \ref{teo1}, we may consider an $r$-skeleton on $AD(X)$ of the form $\{r_C:C\in [\hat{Y}]^{\leq \omega}\}$. Let $B\in [X\setminus Y]^{\leq \omega}$. If $B$ is a finite set, then  the result follows immediately. Let us suppose that $|B|=\omega$ and $p\in cl_X(B)\setminus B$. Observe that $(p,0)\in cl_{AD(X)}(B\times\{1\})$. Since $B\times\{1\}\subseteq\hat{Y}$ and $\hat{Y}$ is countably closed, it follows that $(p,0)\in \hat{Y}$ and so $p\in Y$. Hence, $cl_X(B)\setminus B\subseteq Y$ and  then we deduce that $B$ is discrete in $X\setminus Y$.  Now, we note that $(cl_X(B)\setminus B)\times \{0\}\subseteq cl_{AD(X)}(B\times\{1\})$. Since  $B\times \{1\}\in [\hat{Y}]^{\leq \omega}$, we have that $cl_{AD(X)}(B\times\{1\})\subseteq r_{B\times \{1\}}(X)$. Since $r_{B\times \{1\}}(X)$ is cosmic, we conclude that $cl_X(B)\setminus B\subseteq Y$ is also cosmic.
\end{proof}

The next lemma is the key to extend a retraction from the base space to its Alexandroff duplicated.

\begin{lemma}\label{lemma51}
Let $X$ be a compact space which admits an $r$-skeleton  $\{r_s:s\in \Gamma\}$ with induced space $Y$. Suppose that   for   $B\in [X\setminus Y]^{\leq \omega}$ the conditions $(*)$ and $(**)$ from above hold. Then for each $s\in \Gamma$ such that $cl_X(B)\setminus B\subseteq r_s(X)$ and for every $A\in [r_s(X)]^{\leq\omega}$, the mapping  $R_{(A,B,s)}:AD(X)\rightarrow AD(X)$ defined as
\[
R_{(A,B,s)}(x,i) :=
\begin{cases}
(x,1) & \textrm{ if $x \in A\cup B$ and $i=1$},\\
(r_{s}(x),0) & \textrm{ otherwise}, \\
\end{cases}
\]
 for every $(x,i)\in AD(X)$, is a retraction on $AD(X)$.
\end{lemma}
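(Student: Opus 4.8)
The plan is to verify the two defining conditions of a retraction: that $R:=R_{(A,B,s)}$ is idempotent and that it is continuous; that $R$ restricts to the identity on its image then follows from idempotency. Idempotency is read off the definition. If $x\in A\cup B$ and $i=1$, then $R(x,i)=(x,1)$, to which $R$ again assigns itself. Otherwise $R(x,i)=(r_s(x),0)$, whose second coordinate is $0$, so it falls under the ``otherwise'' clause and $R(r_s(x),0)=(r_s(r_s(x)),0)=(r_s(x),0)$ since $r_s\circ r_s=r_s$. Hence $R\circ R=R$.

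For continuity, let $\hat r_s\colon AD(X)\to AD(X)$ be the map $(x,i)\mapsto(r_s(x),0)$; it is continuous, being the composition of the projection $\pi\colon AD(X)\to X$, the retraction $r_s$, and the embedding $X\cong X_0\hookrightarrow AD(X)$. Put $N:=A\cup B$. The map $R$ coincides with $\hat r_s$ on $AD(X)\setminus(N\times\{1\})$ and is trivially continuous on the open discrete set $N\times\{1\}$; since every point of $X_1$ is isolated, it suffices to check continuity at an arbitrary point $(x,0)$. It is convenient to record first that $cl_X(A)\subseteq r_s(X)$ (because $r_s(X)$ is compact, hence closed in $X$, and contains $A$) and $cl_X(B)\subseteq B\cup r_s(X)$ (because $cl_X(B)\setminus B\subseteq r_s(X)$ by the choice of $s$).

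The verification at $(x,0)$ splits into two cases. If $x\in r_s(X)$, then $R(x,0)=(x,0)$; given a basic neighbourhood $(U\times\{0,1\})\setminus\{(x,1)\}$ of it, the set $W:=\big((U\cap r_s^{-1}(U))\times\{0,1\}\big)\setminus\{(x,1)\}$ is a neighbourhood of $(x,0)$ with $R(W)$ contained in it: a point of $W$ lying in $N\times\{1\}$ is fixed by $R$ and stays in $(U\setminus\{x\})\times\{1\}$, and every other point of $W$ is sent into $U\times\{0\}$. If $x\notin r_s(X)$, I would first deduce from the two inclusions above, together with condition $(*)$ applied to the countable set $B\setminus\{x\}\subseteq X\setminus Y$, that $x\notin cl_X(N\setminus\{x\})$; hence there is an open $V\ni x$ with $V\cap N\subseteq\{x\}$, and for a basic neighbourhood $(U\times\{0,1\})\setminus\{(r_s(x),1)\}$ of $R(x,0)=(r_s(x),0)$ the set $W:=\big((V\cap r_s^{-1}(U))\times\{0,1\}\big)\setminus\{(x,1)\}$ is a neighbourhood of $(x,0)$ disjoint from $N\times\{1\}$, so $R(W)=\hat r_s(W)\subseteq U\times\{0\}$. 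Thus $R$ is continuous at $(x,0)$ in both cases, and $R$ is a continuous idempotent self-map of $AD(X)$, hence a retraction.

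The one delicate point is the second case: one must guarantee that the isolated points $(y,1)$ with $y\in N$ sitting in small neighbourhoods of $(x,0)$ do not escape the target neighbourhood, which is precisely where the choice of $s$ (making $cl_X(B)\setminus B\subseteq r_s(X)$) and condition $(*)$ enter. Note that condition $(**)$ is not needed for the present statement; it will be used afterwards to see that the images $R_{(A,B,s)}(AD(X))$ are cosmic.
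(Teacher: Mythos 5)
Your proof is correct and follows essentially the same route as the paper's: idempotency is read off from $r_s\circ r_s=r_s$, and continuity only needs checking at points of $X_0$, where it is settled by using $cl_X(A)\subseteq r_s(X)$, $cl_X(B)\setminus B\subseteq r_s(X)$ and condition $(*)$ (applied to $B\setminus\{x\}$) to control the isolated points of $(A\cup B)\times\{1\}$ accumulating at $(x,0)$. The paper phrases this with convergent nets rather than explicit basic neighbourhoods, and, as you observe, it likewise makes no real use of $(**)$ in this lemma.
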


\begin{proof}
Let $A\in [r_s(X)]^{\leq\omega}$ and assume that $cl_X(B)\setminus B\subseteq r_s(X)$. First, we prove that   $R_{(A,B,s)}$ is  a continuous function. Let $\langle(x_\lambda,i_\lambda) \rangle_{\lambda\in \Lambda} $ be a net such that $(x_\lambda,i_\lambda)\rightarrow (x,i)$. If $i=1$, then $\langle (x_\lambda,i_\lambda) \rangle_{\lambda\in \Lambda} $ is eventually constant and hence  $\langle R_{(A,B)}(x_\lambda,i_\lambda)\rangle_{\lambda\in \Lambda} $  is so.  Now, we consider the case when $i=0$ and assume that $\langle(x_\lambda,i_\lambda) \rangle_{\lambda\in \Lambda} $ is not trivial. Hence,  $R_{(A,B,s)}(x,0)=(r_s(x),0)$. We may suppose that either $\langle (x_\lambda,i_\lambda) \rangle_{\lambda\in \Lambda}\subseteq X_0$ or $\langle (x_\lambda,i_\lambda) \rangle_{\lambda\in \Lambda}\subseteq X_1$. First, we consider the case when $\langle (x_\lambda,i_\lambda) \rangle_{\lambda\in \Lambda}\subseteq X_0$, we have that  $R_{(A,B,s)}(x_\lambda,i_\lambda)=(r_s(x_\lambda),0)$ for all $\lambda \in \Lambda$, and since $r_s$ continuous, we obtain that $ R_{(A,B,s)}(x_\lambda,i_\lambda)\rightarrow R_{(A,B,s)}(x,0)$. Now,  assume that $\langle (x_\lambda,i_\lambda) \rangle_{\lambda\in \Lambda}\subseteq X_1$. If $\langle x_\lambda \rangle_{\lambda\in \Lambda}$ contains a subnet that lies eventually  in $A\cup B$, then $x\in cl_X(A\cup B)$.  Without loss of generality, we suppose that $x_\lambda \in A \cup B$ for every $\lambda \in \Lambda$. Then, $R_{(A,B,s)}(x_\lambda,i_\lambda)=(x_\lambda,1)$ for all $\lambda\in \Lambda$.  If $x\in B$,  then we deduce from  $(*)$ that $\langle x_\lambda\rangle_{\lambda \in \Lambda}$ is eventually in $A$. Since $A\subseteq r_s(X)$, we have that $x\in r_s(X)$.  As $Y\cap B=\emptyset$, $x\in cl_X(A)\cup \big( cl_X(B)\setminus B\big)$. From $A\cup \big( cl_X(B)\setminus B\big) \subseteq r_s(X)$ we deduce that $r_s(x)=x$ and we conclude  that $R_{(A,B,s)}(x_\lambda,i_\lambda)\rightarrow R_{(A,B,s)}(x,0)$. In other hand, $\langle x_\lambda \rangle_{\lambda\in \Lambda}\subseteq X\setminus \big(A\cup B\big)$ and $R_{(A,B,s)}(x_\lambda,i_\lambda) = (r_{s}(x_\lambda),0)$. Using the continuity of $r_{s}$, we have that $R_{(A,B,s)} (x_\lambda,i_\lambda)=(r_s(x_\lambda),0)\rightarrow  (r_s(x),0)=R_{(A,B,s)} (x,0)$. Therefore, $R_{(A,B,s)}$ is  a continuous function. Finally, if $(x,i)\in AD(X)$, then
\begin{align*}
R_{(A,B,s)}\circ R_{(A,B,s)}(x,i) &=
\begin{cases}
R_{(A,B,s)}(x,1) & \textrm{ if $x \in A\cup B$ and $i=1$},\\
R_{(A,B,s)}(r_{s}(x),0) & \textrm{ otherwise.} \\
\end{cases}\\
&=
\begin{cases}
(x,1) & \textrm{ if $x \in A\cup B$ and $i=1$},\\
(r_{s}(r_{s}(x)),0) & \textrm{ otherweise.} \\
\end{cases}\\
&=
\begin{cases}
(x,1) & \textrm{ if $x \in A\cup B$ and $i=1$},\\
(r_{s}(x),0) & \textrm{ otherwise.} \\
\end{cases}\\
&=R_{(A,B,s)}(x,i).
\end{align*}
That is, $R_{(A,B,s)}$ is a retract on $AD(X)$.
\end{proof}

\medskip

The main theorem of  this article is the following.

\medskip

 In  what follows, we will consider    $\sigma$-complete up-directed partially ordered  sets $\Gamma$, where $\Gamma$ will be a subset of  $[Y]^{\leq \omega}\times[X\setminus Y]^{\leq \omega}$ with the order $\preceq$ defined by $(A,B)\preceq (A',B')$ if $A\subseteq A'$ and $B\subseteq B'$. Also, we remark that in this partially order set  $\Gamma$  if $\langle(A_n,B_n)\rangle_{ n<\omega}\subseteq \Gamma$, then $\sup_\Gamma\langle(A_n,B_n)\rangle_{ n<\omega}$ is not necessarily $(\bigcup_{n<\omega}A_n,\bigcup_{n<\omega}B_n)$.

\begin{theorem}\label{principal}
Let   $X$ be a compact space.
$AD(X)$ admits an $r$-skeleton if and only if  there is an $r$-skeleton $\{r_{(A,B)}:(A,B)\in \Gamma\}$ on $X$ with induced space $Y$ such that $\Gamma\subseteq [Y]^{\leq \omega}\times [X\setminus Y]^{\leq \omega}$  is $\sigma$-complete and cofinal in $[Y]^{\leq \omega}\times [X\setminus Y]^{\leq \omega}$, and  the next conditions hold:

\noindent  For every $B\in [X\setminus Y]^{\leq \omega}$,
\begin{itemize}
\item[$(*)$]  $cl_X(B)\setminus B\subseteq Y$,
\item[$(**)$]  $cl_X(B)\setminus B$ is cosmic;
\end{itemize}
 and
\begin{itemize}
\item[$(***)$]  $A\subseteq r_{(A,B)}(X)$ and  $cl_X(B)\setminus B\subseteq r_{(A,B)}(X)$ for every $(A,B)\in \Gamma$.
\end{itemize}
\end{theorem}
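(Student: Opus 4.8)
The plan is to prove the two implications separately, using the machinery already assembled in Lemmas \ref{teo1} and \ref{lemma51} together with Theorem \ref{numerables}.

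For the forward direction, assume $AD(X)$ has an $r$-skeleton with induced space $\hat Y$, and set $Y=\pi(\hat Y\cap X_0)$. First I would observe that $\{r_s\upharpoonright_{X_0}:s\in\Gamma\}$, suitably cofinalized, gives an $r$-skeleton on $X_0\cong X$ with induced space $Y$: since $X_0$ is closed in $AD(X)$ and $\hat Y\cap X_0$ is dense in it (here one uses that $\hat Y$ is countably closed and that every point of $X_0$ is a limit of points of $X_1$, so limits of the induced space sitting in $X_1$ land in $X_0$), Remark \ref{obscuth} applies and yields an $r$-skeleton on $X$ whose induced space is exactly $Y$. Lemma \ref{teo1} then gives conditions $(*)$ and $(**)$ directly. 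It remains to arrange the index set to be a cofinal $\sigma$-complete subset of $[Y]^{\leq\omega}\times[X\setminus Y]^{\leq\omega}$ satisfying $(***)$. For this I would pass, via Theorem \ref{numerables}, to an $r$-skeleton $\{R_C:C\in[Y]^{\leq\omega}\}$ on $X$ with $C\subseteq R_C(X)$; then for a pair $(A,B)\in[Y]^{\leq\omega}\times[X\setminus Y]^{\leq\omega}$ I would choose (using $(*)$, $(**)$ and Lemma \ref{rey1}) an index $C(A,B)\in[Y]^{\leq\omega}$ with $A\cup(cl_X(B)\setminus B)\subseteq R_{C(A,B)}(X)$, done $\omega$-monotonically, and define $r_{(A,B)}:=R_{C(A,B)}$. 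One checks that $\Gamma:=[Y]^{\leq\omega}\times[X\setminus Y]^{\leq\omega}$ with this assignment is a $\sigma$-complete up-directed poset (here the remark preceding the theorem, that sups need not be coordinatewise unions, is harmless because the $\omega$-monotone bookkeeping absorbs it) and that $(***)$ holds.

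For the converse, assume we are given such an $r$-skeleton $\{r_{(A,B)}:(A,B)\in\Gamma\}$ on $X$ with $(*)$, $(**)$, $(***)$. For each $(A,B)\in\Gamma$ I would define $R_{(A,B)}:AD(X)\to AD(X)$ by the formula of Lemma \ref{lemma51}, namely $R_{(A,B)}(x,1)=(x,1)$ if $x\in A\cup B$ and $R_{(A,B)}(x,i)=(r_{(A,B)}(x),0)$ otherwise; since $(***)$ guarantees $A\subseteq r_{(A,B)}(X)$ and $cl_X(B)\setminus B\subseteq r_{(A,B)}(X)$, Lemma \ref{lemma51} tells us each $R_{(A,B)}$ is a retraction on $AD(X)$. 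I would then verify the four axioms of Definition \ref{skeleton} for $\{R_{(A,B)}:(A,B)\in\Gamma\}$. Axiom $(i)$: $R_{(A,B)}(AD(X))=\big((A\cup B)\times\{1\}\big)\cup\big(r_{(A,B)}(X)\times\{0\}\big)$, which is a countable set on top of a cosmic space, hence cosmic. Axiom $(ii)$ is a direct computation from the two defining cases and the corresponding property of $r_{(A,B)}$, using $A\subseteq A'$, $B\subseteq B'$ and $r_{(A,B)}=r_{(A,B)}\circ r_{(A',B')}=r_{(A',B')}\circ r_{(A,B)}$. Axiom $(iii)$ for an increasing sequence $\langle(A_n,B_n)\rangle_{n<\omega}$ with sup $(A,B)$: on $X_1$-points one checks that $x\in A\cup B$ iff $x\in A_n\cup B_n$ for some $n$ (this needs that $\Gamma$ is $\sigma$-complete and cofinal, so that the sup behaves correctly with respect to membership after the countable stage), and on the remaining points it follows from axiom $(iii)$ for $\{r_{(A,B)}\}$ and continuity of the projection. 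Axiom $(iv)$ I would deduce from Proposition \ref{rey2}: since $\{R_{(A,B)}\}$ satisfies $(i)$–$(iii)$ and $AD(X)$ is compact, it suffices that the induced space be dense, and indeed its closure contains all of $X_1$ (every $(x,1)$ is isolated and equals $R_{(\{x\},\emptyset)}$-image or $R_{(\emptyset,\{x\})}$-image depending on whether $x\in Y$) and is dense in $X_0$ because $Y$ is, so its closure is all of $AD(X)$.

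The main obstacle I expect is bookkeeping around axiom $(iii)$ and the index set: reconciling the fact that suprema in $[Y]^{\leq\omega}\times[X\setminus Y]^{\leq\omega}$ under $\preceq$ need not be coordinatewise unions with the requirement that membership ``$x\in A\cup B$'' stabilize along countable increasing chains. This is exactly why $\Gamma$ must be taken $\sigma$-complete \emph{and} cofinal rather than all of the product; I would need to be careful in the forward direction to produce such a $\Gamma$, and in the converse to use cofinality to show that $\sup_n(A_n,B_n)$ has the same ``$A\cup B$'' trace as $\bigcup_n(A_n\cup B_n)$ up to a set on which both retraction families already agree. A secondary, purely technical point is checking continuity of $R_{(A,B)}$ at points of $X_0\times\{0\}$ that are limits of nets from $X_1$ landing in $A\cup B$ — but this is precisely the content of Lemma \ref{lemma51}, so it can be cited rather than redone.
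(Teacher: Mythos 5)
Your sufficiency direction is essentially the paper's argument: build $R_{(A,B)}$ from Lemma \ref{lemma51} using $(***)$ and verify the four axioms (your appeal to Proposition \ref{rey2} for axiom $(iv)$ is a harmless variant of the paper's direct check, and you correctly isolate the delicate point in axiom $(iii)$, namely that the trace $A\cup B$ must stabilize along countable chains). The problem is in your necessity direction, which inverts the order of operations of the paper's proof, and in doing so runs into a genuine gap at the re-indexing step. You restrict the $r$-skeleton of $AD(X)$ to $X_0$ first, obtain an $r$-skeleton $\{R_C:C\in[Y]^{\leq\omega}\}$ on $X$ via Theorem \ref{numerables}, and then claim that ``using $(*)$, $(**)$ and Lemma \ref{rey1}'' you can choose, \emph{$\omega$-monotonically in $(A,B)$}, an index $C(A,B)\in[Y]^{\leq\omega}$ with $A\cup(cl_X(B)\setminus B)\subseteq R_{C(A,B)}(X)$. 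Lemma \ref{rey1} does not deliver this: it only upgrades a \emph{pointwise} assignment $x\mapsto s_x$ to an $\omega$-monotone function on countable sets, whereas the set $cl_X(B)\setminus B$ depends on $B$ as a whole and the map $B\mapsto cl_X(B)\setminus B$ does not commute with countable increasing unions (for an increasing sequence of finite sets $B_n$ each $cl_X(B_n)\setminus B_n$ is empty while $cl_X(\bigcup_nB_n)\setminus\bigcup_nB_n$ need not be). Consequently any choice $C(A,B)=A\cup D_B$ with $D_B$ dense in $cl_X(B)\setminus B$ will in general violate $C(\bigcup_nA_n,\bigcup_nB_n)=\bigcup_nC(A_n,B_n)$, and axiom $(iii)$ for the re-indexed family fails. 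That such an $\omega$-monotone assignment is \emph{not} free is exactly why the paper states Proposition \ref{novo2} with this assignment as an explicit extra hypothesis and remarks that it does not know whether it is automatic.

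The paper circumvents this by re-indexing \emph{on $AD(X)$ before restricting}: since $X_1\subseteq\hat Y$, the set $C=(A\times\{0,1\})\cup(B\times\{1\})$ is a legitimate countable subset of the induced space $\hat Y$, the assignment $(A,B)\mapsto C$ visibly commutes with countable increasing unions, and the needed containment $(cl_X(B)\setminus B)\times\{0\}\subseteq cl_{AD(X)}(B\times\{1\})\subseteq R_C(AD(X))$ comes for free because retracts of compact spaces are closed and contain their index set. Only afterwards does one restrict to $X_0$, taking $\Gamma=\{(A,B):R_{(A,B)}(X_0)\subseteq X_0\}$, which Remark \ref{obscuth} shows is $\sigma$-complete and cofinal; this is also where the allowance that $\Gamma$ be a proper cofinal subset of the product (rather than the whole product, as in your version) is actually used. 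You should restructure the forward direction along these lines; the rest of your outline then goes through.
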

\begin{proof}
Necessity. Suppose that $AD(X)$ admits an $r$-skeleton with induced space $\hat{Y}$. Observe that $X_1\subseteq \hat{Y}$. According to Theorem \ref{numerables}, we may assume that such $r$-skeleton  is of the form  $\{R_C:C\in [\hat{Y}]^{\leq \omega}\}$ and satisfies the properties of the theorem. Consider the set $Y=\pi(\hat{Y}\cap X_0)$  and put $\Gamma'=[Y]^{\leq \omega}\times [X\setminus Y]^{\leq \omega}$.  For $(A,B)\in \Gamma'$, we define $R_{(A,B)}=R_{A\times \{0,1\}\cup B\times\{1\}}$. We claim that $\{R_{(A,B)}:(A,B)\in \Gamma'\}$ is an $r$-skeleton on $AD(X)$ with induced space $\hat{Y}$. The conditions $(i), (ii)$ and $(iii)$  hold because $\{R_C:C\in [\hat{Y}]^{\leq \omega}\}$ is an $r$-skeleton. For the condition $(iv)$, let $(x,i)\in \hat{Y}$ and choose $(A,B)\in \Gamma'$ so that $(x,i)\in A\times \{0,1\}\cup B\times\{1\}$. It follows that $(x,i)=R_{A\times \{0,1\}\cup B\times\{1\}}(x,i)=R_{(A,B)}(x,i)$.
Thus, we have proved that  $\{R_{(A,B)}:(A,B)\in \Gamma'\}$ is an $r$-skeleton on $AD(X)$ with induced space $\hat{Y}$. By Remark \ref{obscuth},
 the set $\Gamma=\{(A,B)\in \Gamma':R_{(A,B)}(X_0)\subseteq X_0\}$ is $\sigma$-complete and cofinal in $\Gamma'$ and $\{R_{(A,B)}\upharpoonright_{X_0}:(A,B)\in \Gamma\}$ is an $r$-skeleton on $X_0$, with induced  space  $\hat{Y}\cap X_0$. For each $(A,B)\in \Gamma$, we define $r_{(A,B)}=\pi(R_{(A,B)}\upharpoonright_{X_0})$. Hence,  $\{r_{(A,B)}:(A,B)\in \Gamma\}$ is an $r$-skeleton on $X$ with induced space $Y$. By Lemma \ref{teo1}, the conditions $(*)$ and $(**)$  hold.  The  condition $(***)$ is easy to verify.\\

Sufficiency. Now, let $\{r_{(A,B)}:(A,B)\in \Gamma\}$ an $r$-skeleton on $X$ which satisfies the condition $(*)-(***)$, where $\Gamma\subseteq [Y]^{\leq \omega}\times [X\setminus Y]^{\leq \omega}$  is $\sigma$-complete and cofinal in $[Y]^{\leq \omega}\times [X\setminus Y]^{\leq \omega}$ and $Y$ is the induced space.
For each $(A,B)\in \Gamma$, we know that $cl_X(B)\setminus B\subseteq r_{(A,B)}(X)$ and $A\subseteq r_{(A,B)}(X)$. Set $R_{(A,B)}=R_{(A,B,(A,B))}$, where  $R_{(A,B,(A,B))}$  is the retraction of  Lemma \ref{lemma51}.
 We claim that $\{R_{(A,B)}:(A,B)\in \Gamma\}$ is an $r$-skeleton on $AD(X)$. Indeed, we shall prove that the conditions $(i)-(iv)$ of the $r$-skeleton definition  hold.
\begin{enumerate}
\item[$(i)$] If  $(A,B)\in \Gamma$, then $R_{(A,B)}(AD(X))=(r_{(A,B)}(X)\times \{0\})\cup ((A\cup B) \times \{1\})$ is a cosmic space.
\item[$(ii)$] Let  $(A,B)\preceq (A',B')$. Fix $(x,i)\in AD(X)$. Then
\begin{align*}
R_{(A,B)}\circ R_{(A',B')}(x,i)&=
\begin{cases}
R_{(A,B)}(x,1) & \mbox{ if }x \in A'\cup B'\mbox{ and }i=1,\\
R_{(A,B)}(r_{(A',B')}(x),0) & \mbox{ otherwise.} \\
\end{cases} \\
&
=
\begin{cases}
(x,1) & \mbox{ if }x \in A\cup B\mbox{ and }i=1,\\
(r_{(A,B)}(x),0) & \mbox{ if }x \in (A'\cup B')\setminus (A\cup B) \mbox{ and }i=1,\\
(r_{(A,B)}(r_{(A',B')}(x)),0) & \mbox{ otherwise.} \\
\end{cases} \\
&=
\begin{cases}
(x,1) & \mbox{ if }x \in A\cup B\mbox{ and }i=1,\\
(r_{(A,B)}(x),0) & \mbox{ otherwise.} \\
\end{cases} \\
&= R_{(A,B)}(x,i).
\end{align*}

And we also have that
\begin{align*}
R_{(A',B')}\circ R_{(A,B)}(x,i)&=
\begin{cases}
R_{(A',B')}(x,1) & \mbox{ if }x \in A\cup B\mbox{ and }i=1,\\
R_{(A',B')}(r_{(A,B)}(x),0) & \mbox{ otherwise.} \\
\end{cases} \\
&=
\begin{cases}
(x,1) & \mbox{ if }x \in A\cup B\mbox{ and }i=1,\\
(r_{(A',B')}(r_{(A,B)}(x)),0) & \mbox{ otherwise.} \\
\end{cases} \\
&=
\begin{cases}
(x,1) & \mbox{ if }x \in A\cup B\mbox{ and }i=1,\\
(r_{(A,B)}(x),0) & \mbox{ otherwise.} \\
\end{cases} \\
&= R_{(A,B)}(x,i).
\end{align*}
 Therefore, $R_{(A,B)}=R_{(A,B)}\circ R_{(A',B')}=R_{(A',B')}\circ R_{(A,B)}$ whenever  $(A,B)\preceq (A',B')$.

\item[$(iii)$] Let $\langle(A_n,B_n)\rangle_{ n<\omega}\subseteq \Gamma$ be such that $(A_n,B_n)\preceq(A_{n+1},B_{n+1})$.  $cl_X(B)\setminus B\subseteq Y$. For simplicity, put  $(A,B)=\sup\{ (A_n,B_n): n<\omega \}$. Fix $(x,i)\in AD(X)$. We will prove that $R_{(A,B)}(x,i)=\lim_{n\rightarrow \infty}R_{(A_n,B_n)}(x,i)$. In fact, if $i=0$, then $R_{(A,B)}(x,0)=(r_{(A,B)}(x),0)$ and $R_{(A_n,B_n)}(x,0)=(r_{(A_n,B_n)}(x),0)$ for all $n < \omega$. Since   $r_{(A,B)}(x)=\lim_{n\rightarrow\infty}r_{(A_n,B_n)}(x)$, we conclude that $R_{(A,B)}(x,0)=  \lim_{n\rightarrow \infty}R_{(A_n,B_n)}(x,0)$.
Now, we consider the case when $i=1$. If $x\in A\cup B$, then there is $n_0<\omega$ such that  $x\in A_n\cup B_n$ for all $n\geq n_0$.
 Hence, $R_{(A,B)}(x,1)=(x,1)=R_{(A_n,B_n)}(x,1)$, for  every $n\geq n_0$.  It  then follows that $R_{(A,B)}(x,1)=  \lim_{n\rightarrow \infty}R_{(A_n,B_n)}(x,1)$.
  If $x\notin A\cup B$, then $R_{(A,B)}(x,1)=  (r_{(A,B)}(x),0)$ and $R_{(A_n,B_n)}(x,1)=(r_{(A_n,B_n)}(x),0)$, for every $n < \omega$.
  Since the equality $r_{(A,B)}(x)=\lim_{n\rightarrow\infty}r_{(A_n,B_n)}(x)$ holds,
$R_{(A,B)}(x,1)=  \lim_{n\rightarrow \infty}R_{(A_n,B_n)}(x,1).$

\item[$(iv)$] Let $(x,i)\in AD(X)$. First, if $i=0$, then we notice that the equality $x=\lim_{(A,B)\in \Gamma}r_{(A,B)}(x)$ implies that $(x,0)=\lim_{(A,B)\in \Gamma}R_{(A,B)}(x,0)$. Now, we su\-ppose $i=1$ and let  $x\in Y$. By cofinality of $\Gamma$, there is $(A,B)\in \Gamma$ such that $\{x\}\subseteq A$. Since $x\in A\cup B$, we have $(x,1)=R_{(A,B)}(x,1)$. Now, let $x\in X\setminus Y$. By using  the confinality of $\Gamma$, there is $(A,B)\in \Gamma$ such that $\{x\}\subseteq B$.  It follows that $(x,1)=R_{(A,B)}(x,1)$.  Therefore, $(x,i)=\lim_{(A,B)\in \Gamma}R_{(A,B)}(x,i)$, for each $i\in \{0,1\}$.
\end{enumerate}

\end{proof}
 From the proof of the previous  theorem, we  deduce the next corollary.
\begin{corollary}\label{principalvaldivia}
   Let   $X$ be a compact space such that $AD(X)$ admits a commutative $r$-skeleton.  Then   the $r$-skeleton   $\{r_{(A,B)}:(A,B)\in \Gamma\}$ on $X$ obtained in the Theorem \ref{principal} is commutative.
\end{corollary}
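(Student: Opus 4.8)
The plan is to revisit the necessity half of the proof of Theorem \ref{principal} and verify that, when the $r$-skeleton we start with on $AD(X)$ is commutative, commutativity is preserved at each stage of the construction, so that the family $\{r_{(A,B)}:(A,B)\in\Gamma\}$ produced on $X$ is commutative. Recall that commutativity means \emph{every} pair of retractions in the family commutes (Definition \ref{skeleton}), so it suffices to check that each operation performed in that proof carries commutativity along. There are four such operations to inspect: (1) the replacement of the index set by $[\hat Y]^{\leq\omega}$ via Theorem \ref{numerables}; (2) the reparametrization by pairs $(A,B)$; (3) the restriction to $X_0$ furnished by Remark \ref{obscuth}; and (4) the transport along the homeomorphism $\pi\colon X_0\to X$.

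First I would note that Theorem \ref{numerables}, applied to a commutative $r$-skeleton, yields a commutative one: in its proof each retraction $R_A$ is literally $r_{\psi(A)}$ for one fixed $\omega$-monotone $\psi$, so for any $A,A'\in[\hat Y]^{\leq\omega}$ we get $R_A\circ R_{A'}=r_{\psi(A)}\circ r_{\psi(A')}=r_{\psi(A')}\circ r_{\psi(A)}=R_{A'}\circ R_A$ using commutativity of the original family. Hence we may take the $r$-skeleton $\{R_C:C\in[\hat Y]^{\leq\omega}\}$ appearing at the start of the necessity argument to be commutative. Stage (2) is then immediate: $R_{(A,B)}$ is by definition $R_{A\times\{0,1\}\cup B\times\{1\}}$, so $\{R_{(A,B)}:(A,B)\in\Gamma'\}$ is just a reindexing of a subfamily of $\{R_C\}$, hence commutative, and a fortiori its subfamily indexed by $\Gamma$ is commutative.

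For stage (3), given $(A,B),(A',B')\in\Gamma$, the retractions $R_{(A,B)}$ and $R_{(A',B')}$ both map $X_0$ into $X_0$ by the very definition of $\Gamma$, and they commute on all of $AD(X)$; therefore $R_{(A,B)}\upharpoonright_{X_0}$ and $R_{(A',B')}\upharpoonright_{X_0}$ commute as self-maps of $X_0$, so $\{R_{(A,B)}\upharpoonright_{X_0}:(A,B)\in\Gamma\}$ is a commutative $r$-skeleton on $X_0$. Finally, for stage (4), $r_{(A,B)}$ is the conjugate of $R_{(A,B)}\upharpoonright_{X_0}$ by the homeomorphism $\pi$ (identifying $X_0$ with $X$), and conjugation by a fixed homeomorphism respects composition, so $r_{(A,B)}\circ r_{(A',B')}=\pi\circ(R_{(A,B)}\upharpoonright_{X_0})\circ(R_{(A',B')}\upharpoonright_{X_0})\circ\pi^{-1}$, an expression symmetric in the two indices; hence $r_{(A,B)}\circ r_{(A',B')}=r_{(A',B')}\circ r_{(A,B)}$. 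This proves the corollary. None of these steps is a genuine obstacle; the only one that warrants a moment's care is the first, because Theorem \ref{numerables} in general merely reindexes an $r$-skeleton without recording which retractions commute — but the explicit formula $R_A=r_{\psi(A)}$ makes it transparent that all commutation relations survive.
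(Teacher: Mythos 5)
Your proof is correct and follows exactly the route the paper intends: the paper gives no explicit argument, merely asserting that the corollary "is deduced from the proof of Theorem \ref{principal}", and your four-stage verification (Theorem \ref{numerables} via the single $\omega$-monotone $\psi$, the reindexing by pairs, the restriction to $X_0$, and conjugation by $\pi$) is precisely that deduction made explicit. No gaps.
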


It is well known (see \cite{kalenda2}) that there are  Valdivia compact spaces whose their  Alexandroff duplicates are not Valdivia compact.
If the $r$-skeleton   $\{r_{(A,B)}:(A,B)\in \Gamma\}$ on $X$ given in  Theorem  \ref{principal} is commutative, we do not know whether it can be extended to  a commutative $r$-skeleton on $AD(X)$.

If we have a commutative $r$-skeleton  of the form $\{r_{(A,B)}:(A,B)\in \Gamma\}$ on $X$,  the conditions given in the Theorem  \ref{principal} are not  clear for extend  to a commutative $r$-skeleton on $AD(X)$. In the next result, we add one more condition in order that we can  extend commutative $r$-skeletons.

\begin{corollary}
Let   $X$ be a compact space and  $\{r_{(A,B)}:(A,B)\in \Gamma\}$ a commutative $r$-skeleton  on $X$ as in the Theorem \ref{principal} which satisfies
\begin{itemize}
\item[$(****)$] for every $(A,B),(A',B')\in \Gamma$, $r_{(A,B)}(x)=r_{(A',B')}(r_{(A,B)}(x))$, for each $x\in B'\setminus B$.
\end{itemize}
Then $AD(X)$ admits a commutative $r$-skeleton.
\end{corollary}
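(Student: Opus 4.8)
The plan is to invoke Theorem \ref{principal} for the free part of the statement and then reduce everything to a single computation. Indeed, the $r$-skeleton $\{r_{(A,B)}:(A,B)\in\Gamma\}$ on $X$ satisfies $(*)$--$(***)$, so by the sufficiency direction of Theorem \ref{principal} the family $\{R_{(A,B)}:(A,B)\in\Gamma\}$, where $R_{(A,B)}:=R_{(A,B,(A,B))}$ is the retraction produced by Lemma \ref{lemma51}, is already an $r$-skeleton on $AD(X)$. Hence it only remains to check that this family is \emph{commutative}, i.e. that $R_{(A,B)}\circ R_{(A',B')}=R_{(A',B')}\circ R_{(A,B)}$ for \emph{arbitrary} $(A,B),(A',B')\in\Gamma$, not merely for $\preceq$-comparable ones (the comparable case is exactly what was checked in part $(ii)$ of the proof of Theorem \ref{principal}).

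Fix $(x,i)\in AD(X)$. If $i=0$ the two compositions send $(x,0)$ to $(r_{(A,B)}(r_{(A',B')}(x)),0)$ and to $(r_{(A',B')}(r_{(A,B)}(x)),0)$ respectively, and these agree because $\{r_{(A,B)}:(A,B)\in\Gamma\}$ is commutative. So assume $i=1$ and distinguish cases according to membership of $x$ in $A\cup B$ and in $A'\cup B'$. If $x\in(A\cup B)\cap(A'\cup B')$ both sides equal $(x,1)$; if $x\notin(A\cup B)\cup(A'\cup B')$ both sides equal $(r_{(A,B)}(r_{(A',B')}(x)),0)=(r_{(A',B')}(r_{(A,B)}(x)),0)$, again by commutativity on $X$. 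The remaining, substantial, case is the mixed one, say $x\in A\cup B$ and $x\notin A'\cup B'$ (the opposite mixed case is identical after interchanging the two pairs). Here $R_{(A',B')}\circ R_{(A,B)}(x,1)=R_{(A',B')}(x,1)=(r_{(A',B')}(x),0)$ while $R_{(A,B)}\circ R_{(A',B')}(x,1)=R_{(A,B)}(r_{(A',B')}(x),0)=(r_{(A,B)}(r_{(A',B')}(x)),0)$, so everything boils down to the identity $r_{(A,B)}(r_{(A',B')}(x))=r_{(A',B')}(x)$.

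To obtain this identity, note that $A\subseteq Y$ and $B\subseteq X\setminus Y$, so either $x\in A$ or $x\in B$. If $x\in A$, then $(***)$ gives $r_{(A,B)}(x)=x$, and commutativity on $X$ yields $r_{(A,B)}(r_{(A',B')}(x))=r_{(A',B')}(r_{(A,B)}(x))=r_{(A',B')}(x)$. If $x\in B$, then $x\notin A'\cup B'$ forces $x\in B\setminus B'$, and condition $(****)$ applied with first pair $(A',B')$ and second pair $(A,B)$ gives precisely $r_{(A',B')}(x)=r_{(A,B)}(r_{(A',B')}(x))$. This settles all cases, so $\{R_{(A,B)}:(A,B)\in\Gamma\}$ is a commutative $r$-skeleton on $AD(X)$.

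The only real difficulty I expect is the bookkeeping in the mixed case: one must recognize that a point $x\in B\subseteq X\setminus Y$ is not fixed by $r_{(A,B)}$ (which is exactly why the extra hypothesis $(****)$ is needed and why it was introduced), and one must match the quantifier pattern of $(****)$ to the required identity by assigning the correct pair to the role of the ``inner'' retraction and the correct pair to the role of the ``outer'' one. Once the case split is organized around the two alternatives $x\in A$ (handled by $(***)$) and $x\in B$ (handled by $(****)$), the verification is routine.
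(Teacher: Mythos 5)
Your proof is correct. The paper states this corollary without giving a proof (it is presented as a consequence of the construction in Theorem \ref{principal}), and your argument supplies exactly the intended verification: the only content beyond the sufficiency direction of Theorem \ref{principal} is commutativity of $R_{(A,B)}$ and $R_{(A',B')}$ for $\preceq$-incomparable pairs, and your case analysis --- in particular handling $x\in A$ via $(***)$ plus commutativity on $X$, and handling $x\in B$, $x\notin A'\cup B'$ by applying $(****)$ with the two pairs interchanged so that the point lies in $B\setminus B'$ --- settles it.
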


\begin{corollary}
Let $X$ be a compact space. If $AD(X)$ admits an $r$-skeleton, then  the induced space $Y=\pi(\hat{Y}\cap X_0)$ of $X$ is unique. That is, if $Y'$ is an induced space by an arbitrary $r$-skeleton on $X$, then $Y'=Y$.
\end{corollary}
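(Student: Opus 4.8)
The plan is to prove $Y'=Y$ by establishing the two inclusions separately, using only the following facts, all available from the cited results and the proof of Theorem~\ref{principal}: the distinguished set $Y=\pi(\hat Y\cap X_0)$ is itself the induced space of an $r$-skeleton on $X$ (the projection to $X_0$ of the skeleton constructed there); every induced space of an $r$-skeleton on a compact space is dense, countably closed and Fr\'echet--Urysohn, and it contains every isolated point of the space (for isolated $p$, condition $(iv)$ forces some retraction of the skeleton to fix $p$); and, by Lemma~\ref{teo1}, $Y$ satisfies $(*)$, i.e.\ $cl_X(B)\setminus B\subseteq Y$ for every $B\in[X\setminus Y]^{\le\omega}$. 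Write $\{r'_s:s\in\Gamma'\}$ for an arbitrary $r$-skeleton on $X$ with induced space $Y'$.

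First I would show $Y'\subseteq Y$. Suppose $p\in Y'\setminus Y$. Since $p\notin Y$ and $Y$ contains all isolated points of $X$, the point $p$ is not isolated in $X$; as $Y'$ is dense, $p$ is then not isolated in $Y'$, so $p\in cl_{Y'}(Y'\setminus\{p\})$, and since $Y'$ is Fr\'echet--Urysohn there is a sequence $\langle y_n\rangle_{n<\omega}\subseteq Y'\setminus\{p\}$ with $y_n\to p$. If infinitely many $y_n$ lie in $Y$, then, passing to that subsequence, we obtain a countable subset of the countably closed set $Y$ whose closure contains $p$, whence $p\in Y$, a contradiction. Otherwise all but finitely many $y_n$ lie in $X\setminus Y$, so $B:=\{y_n:n<\omega\}\setminus Y$ is a countable subset of $X\setminus Y$ with $p\in cl_X(B)$ and $p\notin B$ (because $p\ne y_n$ for all $n$); then $(*)$ gives $p\in cl_X(B)\setminus B\subseteq Y$, again a contradiction. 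Hence $Y'\subseteq Y$.

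Next I would show $Y\subseteq Y'$. Suppose $p\in Y\setminus Y'$. By the inclusion just proved $Y'\subseteq Y$, so $cl_Y(Y')=Y\ni p$; since $Y$ is Fr\'echet--Urysohn there is a sequence $\langle y_n\rangle_{n<\omega}\subseteq Y'$ with $y_n\to p$, and $y_n\ne p$ for all $n$ because $p\notin Y'$. Then $\{y_n:n<\omega\}$ is a countable subset of the countably closed set $Y'$, so its closure, which contains $p$, is contained in $Y'$; thus $p\in Y'$, a contradiction. Therefore $Y\subseteq Y'$, and combining the two inclusions yields $Y'=Y$.

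The only delicate point is the handling of the approximating sequences. In the first inclusion one must split the sequence into the part that stays in $Y$ — controlled by countable closedness of $Y$ — and the part that escapes into $X\setminus Y$ — controlled by condition $(*)$ of Lemma~\ref{teo1}, which is precisely where the hypothesis that $AD(X)$ admits an $r$-skeleton enters. In the second inclusion the essential observation is that one applies Fr\'echet--Urysohnness of $Y$, not of $Y'$ (the latter being useless once $p\notin Y'$), and this is legitimate only because of the already-established inclusion $Y'\subseteq Y$; everything else reduces to denseness, countable closedness, and the behaviour of induced spaces at isolated points.
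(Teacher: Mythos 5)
Your proof is correct, but it follows a genuinely different route from the one in the paper. The paper argues in a single stroke: assuming $Y\neq Y'$, it invokes Lemma~3.2 of \cite{cuth1} to conclude that $Y\cap Y'$ cannot be dense, takes a nonempty open $V$ missing $Y\cap Y'$ and an infinite countable $B\subseteq W\cap Y'$ with $\overline{W}\subseteq V$; countable closedness of $Y'$ traps $cl_X(B)$ inside $V\cap Y'$, while condition $(*)$ of Lemma~\ref{teo1} pushes the nonempty set $cl_X(B)\setminus B$ into $Y$, contradicting $V\cap(Y\cap Y')=\emptyset$. You instead prove the two inclusions separately and dispense with the external uniqueness lemma altogether, at the price of using two further properties of induced spaces (the Fr\'echet--Urysohn property and the fact that an induced space must contain every isolated point, which indeed follows from condition $(iv)$ of Definition~\ref{skeleton}) together with the observation, extracted from the necessity part of the proof of Theorem~\ref{principal}, that $Y=\pi(\hat{Y}\cap X_0)$ is itself the induced space of an $r$-skeleton on $X$. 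Both arguments pivot on the same two ingredients --- countable closedness and condition $(*)$ --- but yours is more self-contained, since it essentially re-proves the relevant instance of the ``dense intersection implies equality'' lemma, whereas the paper's is shorter because it outsources that step. A minor remark: in your first inclusion the hypothesis $p\in cl_X(B)\setminus B$ already forces $B$ to be infinite (a finite subset of a Hausdorff space is closed), so the appeal to $(*)$ is never vacuous; it would be worth a passing word, but it creates no gap.
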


\begin{proof}
Let $Y'$ be a subset of $X$ induced by an $r$-skeleton on $X$. We suppose that $Y\neq Y'$.  By Lemma 3.2 from \cite{cuth1}, we have that  $Y\cap Y'$ cannot be dense in $X$. Hence, there is a nonempty  open subset $V$ of $X$ such that $V\cap (Y\cap Y')=\emptyset$. Let $W$ a nonempty open subset such that $cl_X(W)\subseteq V$. By density of $Y'$, there is an infinite  countable set $B\subseteq W \cap Y'$ . Since  $Y'$  is countably closed,  we must have $cl_{X}(B)\subseteq cl_X(W)\cap Y'\subseteq V\cap Y'$. On the other hand,  Theorem \ref{principal} implies that $cl_{X}(B)\setminus B\subseteq Y$, but this is impossible since  $V\cap (Y\cap Y')=\emptyset$. Therefore, $Y=Y'$.
\end{proof}

As a consequence of the previous corollary,  if $AD(X)$ is not Corson compact space and has an $r$-skeleton, then $X$ is not  a super Valdivia\footnote{We say that a compact space $X$ is {\it super Valdivia} if for every $x\in X$ there is a  dense $\Sigma$-subset $Y$ of $X$ such that $x\in Y$ (see \cite{kalenda1}).} space. In particular, Alexandroff duplicate  of $[0,1]^{\kappa}$ does not admit  an $r$-skeleton, for every $\kappa\geq \omega_1$.

%%%%%%%%%%%%%voy%%%%%%%%%
\bigskip
In terms of $\omega$-monotone functions, we have the next result.

\begin{proposition}\label{novo2}
Let $X$ be a compact space which admits an $r$-skeleton with induced space $Y$. Let us suppose that  $\{r_A:A\in [Y]^{\leq \omega}\}$ is the $r$-skeleton obtained by Theorem \ref{numerables}. If the conditions $(*)$ and $(**)$ from above hold,  and  there is $\psi:[X\setminus Y]^{\leq\omega}\rightarrow [Y]^{\leq \omega}$ $\omega$-monotone such that for $B\in [X\setminus Y]^{\leq \omega}$, $cl_X(B)\setminus B \subseteq r_{\psi(B)}(X)$. Then $AD(X)$ has an $r$-skeleton.
\end{proposition}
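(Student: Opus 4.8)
The plan is to deduce this from Theorem~\ref{principal}: I will build, out of the family $\{r_A:A\in[Y]^{\leq\omega}\}$ and the map $\psi$, an $r$-skeleton on $X$ indexed by the \emph{full} product $\Gamma:=[Y]^{\leq\omega}\times[X\setminus Y]^{\leq\omega}$ (ordered coordinatewise by $\preceq$) that satisfies conditions $(*)$, $(**)$ and $(***)$ of that theorem. The set $\Gamma$ is trivially $\sigma$-complete and cofinal in itself, and, since it is the full product, the supremum of an increasing sequence $\langle(A_n,B_n)\rangle_{n<\omega}$ in $\Gamma$ is the coordinatewise union $\big(\bigcup_{n<\omega}A_n,\bigcup_{n<\omega}B_n\big)$.

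For $(A,B)\in\Gamma$ I would set $r_{(A,B)}:=r_{A\cup\psi(B)}$. First I would verify that the assignment $\Phi\colon(A,B)\mapsto A\cup\psi(B)$, from $\Gamma$ into $[Y]^{\leq\omega}$ ordered by inclusion, is $\omega$-monotone: monotonicity is immediate from the monotonicity of $\psi$, and for an increasing sequence one combines the equality $\psi\big(\bigcup_{n<\omega}B_n\big)=\bigcup_{n<\omega}\psi(B_n)$ (because $\psi$ is $\omega$-monotone) with the description of suprema in $\Gamma$ above to obtain $\Phi\big(\sup_{n<\omega}(A_n,B_n)\big)=\bigcup_{n<\omega}\Phi(A_n,B_n)$. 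Since $\{r_A:A\in[Y]^{\leq\omega}\}$ is an $r$-skeleton and $\Phi$ is $\omega$-monotone, the family $\{r_{(A,B)}:(A,B)\in\Gamma\}=\{r_{\Phi(A,B)}:(A,B)\in\Gamma\}$ at once satisfies $(i)$--$(iii)$ of Definition~\ref{skeleton}. For $(iv)$ I would avoid a direct net argument: note that $A\subseteq A\cup\psi(B)$ forces $r_A(X)\subseteq r_{A\cup\psi(B)}(X)$ (a direct consequence of axiom $(ii)$ applied to $\{r_A\}$), so, taking $B=\emptyset$, $r_A(X)\subseteq r_{(A,\emptyset)}(X)$ for every $A$, whence $\bigcup_{(A,B)\in\Gamma}r_{(A,B)}(X)=\bigcup_{A\in[Y]^{\leq\omega}}r_A(X)=Y$. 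As $Y$ is dense in $X$, Proposition~\ref{rey2} yields $x=\lim_{(A,B)\in\Gamma}r_{(A,B)}(x)$ for every $x\in\overline{Y}=X$. Thus $\{r_{(A,B)}:(A,B)\in\Gamma\}$ is an $r$-skeleton on $X$ with induced space $Y$.

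It then remains to check the conditions of Theorem~\ref{principal}. Conditions $(*)$ and $(**)$ are precisely the hypotheses of the proposition. For $(***)$: on one hand $A\subseteq A\cup\psi(B)\subseteq r_{A\cup\psi(B)}(X)=r_{(A,B)}(X)$ by the property of $\{r_A\}$ supplied by Theorem~\ref{numerables}; on the other hand $cl_X(B)\setminus B\subseteq r_{\psi(B)}(X)$ by the hypothesis on $\psi$, and $r_{\psi(B)}(X)\subseteq r_{A\cup\psi(B)}(X)=r_{(A,B)}(X)$ by the inclusion of images noted above. Feeding this $r$-skeleton into Theorem~\ref{principal} gives that $AD(X)$ admits an $r$-skeleton.

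As for where the work lies: there is no serious obstacle. The one genuine idea is that enlarging the $Y$-index $A$ by $\psi(B)$ is exactly what is needed to secure $(***)$ without destroying $\omega$-monotonicity of the index map; the only step demanding a little care is verifying condition $(iv)$ of the $r$-skeleton definition, which is why I would route it through Proposition~\ref{rey2} together with the identity $\bigcup_{(A,B)\in\Gamma}r_{(A,B)}(X)=Y$ rather than manipulating nets directly.
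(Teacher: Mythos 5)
Your proposal is correct and follows exactly the paper's own route: the paper also defines $r_{(A,B)}:=r_{A\cup\psi(B)}$ on $\Gamma'=[Y]^{\leq\omega}\times[X\setminus Y]^{\leq\omega}$ and then invokes Theorem \ref{principal}. The only difference is that the paper asserts without detail that this family is an $r$-skeleton satisfying $(*)$--$(***)$, whereas you carry out the verification (the $\omega$-monotonicity of $(A,B)\mapsto A\cup\psi(B)$, condition $(iv)$ via Proposition \ref{rey2}, and condition $(***)$ from $A\subseteq r_A(X)$), all of which is sound.
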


\begin{proof}
Let $\Gamma'=[Y]^{\leq \omega}\times[X\setminus Y]^{\leq \omega}$. For each $(A,B)\in \Gamma'$, let $r_{(A,B)}:X\rightarrow X$ the function defined  by $r_{(A,B)}=r_{A\cup \psi(B)}$. The family $\{r_{(A,B)}:(A,B)\in \Gamma'\}$ is an $r$-skeleton on $X$ which satisfies the conditions $(*)-(***)$. Therefore, using Theorem \ref{principal}, $AD(X)$ has an $r$-skeleton.
\end{proof}

 We do not know whether or not the monotony of  Proposition \ref{novo2} is sufficient.

\medskip

The next is an example of application of Theorem \ref{principal}.
\begin{example}
Let $\kappa$ an uncountable  cardinal number. We considerer the $r$-skeleton $\{r'_A:A\in \mathbf{C}_\kappa\}$ given  in the Example \ref{resqordinal}. For this $r$-skeleton, $Y=\bigcup\mathbf{C}_\kappa$ is the induced space. By properties of the ordinal space $[0,\kappa]$ it follows that $(*)$ and $(**)$  hold. By Theorem \ref{numerables}, from $\{r'_A:A\in \mathbf{C}_\kappa\}$ we obtain an $r$-skeleton $\{r_A:A\in [Y]^{\leq \omega}\}$ with  induced space $Y$ that satisfies $(*)$ and $(**)$.   Now, if  $\psi:[X\setminus Y]^{\leq \omega}\rightarrow [Y]^{\leq \omega}$ is the function defined by $\psi(B)=\{\beta+1:\beta\in B\}$, for each $B\in [X\setminus Y]^{\leq \omega}$,  then $\psi$ is  $\omega$-monotone. We observe that for $B\in [X\setminus Y]^{\leq \omega}$, $cl_X(B)\setminus B\subseteq  cl_X(\psi(B))\subseteq r_{\psi(B)}([0,\kappa])$. As a consequence of Proposition \ref{novo2}, we have that $AD([0,\kappa])$ has an $r$-skeleton.
\end{example}

To finish this section we generalize Question 2.14 of the paper \cite{kalenda2} as follows.

\begin{question}
If $X$ is a linearly ordered compact space  with an $r$-skeleton, does $X$ admit an $r$-skeleton that satisfies the   conditions of the Theorem \ref{principal}?
\end{question}

%%%%%%%%%%%%%%%%%%%%%%%%%%%%
%%$\pi$-skeleton
%%%%%%%%%%%
\section{$\pi$-skeleton}
%%%%%%%%%%%%%%%%%%%%%%%%%%%%%%%%%%

We remember that the Valdivia compact spaces are the  compact spaces in a cube  $[0,1]^\kappa$ with dense intersection with the $\Sigma$-product of $[0,1]^\kappa$.  As the family of compact spaces  with $r$-skeletons are the generalization of the Valdivia compact spaces, it is natural to ask about the possibility of  describing the compact spaces with an $r$-skeleton in  the vein of the definition of  Valdivia compact spaces. To get an answer to this question we introduce the following notion.

\begin{definition}
Let  $X\subseteq [0,1]^{T}$ be a compact space.
 A  family  $\{F_s:s\in \Gamma\}$ of cosmic spaces and an $
\omega$-monotone  function  $\psi:\Gamma\rightarrow [T]^{\leq\omega}$ generate a {\it $\pi$-skeleton} on $X$ if  the next conditions  are satisfied:
 \begin{itemize}
 \item[$(a)$]  $F_s\subseteq F_{t}$ whenever $s\leq t$,
 \item[$(b)$]  $\bigcup_{s\in \Gamma}F_s$ is dense in $X$, and
 \item[$(c)$]  $\pi_{\psi(s)}\upharpoonright_{F_s}$ is an homeomorphism such that $\pi_{\psi(s)}(X)=\pi_{\psi(s)}(F_s)$, for each $s\in \Gamma$.
 \end{itemize}
 The set $\bigcup_{s\in \Gamma}F_s$ will be called {\it the induced space} of the $\pi$-skeleton.
\end{definition}
In what follows,  when we will say  that $X$ has $\pi$-skeleton generated by the pair  $(\{F_s:s\in \Gamma\}, \psi:\Gamma\rightarrow [T]^{\leq\omega} )$ we shall understand that $X\subseteq [0,1]^T$.\\

Next, we shall describe two  examples of spaces with $\pi$-skeletons.

\begin{example}
If  $\alpha$ is an infinite cardinal number, then there exists an embedding $h:[0,\alpha]\rightarrow [0,1]^{\alpha + 1}$ such that $h([0,\alpha])$  admits a $\pi$-skeleton. Indeed, let  $h:[0,\alpha]\rightarrow [0,1]^{\alpha+1}$ be the function such that for every $\beta\leq \alpha$ we have
 $$
\pi_\theta(h(\beta))=\left\{ \begin{array}{lcc}
0 & if  &\theta< \beta\\
1 & if &\theta\geq\beta,
\end{array}
\right.
$$  for each $\theta<\alpha$. It is easy to verify that $h$ is an embedding. Let $L_\alpha$ be the set of limit points of $[0,\alpha]$ and   consider the set    $\Gamma=\{A\in [[0,\alpha]\setminus L_\alpha]^{\leq\omega}: 0\in A\}$. Let  $X=h([0,\alpha])$ and, for each $A\in \Gamma$ let $F_A:=\overline{h(A\cup A+1)}$ and   $\varphi(A):=A\cup\{\theta\leq\alpha:\theta+1\in A\mbox{ and } cof(\theta)\leq \omega\}$.  It is clearly that the  function $\varphi:\Gamma\rightarrow [\alpha+1]^{\leq\omega}$ is $\omega$-monotone and the family of cosmic subspaces  $\{F_A:A\in [Y]^{\leq \omega}\}$ satisfies $(a)$ and $(b)$. To prove the last condition of being $\pi$-skeleton, we fix $A\in \Gamma$.  First, we shall prove that $\pi_{\varphi(A)}(F_A)=\pi_{\varphi(A)}(X)$. Indeed, let $x\in X\setminus F_A$ and  choose $\beta \leq \alpha$ so that $x=h(\beta)$.  We point out that $\sup(\varphi(A))=\sup(A)$ and $\beta\notin h^{-1}(F_A)=\overline{A\cup A+1}$. Define $\beta_0:=\sup\{\theta\in \varphi(A):\theta<\beta\}$.\\

\begin{claim}{1} If $\beta_0\in \varphi(A)$, then $\beta_0+1<\beta$.
\end{claim}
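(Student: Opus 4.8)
The plan is to argue by contradiction, using only two ingredients: that $h$ is an embedding with $h^{-1}(F_A)=\overline{A\cup A+1}$ (closures in $[0,\alpha]$), and the dichotomy that if $\mu\in\varphi(A)$ then $\mu\in A$ or $\mu+1\in A$. Write $\beta\leq\alpha$ for the ordinal with $x=h(\beta)$. Since $0\in A\subseteq h^{-1}(F_A)$ we have $h(0)\in F_A$, so injectivity of $h$ together with $x\notin F_A$ forces $\beta>0$; hence $\{\theta\in\varphi(A):\theta<\beta\}$ contains $0$, so $\beta_0$ is well defined and $\beta_0\leq\beta$. It then suffices to rule out $\beta_0=\beta$ and $\beta_0+1=\beta$, since $\beta_0+1\leq\beta$ would then give $\beta_0+1<\beta$.

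First suppose $\beta_0=\beta$. Every element of $\{\theta\in\varphi(A):\theta<\beta\}$ is strictly below $\beta_0=\beta$, so this set has no largest element; thus $\beta$ is a limit ordinal equal to $\sup\{\theta\in\varphi(A):\theta<\beta\}$. I would then show $\beta\in\overline{A\cup A+1}=h^{-1}(F_A)$, contradicting $x=h(\beta)\notin F_A$: given $\gamma<\beta$, pick $\theta\in\varphi(A)$ with $\gamma<\theta<\beta$; if $\theta\in A$ then $\theta\in(\gamma,\beta]\cap(A\cup A+1)$, and if instead $\theta+1\in A$ then $\theta+1\leq\beta$ (as $\theta<\beta$), so $\theta+1\in(\gamma,\beta]\cap(A\cup A+1)$. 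In either case the neighbourhood $(\gamma,\beta]$ meets $A\cup A+1$, so $\beta$ lies in its closure.

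Next suppose $\beta_0+1=\beta$. Now I use the hypothesis $\beta_0\in\varphi(A)$: if $\beta_0\in A$ then $\beta=\beta_0+1\in A+1$, and if $\beta_0+1\in A$ then $\beta=\beta_0+1\in A$. Either way $\beta\in A\cup A+1\subseteq h^{-1}(F_A)$, so $x=h(\beta)\in F_A$, again a contradiction. Hence $\beta_0+1<\beta$.

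The argument is essentially bookkeeping, and the only step that takes a moment is the first case: one must observe that a supremum of ordinals all lying strictly below $\beta$ can equal $\beta$ only if it is not attained, and then transfer the approximation of $\beta$ from inside $\varphi(A)$ to an approximation from inside $A\cup A+1$ by means of the dichotomy above. (The cofinality condition appearing in the definition of $\varphi(A)$ plays no role in this claim; it becomes relevant only afterwards, when checking that $\pi_{\varphi(A)}\upharpoonright_{F_A}$ is injective.)
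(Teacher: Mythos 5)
Your proof is correct and takes essentially the same route as the paper: the decisive step, ruling out $\beta=\beta_0+1$ via the dichotomy that $\beta_0\in\varphi(A)$ forces $\beta_0\in A$ or $\beta_0+1\in A$, hence $\beta\in A\cup A+1\subseteq h^{-1}(F_A)$, is exactly the paper's argument. The only difference is that you also spell out why $\beta_0<\beta$ (excluding $\beta_0=\beta$ by a cofinality argument), a point the paper dismisses as clear from the choice of $\beta_0$.
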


\smallskip

\begin{proofclaim} It is clearly from  the choice of $\beta_0$ that $\beta_0<\beta$. Let us suppose that $\beta=\beta_0 + 1$.
Since $\beta_0\in \varphi(A)$, $\beta_0\in A$ or $\beta_0+1\in A$. Hence,  $\beta=\beta_0 +1\in A\cup A+1\subseteq h^{-1}(F_A) $. Since  $\beta\notin \overline{A\cup A+1}$, we must have that  $\beta_0+1<\beta$.
\end{proofclaim}

To continue with the proof we need to consider two cases for  $\beta_0$: For the first case, we suppose $\beta_0\in L_\alpha$. Clearly, $\beta_0\in \overline{\varphi(A)} \subseteq \overline{A\cup A+1}$. Since $\beta\notin \overline{A\cup A+1}$,   we have that    $\beta_0<\beta$ and $(\beta_0,\beta)\cap \varphi(A)=\emptyset$. Also, $\beta_0\notin \varphi(A)$, indeed, let us suppose  $\beta_0\in \varphi(A)$. Since $\beta_0\in L_\alpha$, we have that  $\beta_0+1\in A\subseteq \varphi(A)$. According to Claim 1, $\beta_0+1<\beta$, but this contradicts the equality $\beta_0=\sup\{\theta\in \varphi(A):\theta<\beta\}$.  Hence, if $\theta\in \varphi(A)$, then $\theta<\beta_0$ or $\theta\geq\beta$.  For $\theta\in \varphi(A)$ with $\theta< \beta_0$, we have $\pi_\theta(h(\beta_0))=0=\pi_\theta(h(\beta))$. And, for $\theta\in \varphi(A)$ with $\theta\geq \beta$, $\pi_\theta(h(\beta_0))=1=\pi_\theta(h(\beta))$. Thus, $\pi_{\varphi(A)}(h(\beta))=\pi_{\varphi(A)}(h(\beta_0))$ and $h(\beta_0)\in F_A$.
The second case is when $\beta_0\notin L_\alpha$. In this case, we have that $\beta_0\in \varphi(A)$. It then follows that $\beta_0<\beta$. By the Claim 1,  we have that $\beta_0\in A$.    For $\theta\in \varphi(A)$ with $\theta\leq \beta_0$, we have that $\pi_\theta(h(\beta_0+1))=0=\pi_\theta(h(\beta))$. And, for $\theta\in \varphi(A)$ with $\theta\geq \beta$, $\pi_\theta(h(\beta_0+1))=1=\pi_\theta(h(\beta))$. Thus, $\pi_{\varphi(A)}(h(\beta))=\pi_{\varphi(A)}(h(\beta_0+1))$ and $h(\beta_0+1)\in F_A$. In both cases, we obtain $\pi_{\varphi(A)}(h(\beta))\in \pi_{\varphi(A)}(F_A)$. Hence, $\pi_{\varphi(A)}(F_A)=\pi_{\varphi(A)}(X)$.\\

Let us prove that $\pi_{\varphi(A)}(F_A)$ is homeomorphic to $F_A$. It is enough prove that the function $\pi_{\varphi(A)}\upharpoonright_{F_A}$ is one-to-one. Let  $x,y\in F_A$ be distinct
and $\theta,\beta\in h^{-1}(F_A)$ such that $h(\theta)=x$ and $h(\beta)=y$. Without loss of generality we may assume that  $\theta<\beta$. Observe that  $\pi_{\beta'}(x)\neq \pi_{\beta'}(y)$ for each $\beta'\in [\theta,\beta)$.
  If $\beta$ is an isolated point, then there exist $\beta'<\alpha$ such that $\beta=\beta'+1$. So, we obtain that $\beta\in A\cup A+1$. If $\beta\in A$, then $\beta'\in \varphi(A)$. If $\beta\in A+1$, then $\beta'\in A\subseteq \varphi(A)$. If   $\beta$ is not isolated point, by density, then  $A\cap (\theta, \beta+1)\neq \emptyset$ and so there exist $\beta'\in A\cap [\theta,\beta)\subset \varphi(A)$. In all cases, $\varphi(A)\cap [\theta,\beta)\neq \emptyset$. Hence, we have $\pi_{\varphi(A)}(x)\neq \pi_{\varphi(A)}(y)$; that is,  $\pi_{\varphi(A)}$ one-to-one on $F_A$. This shows that  $\pi_{\varphi(A)}(F_A)$ is homeomorphic to $F_A$. Therefore,  $(\{F_A:A\in \Gamma\},\varphi)$ is a $\pi$-skeleton on $X$.
\end{example}

Our next example is the family of Valdivia compact spaces. To describe  a $\pi$-skeleton on a Valdivia compact space we need  prove, first, a useful technical lemma. We recall that for a power $[0,1]^T$ and a point $x\in X$, the support of $x$ is the set $supp(x):=\{t\in T: \pi_t(x)\neq 0\}$, if $A\subseteq [0,1]^T $, then $supp(A):=\bigcup_{x\in A} supp(x)$.

\begin{lemma}\label{lemmahomeo1}
Let $Y\subseteq [0,1]^T$. If  $A\in [Y]^{\leq\omega}$, then $\pi_{supp(A)}\upharpoonright_{\overline{A}}$ is a one-to-one function.
\end{lemma}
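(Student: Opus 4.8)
The plan is to prove injectivity directly, by showing that two points of $\overline{A}$ which agree on all coordinates belonging to $supp(A)$ must in fact agree on \emph{every} coordinate of $T$, and are therefore equal as points of the product $[0,1]^T$.

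Concretely, I would fix $x,y\in\overline{A}$ with $\pi_{supp(A)}(x)=\pi_{supp(A)}(y)$, i.e.\ $\pi_t(x)=\pi_t(y)$ for every $t\in supp(A)$, and then handle an arbitrary coordinate $t\in T\setminus supp(A)$. By the definition $supp(A)=\bigcup_{a\in A}supp(a)$, the condition $t\notin supp(A)$ says exactly that $\pi_t(a)=0$ for all $a\in A$; equivalently $A\subseteq\pi_t^{-1}(\{0\})$. The key (and essentially only) observation is that $\pi_t^{-1}(\{0\})$ is closed in $[0,1]^T$, since $\pi_t$ is continuous and $\{0\}$ is closed in $[0,1]$; hence $\overline{A}\subseteq\pi_t^{-1}(\{0\})$ as well, so $\pi_t(x)=0=\pi_t(y)$. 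Combining the two cases gives $\pi_t(x)=\pi_t(y)$ for all $t\in T$, and since a point of a product is determined by its coordinates, $x=y$. This establishes that $\pi_{supp(A)}\upharpoonright_{\overline{A}}$ is one-to-one.

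I do not expect any real obstacle here; the argument stays entirely inside the ambient cube and uses no compactness, no structure of $Y$, and in fact not even the countability of $A$ (that hypothesis is presumably kept only because the lemma is applied to countable $A$ when building the $\pi$-skeleton). The one point worth stating carefully is the passage ``$A\subseteq\pi_t^{-1}(\{0\})$ closed $\implies\overline{A}\subseteq\pi_t^{-1}(\{0\})$'', which is exactly what forces the two preimages to coincide off the support.
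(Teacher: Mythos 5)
Your proof is correct and is essentially the paper's argument in contrapositive form: the paper shows $supp(x)\subseteq supp(A)$ for every $x\in\overline{A}$ (by finding, for $t\in supp(x)\setminus supp(A)$, a neighborhood $\pi_t^{-1}(U)$ with $0\notin U$ that misses $A$), which is exactly your observation that $\overline{A}\subseteq\pi_t^{-1}(\{0\})$ for each $t\notin supp(A)$. Your remark that countability of $A$ is not needed is also accurate.
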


\begin{proof} First, we shall prove that $supp(x)\subseteq supp(A)$ whenever   $A\in [Y]^{\leq\omega}$ and $x\in \overline{A}$.   If $x\in A$, we trivially have that  $supp(x)\subseteq supp(A)$. Let $x\in \overline{A}\setminus A$ such that  $ supp(x)\setminus supp(A)\not= \emptyset$. For a fix  $t\in supp(x)\setminus supp(A)$ there is an open subset $U\subseteq [0,1]$ such that $0\notin U$ and $x\in \pi^{-1}_{t}(U)$.  So, we have that $A\cap \pi^{-1}_{t}(U)=\emptyset$, which contradicts that  $x\in \overline{A}$. Hence, for every $x\in \overline{A}$ we have $supp(x)\subseteq supp(A)$.  Let $x,y\in \overline{A}$ with $x\neq y$. Then, there exists $t\in supp(x)\cup  supp(y)$ such that $\pi_t(x)\neq \pi_t(y)$. Since $supp(x)\cup supp(y)\subseteq supp(A)$, we have that $\pi_{supp(A)}(x)\neq \pi_{supp(A)}(y)$. Thus, we have proved that $\pi_{supp(A)}\upharpoonright_{\overline{A}}$ is a one-to-one function.
\end{proof}

\begin{proposition}\label{Valdiviapies}  If $X$ is a Valdivia compact space, then $X$ admit a $\pi$-skeleton.
 \end{proposition}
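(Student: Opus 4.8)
The plan is to realise the dense $\Sigma$-subset of $X$ as a union of countable subsets of itself and to use the projections along their supports as the homeomorphisms demanded by condition $(c)$. Since $X$ is Valdivia I may assume $X\subseteq[0,1]^T$ and that $\Sigma:=\{x\in X:|supp(x)|\leq\omega\}$ is dense in $X$. The first remark is that for every $A\in[\Sigma]^{\leq\omega}$ one has $supp(A)\in[T]^{\leq\omega}$, and by Lemma \ref{lemmahomeo1} the map $\pi_{supp(A)}\upharpoonright_{\overline{A}}$ is one-to-one; being continuous with compact domain and with range inside the metrizable cube $[0,1]^{supp(A)}$, it is a homeomorphism onto its image, so $\overline{A}$ is a metrizable compactum and in particular cosmic. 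Hence the only thing that could fail for the candidate pair $(\{\overline{A}:A\in[\Sigma]^{\leq\omega}\},\ A\mapsto supp(A))$ to be a $\pi$-skeleton is the surjectivity clause $\pi_{supp(A)}(\overline{A})=\pi_{supp(A)}(X)$ in $(c)$, and this is exactly where the work lies: I would cut the index set down to those $A$ for which it holds.

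The key step is a closing-off lemma: every $A\in[\Sigma]^{\leq\omega}$ is contained in some $A'\in[\Sigma]^{\leq\omega}$ with $\pi_{supp(A')}(\overline{A'})=\pi_{supp(A')}(X)$. To prove it I would first note that for a countable $S\subseteq T$ the space $\pi_S(X)$ is a separable metrizable compactum and $\pi_S(\Sigma)$ is dense in it, so one can choose a countable $C\subseteq\Sigma$ with $\pi_S(C)$ dense in $\pi_S(X)$; then $\pi_S(\overline{C})$ is a closed set containing $\pi_S(C)$, whence $\pi_S(\overline{C})=\pi_S(X)$. Then I would iterate, alternating ``add coordinates'' and ``add points'': starting from $A_0:=A$, given $A_n$ put $S_n:=supp(A_n)$, choose $C_n\subseteq\Sigma$ countable with $\pi_{S_n}(C_n)$ dense in $\pi_{S_n}(X)$, and set $A_{n+1}:=A_n\cup C_n$. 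The union $A':=\bigcup_{n<\omega}A_n$ is a countable subset of $\Sigma$ containing $A$ with $supp(A')=\bigcup_{n<\omega}S_n$. Given $x\in X$, for each $n$ there is $a_n\in\overline{C_n}\subseteq\overline{A'}$ with $\pi_{S_n}(a_n)=\pi_{S_n}(x)$; since $\overline{A'}$ is compact metrizable the sequence $\langle a_n\rangle$ has a cluster point $a\in\overline{A'}$, and because the $S_n$ increase to $supp(A')$ one gets $\pi_t(a)=\pi_t(x)$ for every $t\in supp(A')$, i.e. $\pi_{supp(A')}(a)=\pi_{supp(A')}(x)$. Thus $A'$ is as required; I will call such a set \emph{saturated}.

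With the lemma in hand I would let $\Gamma$ be the family of saturated members of $[\Sigma]^{\leq\omega}$ ordered by inclusion, and set $F_A:=\overline{A}$ and $\psi(A):=supp(A)$ for $A\in\Gamma$. Applying the closing-off lemma to $A\cup A'$ gives up-directedness of $\Gamma$, and applying it to the singletons $\{x\}$, $x\in\Sigma$, shows $\Sigma\subseteq\bigcup_{A\in\Gamma}F_A$, so $\bigcup_{A\in\Gamma}F_A$ is dense in $X$, which is $(b)$. If $\langle A_n\rangle_{n<\omega}\subseteq\Gamma$ is increasing with union $A$, then $supp(A)=\bigcup_{n}supp(A_n)$ and the cluster-point argument above shows that $A$ is again saturated, so $A=\sup_n A_n$ in $\Gamma$; this yields simultaneously the $\sigma$-completeness of $\Gamma$ and the limit clause in the $\omega$-monotonicity of $\psi$ (the monotone clause being obvious). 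Condition $(a)$ is immediate, each $F_A$ is cosmic by the first paragraph, and for $(c)$ the map $\pi_{\psi(A)}\upharpoonright_{F_A}$ is a homeomorphism onto its image by the first paragraph while $\pi_{\psi(A)}(F_A)=\pi_{\psi(A)}(X)$ holds by the very definition of $\Gamma$. Hence $(\{F_A:A\in\Gamma\},\psi)$ is a $\pi$-skeleton on $X$.

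The step I expect to be the main obstacle is exactly the surjectivity half of $(c)$: it is not enough that $\pi_{\psi(s)}$ embeds $F_s$, it must send $F_s$ onto all of $\pi_{\psi(s)}(X)$, and this forces both the restriction to saturated sets and the interleaved closing-off recursion. The two facts that make the recursion go through are that projections of a compactum onto countably many coordinates are separable metrizable (so one can hit them densely with a countable set) and that closures in $X$ of countable subsets of $\Sigma$ are metrizable compacta (Lemma \ref{lemmahomeo1}), so that the sequences built in the recursion have cluster points.
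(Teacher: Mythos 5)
Your proof is correct and follows essentially the same route as the paper's: both close off a countable subset of the dense $\Sigma$-subset by alternately adding coordinates (supports) and points until the projection along the support becomes onto, take $\Gamma$ to be the (cofinal, $\sigma$-complete) family of such saturated sets, and set $F_A=\overline{A}$, $\psi(A)=supp(A)$, with Lemma \ref{lemmahomeo1} supplying injectivity. The only cosmetic difference is that you verify $\sigma$-completeness via cluster points in the compact metrizable set $\overline{A}$ where the paper invokes the Fr\'echet--Urysohn property of $Y$ to extract a convergent sequence.
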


 \begin{proof} Suppose $X$ is a compact space of $[0,1]^T$ and  $Y=X\cap \Sigma[0,1]^T$  is dense in $X$. We shall prove that $X$ admits a $\pi$-skeleton.

 \smallskip

\begin{claim}{1} The set $\Gamma=\{A\in [Y]^{\leq \omega}: \pi_{supp(A)}(X)=\pi_{supp(A)}(\overline{A})\}$ is cofinal in $[Y]^{\leq\omega}$.
\end{claim}

\smallskip

\begin{proofclaim} Fix $A\in [Y]^{\leq\omega}$. We know that $\pi_{supp(A)}(X)$ is a separable space. Hence, it is possible to find $D'\in [Y]^{\leq\omega}$ such that $\pi_{supp(A)}(D')$ is dense in $\pi_{supp(A)}(X)$. We put $D_1=D'\cup A$ and for a positive $n<\omega$  suppose that we have   established the set $D_n$. Choose $D'_{n+1}\in [Y]^{\leq\omega}$ so that $\pi_{supp(D_n)}(D'_{n+1})$ is dense in $\pi_{supp(D_n)}(X)$. We put $D_{n+1}=D'_{n+1}\cup D_n$. Thus, we have defined  an increasing set $\{D_n:n<\omega\}\subseteq [Y]^{\leq\omega}$. We consider $D=\bigcup_{n<\omega}D_n$ and  shall prove that $\pi_{supp(D)}(D)$ is dense on $\pi_{supp(D)}(X)$. Let $x\in X$, $t\in supp(D)$ and $U$ be an open subset of $[0,1]$ with $\pi_t(x)\in U$. Since $t\in supp(D)$ and $supp(D)=\bigcup_{n<\omega}supp(D_n)$, there is $n<\omega$ such that $t\in supp(D_n)$. As $\pi_{supp(D_n)}(D'_{n+1})$ is dense in $\pi_{supp(D_n)}(X)$, there is $d\in D'_{n+1}\subseteq D_{n+1}\subseteq D$ such that $\pi_t(d)\in U$. Hence, $\pi_{supp(D)}(D)$ is dense in $\pi_{supp(D)}(X)$. By Lemma \ref{lemmahomeo1}, $\pi_{supp(D)}\upharpoonright_{\overline{D}}$ is an homeomorphism and so we have that $\pi_{supp(D)}(\overline{D})=\pi_{supp(D)}(X)$. By construction, $A\subseteq D$ and thus we conclude that $\Gamma$ is cofinal in $[Y]^{\leq\omega}$.
\end{proofclaim}

\begin{claim}{2}
The set $\Gamma$ under the order given by the contention  is a $\sigma$-complete directed partially ordered set.
\end{claim}

\smallskip

\begin{proofclaim}
It is immediate to see that $\Gamma$ is directed partially ordered set. To  prove that $\Gamma$ is $\sigma$-complete we consider an increasing sequence $\{A_n\}_{n<\omega}$  in $\Gamma$, $A=\bigcup_{n<\omega}A_n$ and $x\in X$.  For each $n<\omega$, using that $A_n\in \Gamma$, we pick $a_n\in \overline{A_n}$ such that $\pi_{supp(A_n)}(x)=\pi_{supp(A_n)}(a_n)$. Without loss of generality,  we suppose  $a=\lim_{n\rightarrow \infty}a_n$ since $Y$ is Frechet-Uryshon. As   $\{a_n\}_{n<\omega}\subseteq \overline{\bigcup_{n<\omega}A_n}=\overline{A}$,  $a\in \overline{A}$.  By the choice of $\{a_n\}_{n<\omega}$ and the continuity of $\pi_{supp(A)}$, we have
$$
\pi_{supp(A)}(x)=\lim_{n\rightarrow\infty}\pi_{supp(A)}(a_n)= \pi_{supp(A)}(\lim_{n\rightarrow\infty}a_n)=\pi_{supp(A)}(a).
$$
\end{proofclaim}

\smallskip

 Finally, for each  $A\in \Gamma$ we define $F_A:=\overline{A}$. The family $\{F_A:A\in \Gamma\}$ consists of cosmic spaces which satisfy $(a)$ and $(b)$. Now, we define $\varphi:\Gamma\rightarrow [T]^{\leq \omega}$ by $\varphi(A)=supp(A)$, for each $A\in \Gamma$. Thus, we conclude that the pair $(\{F_A:A\in \Gamma\},\varphi)$ is a $\pi$-skeleton on $X$.
\end{proof}

\begin{remark}
We observe that if    $\{F_s:s\in \Gamma\}$ is a  family of cosmic spaces  on a compact space $X$ which satisfies $(a)$, $(b)$  and $F_s\subseteq \Sigma_0$, for every $s\in \Gamma$, then $X$ is a Valdivia compact space. Hence and according to Proposition \ref{Valdiviapies}, we have that  a compact space $X$ is Valdivia iff admits a $\pi$-skeleton $(\{F_s:s\in \Gamma\},\varphi)$ such that  $F_s\subseteq \Sigma_0$, for every $s\in \Gamma$.
\end{remark}

To prove the main result of this section, we require some  technical lemmas.  In the first of these lemmas, we will see that the notion   of $\pi$-skeleton can be generalized when the base space is a subspace of a product of arbitrary cosmic compact spaces.

\begin{lemma}\label{obs12}
Let $X$ be a compact subspace of a product of cosmic spaces $\Pi_{t\in T}Z_t$. Su\-ppose that there exists a pair $(\{F_s:s\in \Gamma\},\psi:\Gamma\rightarrow [T]^{\leq\omega})$ where $\{F_s:s\in \Gamma\}$ is a family which satisfies $(a)$ and $(b)$, $\psi$ is $\omega$-monotone and for each $s\in \Gamma$ we have that $\pi_{\psi(s)}\upharpoonright_{F_s}$ is an homeomorphism such that $\pi_{\psi(s)}(X)=\pi_{\psi(s)}(F_s)$. Then there is an embedding $g:X\rightarrow [0,1]^{T'}$ such that $g(X)$ admits a $\pi$-skeleton.
\end{lemma}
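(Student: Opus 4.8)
The plan is to reduce the product of cosmic (hence separable metrizable) spaces $\Pi_{t\in T}Z_t$ to a power of $[0,1]$ in a way that carries the pair $(\{F_s:s\in\Gamma\},\psi)$ along. Since each $Z_t$ is a cosmic compact space, it is separable metrizable, and therefore embeds into the Hilbert cube $[0,1]^\omega$; fix such an embedding $e_t:Z_t\hookrightarrow [0,1]^{\omega}$. Putting $T'=T\times\omega$, the maps $(e_t)_{t\in T}$ induce an embedding $e=\Pi_{t\in T}e_t:\Pi_{t\in T}Z_t\hookrightarrow [0,1]^{T'}$, and we let $g$ be the restriction of $e$ to $X$. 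Then $g$ is an embedding of $X$ into $[0,1]^{T'}$, and I will show $g(X)$ admits a $\pi$-skeleton.

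The natural candidate is the pair $(\{g(F_s):s\in\Gamma\},\psi')$ where $\psi':\Gamma\to [T']^{\leq\omega}$ is defined by $\psi'(s)=\psi(s)\times\omega=\{(t,n):t\in\psi(s),\ n<\omega\}$. First one checks that $\psi'$ is $\omega$-monotone: monotonicity is clear since $\psi$ is monotone, and if $\langle s_n\rangle_{n<\omega}$ is increasing then $\psi'(\sup_n s_n)=\psi(\sup_n s_n)\times\omega=(\sup_n\psi(s_n))\times\omega=\bigcup_n(\psi(s_n)\times\omega)=\sup_n\psi'(s_n)$, using that $\psi(s)\times\omega$ is countable whenever $\psi(s)$ is. Next, each $g(F_s)$ is cosmic (it is a continuous image of the cosmic space $F_s$), condition $(a)$ holds because $g$ is injective and $F_s\subseteq F_t$, and condition $(b)$ holds because $g$ is a homeomorphism onto its image and $\bigcup_{s}F_s$ is dense in $X$, so $\bigcup_s g(F_s)=g\big(\bigcup_s F_s\big)$ is dense in $g(X)$.

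The content is condition $(c)$. The key observation is the commuting relation between the coordinate projections: with $\pi^{T'}_{\psi'(s)}:[0,1]^{T'}\to[0,1]^{\psi'(s)}$ and $\pi^{T}_{\psi(s)}:\Pi_{t\in T}Z_t\to\Pi_{t\in\psi(s)}Z_t$, one has $\pi^{T'}_{\psi'(s)}\circ e = \big(\Pi_{t\in\psi(s)}e_t\big)\circ \pi^{T}_{\psi(s)}$, and the map $\Pi_{t\in\psi(s)}e_t$ is an embedding (a product of embeddings). Therefore $\pi^{T'}_{\psi'(s)}\upharpoonright_{g(F_s)}$ equals $\big(\Pi_{t\in\psi(s)}e_t\big)\circ\big(\pi^{T}_{\psi(s)}\upharpoonright_{F_s}\big)\circ (g\upharpoonright_{F_s})^{-1}$, a composition of a homeomorphism onto its image ($g\upharpoonright_{F_s}^{-1}$), the given homeomorphism $\pi^{T}_{\psi(s)}\upharpoonright_{F_s}$, and the embedding $\Pi_{t\in\psi(s)}e_t$; hence it is a homeomorphism onto its image. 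Finally, since $\pi^{T}_{\psi(s)}(X)=\pi^{T}_{\psi(s)}(F_s)$ and $\Pi_{t\in\psi(s)}e_t$ is injective, applying it to both sides and then applying $e^{-1}$-bookkeeping gives $\pi^{T'}_{\psi'(s)}(g(X))=\pi^{T'}_{\psi'(s)}(g(F_s))$.

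I expect the only real subtlety to be keeping the index bookkeeping straight — verifying that $\psi'$ lands in $[T']^{\leq\omega}$ (each $\psi(s)\times\omega$ is countable) and that $\psi'$ preserves countable suprema — together with the verification that a product of embeddings of the $Z_t$'s restricted to a countable subproduct is again an embedding, which is routine since we are dealing with a countable product of metrizable spaces. Everything else is a diagram-chase with the projection maps. Thus $g(X)$ admits the $\pi$-skeleton generated by $(\{g(F_s):s\in\Gamma\},\psi')$, completing the proof.
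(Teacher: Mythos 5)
Your proposal is correct and follows essentially the same route as the paper: embed each cosmic factor $Z_t$ into a countable power of $[0,1]$, take the product embedding $g$, and transport the pair to $(\{g(F_s)\},\phi\circ\psi)$ where $\phi$ replaces each index $t$ by its countable block of new coordinates (your $\psi'(s)=\psi(s)\times\omega$ is exactly the paper's $\phi\circ\psi$ with $T_t=\{t\}\times\omega$). The only cosmetic caveat is that the $Z_t$ are not assumed compact, so to get the embeddings $e_t$ one should first replace $Z_t$ by the compact cosmic (hence separable metrizable) space $\pi_t(X)$ --- an imprecision the paper's own proof shares.
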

\begin{proof} For each $t\in T$, fix  a countable set $T_t$ and an embedding $g_t:Z_t\rightarrow [0,1]^{T_t}$. We consider $g:\Pi_{t\in T}Z_t\rightarrow \Pi_{t\in T}[0,1]^{T_t}$ given by $g((x_t)_{t\in T})=(g_t(x_t))_{t\in T}$, for each $x\in \Pi_{t\in T}Z_t$. It is easy to verify that   $g$ is an embedding.  Without loss of generality, we suppose that $T_t\cap T_{t'}=\emptyset$, for distinct  $t,t'\in T$.  Let $T'=\bigcup_{t\in T}T_t$ and  $\phi:[T]^{\leq\omega}\rightarrow [T']^{\leq\omega}$ be the mapping given by $\phi(A)=\bigcup_{t\in A}T_t$, for each $A\in [T]^{\leq \omega}$. It is immediate from its definition to see that $\phi$  is $\omega$-monotone.  We shall prove that the pair $(\{g(F_s):s\in \Gamma\},\phi\circ\psi)$ is a $\pi$-skeleton on $g(X)$. Indeed, it is clearly that  $\{g(F_s):s\in \Gamma\}$ is a family of cosmic spaces which satisfies $(a)$ and $(b)$, and  since composition of $\omega$-monotone functions is $\omega$-monotone, the composition $\phi\circ \psi$ is an $\omega$-monotone function. It remains prove  condition  $(c)$ for the pair $(\{g(F_s):s\in \Gamma\},\phi\circ\psi)$.  To prove that  $\pi_{\phi\circ \psi(s)}(g(X))=\pi_{\phi\circ\psi(s)}(g(F_s))$, for each $s\in \Gamma$, we fix $s
\in \Gamma$ and $x\in X$. By hypothesis there is $y\in F_s$ such that $\pi_{\psi(s)}(x)=\pi_{\psi(s)}(y)$.
 So, $\pi_t(x)=\pi_t(y)$ and $g_t(\pi_t(x))=g_t(\pi_t(y))$, for each $t\in \psi(s)$. Hence, $\pi_{\phi\circ\psi(s)}(g(x))=\pi_{\phi\circ\psi}(g_t(\pi_t(x))_{t\in T})=(g_t(\pi_t(x)))_{t\in\psi(s)}= (g_t(\pi_t(y)))_{t\in\psi(s)}=\pi_{\phi\circ\psi(s)}(g(y))$. So, we conclude that $\pi_{\phi\circ\psi(s)}(g(X))=\pi_{\phi\circ\psi(s)}(g(F_s))$. Now, we shall prove that $\pi_{\phi\circ\psi(s)}\upharpoonright_{g(F_s)}$ is one-to-one. We pick  $x, y\in F_s$ with $x\neq y$. Using that $\pi_{\psi(s)}\upharpoonright_{F_s}$ is an homeomorphism, we choose $t\in \psi(s)$ such that $\pi_t(x)\neq\pi_t(y)$. Since  $g_t$ is an embedding,  we have that $g_t(\pi_t(x))\neq g_t(\pi_t(y))$ and then we  conclude that $\pi_{\phi\circ\psi(s)}(g(x))=(g_t(\pi_t(x)))_{t\in\psi(s)}\neq (g_t(\pi_t(y)))_{t\in\psi(s)}=\pi_{\phi\circ\psi(s)}(g(y))$. As a consequence of injectivity of $\pi_{\phi(\psi(s))}\upharpoonright_{g(F_s)}$, we have that $\pi_{\phi(\psi(s))}\upharpoonright_{g(F_s)}$ is an homeomorphism. Therefore, $(\{g(F_s):s\in \Gamma\},\phi\circ\psi)$ is a
  $\pi$-skeleton on $g(X)$.
\end{proof}

 We observe that the reciprocal of  Lemma \ref{obs12} is true. Hence, to prove that a space $X$ admits a $\pi$-skeleton it is enough to embed $X$ in an arbitrary product of cosmic spaces and to prove the existence of a pair as in the hypothesis of Lemma \ref{obs12}.\\

The next two lemmas are  consequences of the  characterization of spaces with $r$-skeletons  using inverse limits. The first Lemma is an implicit result inside of the work of inverse limits and Valdivia compact spaces in the paper \cite{kubis1}.

\begin{lemma}\label{cen1}
Let $X$ be a compact space. If $X$ admits an $r$-skeleton $\{r_s:s\in \Gamma\}$,  then  $\underleftarrow{lim}\langle r_s(X),r^{t}_s ,\Gamma\rangle =\{(r_s(x))_{s\in \Gamma}:x\in X\}$, where $r^t_s=r_s\upharpoonright_{r_t(X)}$, for each $t\geq s$.
\end{lemma}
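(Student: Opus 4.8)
The plan is to show the two-sided inclusion between the inverse limit $\underleftarrow{lim}\langle r_s(X),r^t_s,\Gamma\rangle$, sitting inside $\prod_{s\in\Gamma}r_s(X)$, and the set $D:=\{(r_s(x))_{s\in\Gamma}:x\in X\}$. First I would verify that the bonding maps are well defined and continuous: for $s\leq t$ the map $r^t_s=r_s\upharpoonright_{r_t(X)}$ sends $r_t(X)$ into $r_s(X)$ because $r_s=r_s\circ r_t$ by condition $(ii)$ of Definition \ref{skeleton}, and the compatibility $r^t_s\circ r^u_t=r^u_s$ for $s\leq t\leq u$ follows again from $(ii)$; continuity is clear since each $r_s$ is continuous. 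This makes the inverse system legitimate and its limit a closed (hence compact) subspace of the product.

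For the inclusion $D\subseteq\underleftarrow{lim}$, I would take any $x\in X$ and check that the thread $(r_s(x))_{s\in\Gamma}$ is compatible: for $s\leq t$ we need $r^t_s(r_t(x))=r_s(x)$, i.e. $r_s(r_t(x))=r_s(x)$, which is exactly $r_s\circ r_t=r_s$ from $(ii)$. So every such thread lies in the limit. For the reverse inclusion, let $(y_s)_{s\in\Gamma}\in\underleftarrow{lim}$, so $y_s\in r_s(X)$ and $r_s(y_t)=y_s$ whenever $s\leq t$. I would use compactness of $X$: pick a cluster point $x\in X$ of the net $(y_s)_{s\in\Gamma}$ along the up-directed order. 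The goal is then to show $y_s=r_s(x)$ for every $s\in\Gamma$. Fix $s$; for every $t\geq s$ we have $r_s(y_t)=y_s$, and since the tail $\{t:t\geq s\}$ is cofinal, $x$ is also a cluster point of $(y_t)_{t\geq s}$; applying the continuous map $r_s$ and using that $r_s(y_t)=y_s$ is constant on this tail, we get $r_s(x)$ as a cluster point of the constant net $y_s$, hence $r_s(x)=y_s$. This gives $(y_s)_{s\in\Gamma}=(r_s(x))_{s\in\Gamma}\in D$.

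The main obstacle is the passage through the cluster point in the reverse inclusion: one must be careful that a cluster point of the whole net is simultaneously handled at every fixed coordinate $s$, which works precisely because $\{t\in\Gamma:t\geq s\}$ is cofinal in the up-directed set $\Gamma$ and because $r_s$ is continuous, so it carries cluster points to cluster points. (Alternatively, one can invoke condition $(iv)$, $x=\lim_{s\in\Gamma}r_s(x)$, together with $(iii)$ to identify the limit, but the cofinality-plus-continuity argument is cleaner and needs only $(ii)$ and compactness.) A final routine remark is that $D$ is exactly the image of $X$ under the canonical map $x\mapsto(r_s(x))_{s\in\Gamma}$, so the statement also records that this canonical map is a surjection of $X$ onto the inverse limit; its injectivity (hence that it is a homeomorphism) would follow from $(iv)$ since if $r_s(x)=r_s(x')$ for all $s$ then $x=\lim_s r_s(x)=\lim_s r_s(x')=x'$, but that is more than the stated equality requires.
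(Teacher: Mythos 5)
Your argument is correct. Note that the paper itself gives no proof of Lemma \ref{cen1} --- it is only cited as implicit in \cite{kubis1} --- so there is nothing to compare against; your write-up supplies a complete verification. The only points worth making explicit are the ones you already flag: the forward inclusion and the coherence of the bonding maps are immediate from condition $(ii)$, and in the reverse inclusion the tail $\{t\in\Gamma: t\geq s\}$ is residual in the up-directed set $\Gamma$, so a cluster point $x$ of the whole net $(y_t)_{t\in\Gamma}$ (which exists by compactness) remains a cluster point of the restricted net, and continuity of $r_s$ together with Hausdorffness forces $r_s(x)=y_s$ from the constant net $r_s(y_t)=y_s$.
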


\begin{lemma}[\cite{kubis1}]\label{salceno}
Let $X$ be a compact space. If $X$ admits an $r$-skeleton $\{r_s:s\in \Gamma\}$,  then  $X=\underleftarrow{lim}\langle r_s(X),r^{t}_s ,\Gamma\rangle$, where $r^t_s=r_s\upharpoonright_{r_t(X)}$, for each $t\geq s$. In particular, if $A$ is a countable directed subset of $\Gamma$, then $$r_{\sup(A)}(X)=\underleftarrow{lim}\langle r_{s}(X),r^t_s,A\rangle.$$

\end{lemma}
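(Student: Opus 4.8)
The plan is to realise $X$ as the inverse limit by showing that the canonical evaluation map
\[
\Phi\colon X\longrightarrow \underleftarrow{lim}\langle r_s(X),r^{t}_s,\Gamma\rangle,\qquad \Phi(x)=(r_s(x))_{s\in\Gamma},
\]
is a homeomorphism, and then running a parallel argument on each image $r_{\sup(A)}(X)$. First I would check that $\Phi$ is well defined and continuous: for $s\le t$ the commuting relation $(ii)$ of Definition \ref{skeleton} gives $r^{t}_s(r_t(x))=r_s(r_t(x))=r_s(x)$, so $\Phi(x)$ is a thread, and since each coordinate $x\mapsto r_s(x)$ is continuous, $\Phi$ is continuous. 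Surjectivity of $\Phi$ is exactly Lemma \ref{cen1}, which identifies the inverse limit with $\{(r_s(x))_{s\in\Gamma}:x\in X\}$. For injectivity, if $r_s(x)=r_s(y)$ for every $s\in\Gamma$, then condition $(iv)$ yields $x=\lim_{s\in\Gamma}r_s(x)=\lim_{s\in\Gamma}r_s(y)=y$. As $X$ is compact and the inverse limit is Hausdorff (being a subspace of a product of cosmic spaces), $\Phi$ is a homeomorphism, which gives $X=\underleftarrow{lim}\langle r_s(X),r^{t}_s,\Gamma\rangle$.

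For the ``in particular'' part, let $A$ be a countable directed subset of $\Gamma$. I would first note that $A$ contains a cofinal increasing sequence $\langle b_n\rangle_{n<\omega}$ (enumerate $A$ and thin it using directedness), so that $u:=\sup(A)=\sup_{n<\omega}b_n$ exists in $\Gamma$ by $\sigma$-completeness. Then consider
\[
\Psi\colon r_u(X)\longrightarrow \underleftarrow{lim}\langle r_s(X),r^{t}_s,A\rangle,\qquad \Psi(z)=(r_s(z))_{s\in A}.
\]
As above, $(ii)$ makes $\Psi$ well defined, and it is continuous and defined on a compact space. For injectivity, if $r_s(z)=r_s(z')$ for all $s\in A$, then applying the continuity condition $(iii)$ along $\langle b_n\rangle$ and using that $z,z'\in r_u(X)$ are fixed by $r_u$ gives $z=r_u(z)=\lim_n r_{b_n}(z)=\lim_n r_{b_n}(z')=r_u(z')=z'$.

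The substantive step is the surjectivity of $\Psi$. Given a thread $(z_s)_{s\in A}$, I would take a cluster point $x\in X$ of the sequence $\langle z_{b_n}\rangle_{n<\omega}$ (compactness of $X$). For a fixed $s\in A$ one has $r_s(z_{b_n})=z_s$ for all large $n$, since $\langle b_n\rangle$ is increasing and cofinal in $A$; hence continuity of $r_s$ forces $r_s(x)=z_s$, and in particular $r_{b_n}(x)=z_{b_n}$ for every $n$. Then condition $(iii)$ gives $r_u(x)=\lim_n r_{b_n}(x)=\lim_n z_{b_n}$, so setting $z:=r_u(x)\in r_u(X)$ and using $(ii)$ (with $s\le u$) we get $r_s(z)=r_s(r_u(x))=r_s(x)=z_s$ for every $s\in A$, i.e. $\Psi(z)=(z_s)_{s\in A}$. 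Thus $\Psi$ is a continuous bijection from a compact space onto a Hausdorff space, hence a homeomorphism, and $r_{\sup(A)}(X)=\underleftarrow{lim}\langle r_s(X),r^{t}_s,A\rangle$.

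The main obstacle is precisely this surjectivity step: one must produce a point of $r_u(X)$ realising a prescribed thread, and the only available passage from the countable data $\langle z_{b_n}\rangle$ to a genuine point is compactness (to extract a cluster point) combined with the continuity axiom $(iii)$ at the supremum $u$. A minor but necessary technical point, dealt with above, is that $\sigma$-completeness is stated only for increasing sequences, so one must first replace the directed set $A$ by a cofinal increasing sequence before invoking it.
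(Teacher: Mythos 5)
Your argument is correct and complete: the canonical evaluation map into the inverse limit is a continuous bijection from a compact space onto a Hausdorff space (surjectivity being Lemma \ref{cen1}, injectivity coming from condition $(iv)$), and for the countable case you correctly reduce a countable directed set to a cofinal increasing sequence before invoking $\sigma$-completeness and condition $(iii)$, with the surjectivity of $\Psi$ obtained from a cluster point of the thread along that sequence. The paper itself gives no proof of this lemma --- it is quoted from \cite{kubis1} --- so there is nothing to compare against; your proof is the standard argument one would expect, and the two delicate points you flag (realising a thread by a point of $r_{\sup(A)}(X)$, and the fact that $\sigma$-completeness only applies to increasing sequences) are exactly the ones that need care and are handled properly.
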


Next, we state the main result.

\begin{theorem}\label{teoprin}
Let $X$ be a compact space. Then, $X$ admit an $r$-skeleton iff admit a $\pi$-skeleton.
\end{theorem}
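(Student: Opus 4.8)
The plan is to prove the two implications separately, using the inverse-limit machinery of Lemmas \ref{cen1} and \ref{salceno} together with the embedding flexibility granted by Lemma \ref{obs12}.

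For the direction ``$r$-skeleton $\Rightarrow$ $\pi$-skeleton'', suppose $X$ carries an $r$-skeleton $\{r_s:s\in\Gamma\}$. By Lemma \ref{salceno}, $X=\underleftarrow{\lim}\langle r_s(X),r^t_s,\Gamma\rangle$, so the natural map $e:X\to\prod_{s\in\Gamma}r_s(X)$, $e(x)=(r_s(x))_{s\in\Gamma}$, is an embedding onto the inverse limit (Lemma \ref{cen1}), and each factor $r_s(X)$ is a cosmic compact space. I would then set $F_s:=e(r_s(X))$ — more precisely, $F_s$ is the image under $e$ of the set $\{x\in X : r_s(x)=x\}=r_s(X)$, viewed inside the product — and let $\psi(s):=\{t\in\Gamma : t\le s\}$, which need not be countable, so instead I would first invoke Theorem \ref{numerables} to replace $\Gamma$ by $[Y]^{\le\omega}$ (where $Y$ is the induced space) and use Lemma \ref{rey1}/$\omega$-monotonicity to arrange that the index set and the down-sets behave countably; concretely take $\psi(A)=\{B\in[Y]^{\le\omega}:B\subseteq A\}$ after a cofinal-restriction so this is countable, or more cleanly index the product by $Y$ itself via the coordinatewise evaluation. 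The point is: $\bigcup_s F_s$ corresponds to $\bigcup_s r_s(X)=Y$, which is dense in $X$, giving $(b)$; the monotonicity $r_s=r_s\circ r_t$ for $s\le t$ gives $(a)$; and the compatibility of the bonding maps shows $\pi_{\psi(s)}\restriction_{F_s}$ is injective with $\pi_{\psi(s)}(F_s)=\pi_{\psi(s)}(e(X))$, because the $s$-coordinate of $e(x)$ already determines $r_s(x)$ and hence all coordinates indexed by $t\le s$. Since the ambient product is of cosmic (not necessarily $[0,1]$) spaces, I finish by applying Lemma \ref{obs12} to transport the pair to an embedding into a cube.

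For the converse ``$\pi$-skeleton $\Rightarrow$ $r$-skeleton'', suppose $X\subseteq[0,1]^T$ has a $\pi$-skeleton generated by $(\{F_s:s\in\Gamma\},\psi)$. For each $s\in\Gamma$, since $\pi_{\psi(s)}\restriction_{F_s}$ is a homeomorphism with $\pi_{\psi(s)}(F_s)=\pi_{\psi(s)}(X)$, the composition $r_s:=\big(\pi_{\psi(s)}\restriction_{F_s}\big)^{-1}\circ\pi_{\psi(s)}:X\to F_s\subseteq X$ is a well-defined continuous retraction onto $F_s$ with $r_s(X)=F_s$ cosmic, giving $(i)$. Condition $(a)$ plus the fact that $\psi(s)\subseteq\psi(t)$ for $s\le t$ yields $(ii)$: for $x\in X$, $r_t(x)$ and $x$ agree on $\psi(t)\supseteq\psi(s)$, so $r_s(r_t(x))=r_s(x)$, and since $r_s(x)\in F_s\subseteq F_t$ is fixed by $r_t$ we also get $r_t(r_s(x))=r_s(x)=r_s(r_s(x))$... wait — this last needs $\psi(t)$-coordinates of $r_s(x)$ to match those of $x$, which is false in general, so $(ii)$ requires more care: I would instead argue $r_t\circ r_s=r_s$ directly from $r_s(x)\in F_s\subseteq F_t$ and $r_s(x)$ agreeing with itself on $\psi(t)$ forces $r_t(r_s(x))$ to be the unique point of $F_t$ with those $\psi(t)$-coordinates, namely $r_s(x)$ if $r_s(x)$'s $\psi(t)$-coordinates lie in $\pi_{\psi(t)}(F_t)$, which holds since $r_s(x)\in F_t$. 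For $(iii)$, given an increasing sequence with $s=\sup s_n$, $\omega$-monotonicity gives $\psi(s)=\bigcup_n\psi(s_n)$, and an argument analogous to Claim 2 in the proof of Proposition \ref{Valdiviapies} (using that the induced space is Fréchet–Urysohn, or a direct net/coordinate argument on the countable union $\psi(s)$) shows $r_s(x)=\lim_n r_{s_n}(x)$. Finally $(iv)$ follows from Proposition \ref{rey2} applied to the family satisfying $(i)$–$(iii)$, since $(b)$ says $\bigcup_s F_s=\bigcup_s r_s(X)$ is dense, so $X=\overline{\bigcup_s r_s(X)}$ and $x=\lim_s r_s(x)$ for every $x\in X$.

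The main obstacle I expect is the careful verification of the commutativity/idempotence-type condition $(ii)$ in the converse direction, together with making the index reparametrization in the forward direction genuinely $\sigma$-complete and $\omega$-monotone rather than just monotone — in particular ensuring that $\psi$ maps into $[T]^{\le\omega}$ and that suprema of countable chains in the new index set are handled, which is exactly the subtlety flagged in the remark preceding Theorem \ref{principal}. Both issues are resolved by first passing, via Theorem \ref{numerables} and Lemma \ref{rey1}, to an $r$-skeleton indexed by countable subsets of the induced space and then reading the $\pi$-skeleton data off the inverse-limit representation of Lemma \ref{salceno}; the delicate point is checking that the resulting $\psi$ respects countable suprema, for which I would use that $r_{\sup A}(X)=\underleftarrow{\lim}\langle r_s(X),r^t_s,A\rangle$ from Lemma \ref{salceno}.
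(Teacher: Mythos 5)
Your overall strategy coincides with the paper's: represent $X$ as the inverse limit $\{(r_s(x))_{s\in\Gamma}:x\in X\}\subseteq\Pi_{s\in\Gamma}r_s(X)$ via Lemmas \ref{cen1} and \ref{salceno} and transport into a cube through Lemma \ref{obs12} for one direction, and define $r_s:=(\pi_{\psi(s)}\upharpoonright_{F_s})^{-1}\circ\pi_{\psi(s)}$ for the other. Your converse direction is essentially complete and matches the paper: the verification of $(ii)$ you arrive at after your self-correction ($\pi_{\psi(s)}(r_t(x))=\pi_{\psi(s)}(x)$ plus injectivity of $\pi_{\psi(s)}$ on $F_s$ gives $r_s\circ r_t=r_s$, while $r_t\upharpoonright_{F_t}=\mathrm{id}$ gives $r_t\circ r_s=r_s$) is exactly the paper's argument, the finite-coordinate argument you sketch for $(iii)$ is the paper's Claim 2, and $(iv)$ via Proposition \ref{rey2} and density is also the paper's route.

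The genuine gap is in the forward direction: you never produce a legitimate $\omega$-monotone $\psi$ with countable values. Your first candidate $\psi(s)=\{t\in\Gamma:t\le s\}$ is, as you note, generally uncountable; but your proposed repair $\psi(A)=\{B\in[Y]^{\le\omega}:B\subseteq A\}$ is also uncountable (it has cardinality $2^{\omega}$ for any infinite $A$), and ``after a cofinal-restriction so this is countable'' and ``index the product by $Y$ itself'' are gestures rather than constructions. The paper's resolution, which your closing sentence circles without ever stating, is: keep the ambient product $\Pi_{s\in\Gamma}r_s(X)$ indexed by the \emph{original} $\Gamma$, but re-index the family by $\Gamma':=$ the countable \emph{directed} subsets $A$ of $\Gamma$, and put $F_A:=r_{\sup(A)}(X)$ and $\varphi(A):=A\in[\Gamma]^{\le\omega}$. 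Then $\varphi$ is trivially $\omega$-monotone and countably valued, $\pi_A(F_A)=\{(r_s(r_{\sup(A)}(x)))_{s\in A}:x\in X\}=\{(r_s(x))_{s\in A}:x\in X\}=\pi_A(X)$ by condition $(ii)$ of the $r$-skeleton, and injectivity of $\pi_A$ on $F_A$ is precisely the statement $r_{\sup(A)}(X)=\underleftarrow{lim}\langle r_s(X),r^t_s,A\rangle$ of Lemma \ref{salceno} that you quote at the end. Without this (or an equivalent) explicit choice of index set and $\psi$, the forward implication is not actually proved; with it, your outline closes up and is the paper's proof.
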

\begin{proof}
First, we shall prove the necessity. Let $\{r_s:s\in \Gamma\}$ be an $r$-skeleton on $X$ and ,by  Corollary \ref{cen1} and Lemma \ref{salceno}, we may assume that   $X=\underleftarrow{lim}\langle r_s(X),r^{t}_s ,\Gamma\rangle= \{(r_s(x))_{s\in \Gamma}:x\in X\}$,  where $r^t_s=r_s\upharpoonright_{r_t(X)}$, for each $t\geq s$.  We shall prove that $X$ admits a $\pi$-skeleton.  Let $\Gamma'$ be  the family of  countable  directed subsets of $\Gamma$. Observe that $\Gamma'$ is a  $\sigma$-complete up-directed partially ordered set with the order given by set containment. For each $A\in \Gamma'$, let $F_A:=r_{sup(A)}(X)$ and  define $\varphi:\Gamma'\rightarrow [\Gamma]^{\leq\omega}$   by $\varphi(A)=A$, for each $A\in \Gamma'$. It is easy to see that the function $\varphi$ is $\omega$-monotone and the family  $\{F_A:A\in \Gamma'\}$ of cosmic subspaces   of $X$ satisfies $(a)$.  We note that $\bigcup_{A\in \Gamma'}r_{sup(A)}(X)$  is the induced set by the $r$-skeleton on $X$ because of $\{\{s\}:s\in \Gamma\}\subseteq\Gamma'$, and we deduce  that $\{F_A:A\in \Gamma'\}$  satisfies  property $(b)$. Let $A\in \Gamma'$. By using property $(ii)$ of the $r$-skeleton definition, we obtain that
 $$\pi_A(F_A)=\{(r_s(r_{sup(A)}(x)))_{s\in A}:x\in X\}=\{(r_s(x))_{s\in A}:x\in X\}=\pi_A(X).$$
 Hence, $\pi_{A}(X)=\pi_A(F_A)$.  Now, by Corollary \ref{cen1} and Lemma \ref{salceno}, we know that $F_A=r_{sup(A)}(X)=\underleftarrow{lim}\langle r_s(X),r^t_s,A\rangle=\{(r_s(x))_{s\in A}:x\in F_A\}$, it then  follows that $\pi_A\upharpoonright_{F_A}$ is one-to-one. Thus, we conclude that  $\pi_A\upharpoonright_{F_A}$ induces an homeomorphism between $F_A$ and $\pi_A(X)$. Hence, the pair  $(\{F_A:A\in \Gamma'\},\varphi)$ satisfies  the hypothesis of Lemma \ref{obs12} and so $X$ admits  a $\pi$-skeleton.

 \medskip

Now, we shall prove the sufficiency.  Assume that   $X\subseteq [0,1]^{T}$ and    $(\{F_s:s\in \Gamma\},\varphi)$ is  a $\pi$-skeleton on $X$ with induced space  $Y$. For each $A\in [Y]^{\leq\omega}$, let $r_A:X\rightarrow F_A$  be the function defined by $r_A:=\pi^{-1}_{\varphi(A)}\upharpoonright_{F_A}\circ\pi_{\varphi(A)}$. As $\pi_{\varphi(A)}(F_A)$ is homeomorphic to $F_A$ and $\pi_{\varphi(A)}(X)\subseteq \pi_{\varphi(A)}(F_A)$, we have that $r_A$ is a retraction.
 We assert  that $\{r_A:A\in [Y]^{\leq\omega}\}$ is an $r$-skeleton on $X$. Indeed,   condition $(i)$ is satisfied because $r_A(X)=F_A$ is homeomorphic to $\pi_{\varphi(A)}(F_A)$ and $\pi_{\varphi(A)}(F_A)$ is a cosmic space. For the conditions $(ii)$ and $(iii)$ we consider the next claim.\\

\begin{claim}{1} For every $A\in [Y]^{\leq \omega}$, $\pi_{\varphi(A)}(r_A(x))=\pi_{\varphi(A)}(x)$ for all $x\in X$.
\end{claim}

\medskip

\begin{proofclaim} Fix $A\in [Y]^{\leq \omega}$. By the definition of $r_A$,  we have that  $$\pi_{\varphi(A)}(r_A(x))=\pi_{\varphi(A)}(\pi^{-1}_{\varphi(A)}\upharpoonright_{F_A}(\pi_{\varphi(A)}(x)))=(\pi_{\varphi(A)}\circ\pi^{-1}_{\varphi(A)}\upharpoonright_{F_A})\circ\pi_{\varphi(A)}(x)=\pi_{\varphi(A)}(x),$$
for each $x\in X$.
\end{proofclaim}

 Let us prove condition $(ii)$. Fix  $A, B\in [Y]^{\leq\omega}$ such that $A\subseteq B$ and  fix  $x\in X$.   Since   $r_A(x)\in F_A\subseteq F_B$ and $r_B$ is the identity on $F_B$, we have that $r_B(r_A(x))=r_A(x)$. Now, we shall prove that $r_A(x)=r_A(r_B(x))$.  By  Claim 1, $\pi_{\varphi(B)}(x)=\pi_{\varphi(B)}(r_B(x))$ and  $\pi_{\varphi(A)}(r_A(r_B(x)))=\pi_{\varphi(A)}(r_B(x))$, for each $x\in X$. As $\varphi(A)\subseteq \varphi
(B)$, we have that $\pi_{\varphi(A)}(r_A(r_B(x)))=\pi_{\varphi(A)}(r_B(x))=\pi_{\varphi(A)}(x)$. On the other hand, according  to Claim 1, we have that  $\pi_{\varphi(A)}(r_A(x))=\pi_{\varphi(A)}(x)$. Hence, $\pi_{\varphi(A)}(r_A(r_B(x)))=\pi_{\varphi(A)}(r_A(x))$. Since $\pi_{\varphi(A)}$ is injective on $F_A$, we  conclude that $r_A(x)= r_A(r_B(x))$. Thus, $r_A=r_A\circ r_B= r_B\circ r_A$  and $(ii)$ hold. To prove condition $(iii)$ we consider the next claim.\\

In what follows, $\mathcal{B}$ will be a countable base of $[0,1]$.

\medskip

\begin{claim}{2}\label{4obs} If  $A\in [Y]^{\leq\omega}$, then
 $$\{(\pi_{\varphi(A)}\upharpoonright_{F_A})^{-1}(\bigcap_{t\in G}[t,V_t]\cap \pi_{\varphi(A)}(F_A)): G\in [\varphi(A)]^{<\omega}\mbox{ and } V_t\in \mathcal{B}\}$$ is base for $F_A$.
\end{claim}
\medskip
\begin{proofclaim}
It is evident that  $\{\bigcap_{t\in G}[t,V_t]: G\in [\varphi(A)]^{<\omega}\mbox{ and } V_t\in \mathcal{B}\}$ is a base of the space $[0,1]^{\varphi(A)}$ and, so $\{\bigcap_{t\in G}[t,V_t]\cap \pi_{\varphi(A)}(F_A): G\in [\varphi(A)]^{<\omega}\mbox{ and } V_t\in \mathcal{B}\}$ is a base for the space  $\pi_{\varphi(A)}(F_A)$. Since $\pi_{\varphi(A)}\upharpoonright_{F_A}$ is an homeomorphism, we obtain that  $\{(\pi_{\varphi(A)}\upharpoonright_{F_A})^{-1}(\bigcap_{t\in G}[t,V_t]\cap \pi_{\varphi(A)}(F_A)): G\in [\varphi(A)]^{<\omega}\mbox{ and } V_t\in \mathcal{B}\}$ is a base for $F_A$.
 \end{proofclaim}

For the condition $(iii)$, we consider an increasing sequence $\langle A_n\rangle_{n<\omega}\subseteq [Y]^{\leq\omega}$. Put $A=\bigcup_{n<\omega}A_n$ and fix $x\in X$. By Claim 2, we may  choose $G\in [\varphi(A)]^{<\omega}$ and $\{V_t:t\in G\}\subseteq \mathcal{B} $ such that $r_A(x)\in (\pi_{\varphi(A)}\upharpoonright_{F_A})^{-1}(\bigcap_{t\in G}[t,V_t]\cap\pi_{\varphi(A)}(F_A))$.  Since $\varphi(A)=\bigcup_{n<\omega}\varphi(A_n)$, there is $n_0<\omega$ such that $G\subseteq \varphi(A_n)$, for each $n\geq n_0$. According to Claim 1, we know that $\pi_{\varphi(A)}(r_A(x))=\pi_{\varphi(A)}(x)$ and $\pi_{\varphi(An)}(r_{A_n}(x))=\pi_{\varphi(An)}(x)$, for each $n\geq n_0$. In particular, as $G\subseteq \varphi(A_n)\subseteq \varphi(A)$, $\pi_{G}(r_A(x))=\pi_{G}(x)=\pi_{G}(r_{A_n}(x))$, for each $n\geq n_0$. Since $r_A(x)\in (\pi_{\varphi(A)}\upharpoonright_{F_A})^{-1}(\bigcap_{t\in G}[t,V_t]\cap\pi_{\varphi(A)}(F_A))$, it then follows that  $\pi_t(r_A(x))\in V_t$, for each $t\in G$. Using that $\pi_{G}(r_A(x))=\pi_{G}(x)=\pi_{G}(r_{A_n}(x))$, we have that $\pi_t(r_{A_n}(x))\in V_t$, for each $t\in G$. Hence, for every $n\geq n_0$, $r_{A_n}(x)\in \bigcap_{t\in G}[t,V_t]$ and thus we have that condition $(iii)$ is satisfied.

\medskip

For to establish the condition $(iv)$, we note that $r_A(X)=F_A$ and $\overline{\bigcup_{A\in [Y]^{\leq\omega}}r_A(X)}=\overline{\bigcup_{A\in [Y]^{\leq\omega}}F_A}=X$.
Therefore, we have that $\{r_A:A\in [Y]^{\leq\omega}\}$ is an $r$-skeleton on $X$.
\end{proof}

Following the definition of  the $r$-skeletons, we say that a $\pi$-skeleton on a compact space $X$ is {\it full} if its induced space is  the space $X$. Hence, we have the next consequence of the previous theorem.
\begin{corollary}
Let $X$ be a compact space. $X$ is a Corson space iff $X$ admits a full $\pi$-skeleton.
\end{corollary}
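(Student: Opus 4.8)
The plan is to deduce this corollary from the equivalence in Theorem \ref{teoprin} together with the result of \cite{cuth1} that a compact space is Corson if and only if it has a full $r$-skeleton (fullness in the sense of Definition \ref{skeleton}). Concretely, I would check that the two constructions in the proof of Theorem \ref{teoprin} send full skeletons to full skeletons: the construction in the necessity part turns a full $r$-skeleton into a full $\pi$-skeleton, and the one in the sufficiency part turns a full $\pi$-skeleton into a full $r$-skeleton. Granting this, the chain ``$X$ is Corson $\Leftrightarrow$ $X$ has a full $r$-skeleton $\Leftrightarrow$ $X$ has a full $\pi$-skeleton'' is immediate.

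For the direction from a full $r$-skeleton to a full $\pi$-skeleton, I would start with a full $r$-skeleton $\{r_s:s\in\Gamma\}$, so that $\bigcup_{s\in\Gamma}r_s(X)=X$, and follow the necessity argument of Theorem \ref{teoprin}: after identifying $X$ with $\underleftarrow{lim}\langle r_s(X),r^t_s,\Gamma\rangle$ via Lemmas \ref{cen1} and \ref{salceno}, one sets $F_A:=r_{\sup A}(X)$ for $A$ ranging over the countable directed subsets of $\Gamma$. The induced space of this $\pi$-skeleton is $\bigcup_A F_A=\bigcup_A r_{\sup A}(X)$; since every singleton $\{s\}$ is a countable directed subset of $\Gamma$ with $\sup\{s\}=s$, this union contains $\bigcup_{s\in\Gamma}r_s(X)=X$, and being a subset of $X$ it equals $X$. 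Passing to $[0,1]^{T'}$ via Lemma \ref{obs12} replaces each $F_A$ by $g(F_A)$ for an embedding $g$, so the induced space becomes $g\big(\bigcup_A F_A\big)=g(X)$, which is all of the ambient compact space $g(X)$; thus the resulting $\pi$-skeleton is full.

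For the converse, I would take a full $\pi$-skeleton $(\{F_s:s\in\Gamma\},\varphi)$, so its induced space $Y=\bigcup_{s\in\Gamma}F_s$ equals $X$. Choosing for each $y\in Y$ an index $s_y$ with $y\in F_{s_y}$ and applying Lemma \ref{rey1} to reindex the $\pi$-skeleton over $[Y]^{\leq\omega}$ (using property $(a)$ to keep the inclusions and the $\omega$-monotonicity of the support map intact), one is in the situation of the sufficiency part of Theorem \ref{teoprin}, which produces an $r$-skeleton $\{r_A:A\in[Y]^{\leq\omega}\}$ with $r_A(X)=F_A$. Its induced space $\bigcup_{A\in[Y]^{\leq\omega}}F_A$ is contained in $Y$ because each $F_A$ is one of the original $F_s$, and it contains $Y$ because $y\in F_{\{y\}}$ for each $y\in Y$; hence it equals $Y=X$, so the $r$-skeleton is full and $X$ is Corson by \cite{cuth1}.

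The only non-formal part of the argument is the bookkeeping in these two reindexings, and I expect the main obstacle to be the second one: one must check that replacing the index set $\Gamma$ by $[Y]^{\leq\omega}$ through the $\omega$-monotone map of Lemma \ref{rey1} preserves conditions $(a)$--$(c)$ of being a $\pi$-skeleton and does not enlarge the induced space beyond $Y$. Once this is verified, everything else is a direct appeal to Theorem \ref{teoprin} and to \cite{cuth1}.
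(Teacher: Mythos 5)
Your proposal is correct and follows essentially the same route as the paper: the paper also deduces the corollary from the characterization of Corson compacta via full $r$-skeletons in \cite{cuth1} together with the observation that the two constructions in the proof of Theorem \ref{teoprin} preserve the induced space, hence carry full skeletons to full skeletons. Your additional bookkeeping (singletons as directed subsets, $r_A(X)=F_A$) is exactly the verification the paper leaves implicit.
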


\begin{proof}
In \cite{cuth1}, the authors proved that $X$ is a Corson space iff $X$ admits a full $r$-skeleton. From the proof of Theorem \ref{teoprin} we have that the  spaces induced by the $r$-skeleton and by a $\pi$-skeleton coincided. Hence, we deduce that $X$ admits a full $r$-skeleton iff admits a full $\pi$-skeleton.
\end{proof}

Next, we will give a proof of the stability of  $\pi$-skeletons under the product of spaces with  a $\pi$-skeleton.

\begin{theorem}\label{preserprod}
The product of spaces with $\pi$-skeleton admit a $\pi$-skeleton.
\end{theorem}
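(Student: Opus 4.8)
The plan is to realise the product concretely inside a power of $[0,1]$ and to build the $\pi$-skeleton coordinatewise. Write $\{X_i:i\in I\}$ for the given family, and for each $i$ fix an embedding $X_i\subseteq[0,1]^{T_i}$ together with a $\pi$-skeleton generated by a pair $(\{F^i_s:s\in\Gamma_i\},\psi_i:\Gamma_i\to[T_i]^{\leq\omega})$; after relabelling we may take the $T_i$ pairwise disjoint, so that $X:=\prod_{i\in I}X_i\subseteq[0,1]^{T}$ with $T:=\bigcup_{i\in I}T_i$. Let $Y_i=\bigcup_{s\in\Gamma_i}F^i_s$ be the induced space of $X_i$; since $Y_i$ is dense in $X_i$, I would pick $e_i\in Y_i$ and $\theta_i\in\Gamma_i$ with $e_i\in F^i_{\theta_i}$. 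I will also use the elementary remark that in any $\pi$-skeleton on a compact space the pieces $F^i_s$ are \emph{compact}: by condition $(c)$ each $F^i_s$ is homeomorphic to $\pi_{\psi_i(s)}(X_i)$, a continuous image of the compact $X_i$ inside a metrizable cube, hence compact (indeed compact metrizable).

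As index set I take $\Gamma:=\{\sigma:\mathrm{dom}(\sigma)\in[I]^{\leq\omega},\ \sigma(j)\in\Gamma_j\text{ and }\sigma(j)\geq\theta_j\text{ for all }j\in\mathrm{dom}(\sigma)\}$, ordered by $\sigma\leq\tau$ iff $\mathrm{dom}(\sigma)\subseteq\mathrm{dom}(\tau)$ and $\sigma(j)\leq\tau(j)$ for every $j\in\mathrm{dom}(\sigma)$. Using that each $\Gamma_j$ is $\sigma$-complete and up-directed, one checks routinely that $\Gamma$ is a $\sigma$-complete up-directed partially ordered set (the supremum of an increasing sequence $\langle\sigma_n\rangle_{n<\omega}$ has domain the union of the $\mathrm{dom}(\sigma_n)$ and, on each coordinate $j$, the supremum of the eventual tail $\langle\sigma_n(j)\rangle$). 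For $\sigma\in\Gamma$ put
$$F_\sigma:=\Big\{x\in X:\pi_{T_j}(x)\in F^j_{\sigma(j)}\text{ for all }j\in\mathrm{dom}(\sigma),\text{ and }\pi_{T_i}(x)=e_i\text{ for all }i\in I\setminus\mathrm{dom}(\sigma)\Big\},$$
so that $F_\sigma$ is homeomorphic to the countable product $\prod_{j\in\mathrm{dom}(\sigma)}F^j_{\sigma(j)}$, hence a cosmic (in fact compact metrizable) space; and define $\psi:\Gamma\to[T]^{\leq\omega}$ by $\psi(\sigma):=\bigcup_{j\in\mathrm{dom}(\sigma)}\psi_j(\sigma(j))$, which is $\omega$-monotone because each $\psi_j$ is and the two countable unions involved in a sup can be interchanged.

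It then remains to verify $(a)$, $(b)$, $(c)$ for $(\{F_\sigma:\sigma\in\Gamma\},\psi)$. Condition $(a)$ follows coordinatewise from condition $(a)$ of the $X_j$, the only point to watch being a coordinate $i\in\mathrm{dom}(\tau)\setminus\mathrm{dom}(\sigma)$, where one needs $e_i\in F^i_{\tau(i)}$ — which holds precisely because $\tau(i)\geq\theta_i$ forces $e_i\in F^i_{\theta_i}\subseteq F^i_{\tau(i)}$. For $(b)$, a basic open subset of $X$ constrains only finitely many coordinates $i\in K$; for each such $i$ use density of $Y_i$ to pick a point of the corresponding open set lying in some $F^i_{s_i}$ with $s_i\geq\theta_i$, set $\mathrm{dom}(\sigma)=K$ and $\sigma(i)=s_i$, and fill the remaining coordinates with the $e_i$'s to land in $F_\sigma$. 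For $(c)$, since the $T_i$ are disjoint one has $\psi(\sigma)\cap T_j=\psi_j(\sigma(j))$ for $j\in\mathrm{dom}(\sigma)$ and $\psi(\sigma)\cap T_i=\emptyset$ otherwise, so both the equality $\pi_{\psi(\sigma)}(F_\sigma)=\pi_{\psi(\sigma)}(X)$ and the injectivity of $\pi_{\psi(\sigma)}\upharpoonright_{F_\sigma}$ reduce to condition $(c)$ of the factors $X_j$ (the frozen coordinates contribute a single point to $\psi(\sigma)$-images and hurt neither surjectivity nor injectivity); finally a continuous bijection from the compact space $F_\sigma$ onto the Hausdorff space $\pi_{\psi(\sigma)}(X)$ is automatically a homeomorphism. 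Hence $(\{F_\sigma:\sigma\in\Gamma\},\psi)$ is a $\pi$-skeleton on $X$.

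The main obstacle is exactly the bookkeeping around the "inactive" coordinates: a $\pi$-skeleton element can refer to only countably many coordinates of $[0,1]^T$, so the coordinates $i\notin\mathrm{dom}(\sigma)$ must be frozen at a single point $e_i$, and the $e_i$ together with the thresholds $\theta_i$ have to be chosen so that freezing is simultaneously compatible with the monotonicity $(a)$ and with the density $(b)$ — taking $e_i$ in the induced space $Y_i$ and activating a coordinate only at levels above $\theta_i$ is what reconciles the two. The other mildly delicate ingredient is the observation that the pieces $F^i_s$ of a $\pi$-skeleton on a compact space are automatically compact, which is what upgrades the continuous bijection in $(c)$ to a genuine homeomorphism; everything else is a routine coordinatewise verification.
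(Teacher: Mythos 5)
Your proof is correct and follows essentially the same route as the paper: index by countable partial choice functions into the $\Gamma_i$'s, freeze the inactive coordinates at a fixed point of $\prod_{i\in I}Y_i$, take products of the factor pieces on the active coordinates, and let $\psi$ be the union of the factor assignments. The only difference is cosmetic: the paper guarantees that a newly activated coordinate's piece contains the frozen point by building that requirement into its order $\leq_p$, whereas you achieve the same effect by restricting the index set to choice functions dominating fixed thresholds $\theta_j$ with $e_j\in F^{j}_{\theta_j}$ --- a slightly cleaner bookkeeping of the same idea.
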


\begin{proof}
Let $\{X_i:i\in I\}$ be a family of compact spaces which admit  $\pi$-skeletons. For each $i\in I$, let $T_i$ be such that $X_i\subseteq [0,1]^{T_i}$,   $\mathcal{F}_i=\{F^{i}_s: s\in \Gamma_i\}$ be a family of cosmic closed subspaces of $X_i$ with induced space $Y_i$ and $\varphi_i:\Gamma_i\rightarrow [T_i]^{\leq\omega}$ be the $\omega$-monotone function such that $(\mathcal{F}_i,\varphi_i)$   is a $\pi$-skeleton on $X_i$.  Without loss of  generality, we suppose that the sets $T_i$'s are pairwise disjoints. For each $i\in I$, let $\leq_{i}$ be the order on $\Gamma_i$ and for every $A\subseteq I$ and $S_1, S_2\in \Pi_{i\in J}\Gamma_i$,   $S_1\leq_{A} S_2$ will mean that $\pi_{i}(S_1)\leq_{i} \pi_{i}(S_2)$, for every $i\in A$.

\medskip

To prove that $\Pi_{i\in I}X_i$ admit a $\pi$-skeleton we fix $y\in \Pi_{i\in I}Y_i$ which  will help us in the process of the proof. Define  $\Gamma:=\{(A,S):A\in [I]^{\leq \omega} \mbox{ and } S\in \Pi_{i\in A}\Gamma_i\}$. For $(A_1,S_1),(A_2,S_2)\in \Gamma$, we define $(A_1,S_1)\leq_{p}(A_2,S_2)$ if  $A_1\subseteq A_2 $, $S_1\leq_{A_1}\pi_{A_1}(S_2)$ and $\pi_i(y)\in F^{i}_{\pi_{i}(S_2)}$, for each $i\in A_2\setminus A_1$.

\medskip

First, we shall prove that $(\Gamma,\leq_{p})$ is a $\sigma$-complete up-directed partially ordered set. The reflexivity and antisymmetry  of $\leq_p$ are immediate from the orders of the sets $\Gamma_i$. For the transitivity of $\leq_p$, let $(A_1,S_1),(A_2,S_2),(A_3,S_3)\in \Gamma$ such that  $(A_1,S_1)\leq_p(A_2,S_2)$ and $(A_2,S_2)\leq_p (A_3,S_3)$. By definition of $\leq_p$, we have that $S_1\leq_{A_1}\pi_{A_1}(S_2)$ and $S_2\leq_{A_2}\pi_{A_2}(S_3)$. Using that $A_1\subseteq A_3$, $\pi_{A_1}(S_2)\leq_{A_1}\pi_{A_1}(S_3)$ and the transitivity of the orders of the sets $\Gamma_i$, for each $i\in A_1$, we obtain that $S_1\leq_{A_1}\pi_{A_1}(S_3)$. Now, let $i\in A_3\setminus A_1$, if $i\in A_3\setminus A_2$, then using that $(A_2,S_2)\leq_p (A_3,S_3)$ we have $\pi_{i}(y)\in F^{i}_{\pi_i(S_3)}$. Now, if $i\in A_2\setminus A_1$, then  $\pi_{i}(y)\in F^{i}_{\pi_i(S_2)}$. Since $\pi_{i}(S_2)\leq_{i} \pi_{i}(S_3)$ and that $\mathcal{F}_i$ satisfies $(a)$, we have that  $F^{i}_{\pi_{i}(S_2)}\subseteq  F^{i}_{\pi_{i}(S_3)}$ and $\pi_{i}(y)\in F^{i}_{\pi_{i}(S_3)}$. Thus, for each $i\in A_3\setminus A_1$, $\pi_i(y)\in F^{i}_{\pi_i(S_3)}$. Hence, $(A_1,S_1)\leq_p (A_3,S_3)$.
Now, we will prove that $\Gamma$ is an up-directed set. Let $(A_1,S_1),(A_2,S_2)\in \Gamma$.  For each $i\in A_1\cap A_2$, using that $\Gamma_i$ is up-directed,  fix $s_i\in \Gamma_i$ such that $s_i\geq_{i}\pi_i(S_1)$ and $s_i\geq_{i} \pi_i(S_2)$. For $i\in A_2\setminus A_1$, let  $s_i$ be such that $s_i\geq_{i} \pi_{i}(S_2)$ and $\pi_i(y)\in F^{i}_{s_i}$. For $i\in A_1\setminus A_2$, let  $s_i$ be such that $s_i\geq_{i} \pi_{i}(S_1)$ and $\pi_i(y)\in F^{i}_{s_i}$. Let $S\in \Pi_{i\in A_1\cup A_2}\Gamma_i$ such that $\pi_i(S)=s_i$, for each $i\in A_1\cup A_2$. We have that $(A_1\cup A_2,S)\in \Gamma$ and it is clear that $(A_1,S_1)\leq_p(A_1\cup A_2,S)$ and $(A_2,S_2)\leq_p(A_1\cup A_2,S)$.
For the $\sigma$-completeness, let $\langle (A_n,S_n)\rangle_{n<\omega}\subseteq \Gamma$ be an increasing sequence and let $A=\bigcup_{n<\omega}A_n$. For each $i\in A$, let $n_i=\min\{n<\omega:i\in A_n\}$,  $s_i=\sup_{n\geq n_i}\pi_i(S_n)$; and  define $S\in \Pi_{i\in A}\Gamma_i$ by $\pi_i(S)=s_i$, for each $i\in A$. We have that $(A,S)\in \Gamma$ and claim  that  $(A,S)=\sup_{n<\omega}(A_n,S_n)$. Indeed, first we prove that $(A_n,S_n)\leq_p(A,S)$, for each $n<\omega$. Fix $n<\omega$. By definition,  we have that $A_n\subseteq A$. By the choice of $S$, we have that $S_n\leq_{A_n}\pi_{A_n}(S)$. For $i\in A\setminus A_n$, there exists $n'\geq n$ such that $i\in A_{n'}$. Using that $(A_n,S_n)\leq_p(A_{n'},S_{n'})$, we have that $\pi_i(y)\in F^i_{\pi_i(S_{n'})}$.
 Since $\mathcal{F}_i$ satisfies $(a)$ and $\pi_i(S_{n'})\leq_i \pi_i(S)$, we deduce that $F^i_{\pi_i(S_{n'})}\subseteq F^i_{\pi_i(S)}$. Hence, $\pi_{i}(y)\in F^i_{\pi_i(S)}$ and $(A_n,S_n)\leq_p (A,S)$. Now, let $(A',S')\in \Gamma$ be such that $(A_n,S_n)\leq_p (A',S')$, for each $n<\omega$. We shall prove that $(A,S)\leq_p (A',S')$. Since $A_n\subseteq A'$, for each $n<\omega$, $A\subseteq A'$. For each $i\in A$, we have that $\pi_i(S)=\sup_{n\geq n_i}\pi_i(S_n)$ and $\pi_i(S_n)\leq_i \pi_i(S')$, for all $n\geq n_i$. It then follows that $\pi_i(S)\leq_i \pi_i(S')$ and $S\leq_A \pi_A(S')$. For $i\in A'\setminus A$, is easy to see $\pi_(y)\in F^i_{\pi_i(S')}$. We conclude that $(A,S)\leq_p(A',S')$ and $(A,S)=\sup_{n<\omega}(A_n,S_n)$.

 \medskip

 For each $(A,S)\in \Gamma$, let $F_{(A,S)}:=\Pi_{i\in A}F^{i}_{\pi_{i}(S)}\times \Pi_{i\in I\setminus A}\{\pi_i(y)\}$. The family $\mathcal{F}=\{F_{(A,S)}:(A,S)\in \Gamma\}$ of cosmic closed subspaces of $\Pi_{i\in I}X_i$ satisfies $(a)$. From the density of $Y_i$ in $X_i$, for each $i\in I$, we  deduce  that $\bigcup\mathcal{F}$ is dense in $\Pi_{i\in I}X_i$. Let $\varphi:\Gamma\rightarrow [\bigsqcup_{i\in I}T_i]^{\leq\omega}$ be defined by $\varphi((A,S))=\bigcup_{i\in A}\varphi_{i}(\pi_{i}(S))$, for each $(A,S)\in \Gamma$. We have that $\varphi$ is  $\omega$-monotone. Indeed, let $(A_1,S_1),(A_2,S_2)\in \Gamma$, such that $(A_1,S_1)\leq_p (A_2,S_2)$. We have that $\pi_{A_1}(S_1)\leq_p \pi_{A_1}(S_2)$. Hence, $\varphi((A_1,S_1))=\bigcup_{i\in A_1}\varphi_i(\pi_i(S_1))\subseteq \bigcup_{i\in A_2}\varphi_i(\pi_i(S_2))=\varphi((A_2,S_2))$. Let $\langle(A_n,S_n)\rangle_{n<\omega}$ be an increasing sequence in $\Gamma$ and $(A,S)=\sup_{n<\omega}(A_n,S_n)$. It is easy to see that $\varphi((A_n,S_n))\subseteq \varphi((A,S))$, for each $n<\omega$,  it then follows that $ \bigcup_{n<\omega}\varphi((A_n,S_n))\subseteq \varphi((A,S))$. Let $i\in A$ and $n_i=\min\{n<\omega:i\in A_n\}$,  we know that $\pi_i(S)=\sup\{\pi_i(S_n): n\geq n_i\}$. Using that $\varphi_i$ is $\omega$-monotone, we have that $\varphi_i(\pi_i(S))=\bigcup_{n\geq n_i}\varphi_i(\pi_i(S_n))\subseteq \bigcup_{n\geq n_i}\varphi((A_n,S_n))$.  Since    $\varphi((A,S))=\bigcup_{i\in A}\varphi_{i}(\pi_{i}(S))$, we deduce that  $\varphi((A,S))\subseteq\bigcup_{n<\omega}\varphi((A_n,S_n))$. Hence, $\varphi((A,S))=\bigcup_{n<\omega}\varphi((A_n,S_n))$; that is, $\varphi$ is $\omega$-monotone. Finally, for every $(A,S)\in \Gamma$, we shall prove that $\pi_{\varphi((A,S))}\upharpoonright_{F_{(A,S)}}$ is an homeomorphism.  Note  that
 $$\pi_{\varphi((A,S))}(X)=\Pi_{i\in A}\pi_{\varphi_{i}(\pi_{i}(S))}(X_i)=\Pi_{i\in A}\pi_{\varphi_{i}(\pi_{i}(S))}(F^{i}_{\pi_{i}(S)})=\pi_{\varphi((A,S))}(F_{(A,S)}).
 $$ Since $\pi_{\varphi_{i}(\pi_{i}(S))}\upharpoonright_{F^{i}_{\pi_{i}(S)}}$ is an homeomorphism, it  is clear that
$\pi_{\varphi((A,S))}\upharpoonright_{F_{(A,S)}}$ is an homeo\-morphism. Therefore, $(\mathcal{F}=\{F_{(A,S)}:(A,S)\in \Gamma\},\varphi)$ is a $\pi$-skeleton on  $ \Pi_{i\in I}X_i$.
\end{proof}

The previous theorem provides  an alternative  proof of the fact that the product of spaces with $r$-skeleton admits a $r$-skeleton, this proof does not use the theory  of elementary submodels as  it was done in \cite{cuth1}.

\begin{corollary}\label{preguntapi}
The  arbitrary product of compact spaces with $r$-skeleton admit an $r$-skeleton.
\end{corollary}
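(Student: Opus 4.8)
The plan is to chain together the two principal results of this section, since all the substantive work has already been carried out in them. Let $\{X_i:i\in I\}$ be an arbitrary family of compact spaces, each of which admits an $r$-skeleton. First I would apply Theorem \ref{teoprin} to each factor: for every $i\in I$, the $r$-skeleton on $X_i$ gives rise to a $\pi$-skeleton, which means we may fix an embedding $X_i\subseteq [0,1]^{T_i}$ together with a pair $(\mathcal{F}_i,\varphi_i)$ satisfying conditions $(a)$--$(c)$ of the definition of a $\pi$-skeleton.

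Next, observe that $\Pi_{i\in I}X_i$ is compact by Tychonoff's theorem. Since each factor carries a $\pi$-skeleton, Theorem \ref{preserprod} applies and produces a $\pi$-skeleton on $\Pi_{i\in I}X_i$; concretely, this $\pi$-skeleton lives on the copy of the product sitting inside $[0,1]^{\bigsqcup_{i\in I}T_i}$ obtained by taking the $T_i$ pairwise disjoint, exactly as in the proof of Theorem \ref{preserprod}. Finally, I would invoke the converse direction of Theorem \ref{teoprin}, now applied to the compact space $\Pi_{i\in I}X_i$: the existence of a $\pi$-skeleton on it yields an $r$-skeleton on it, which is the assertion of the corollary.

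There is essentially no obstacle here beyond bookkeeping, as the genuine difficulty has been absorbed into Theorems \ref{teoprin} and \ref{preserprod}. The only points that merit a moment's attention are that Theorem \ref{teoprin} is stated for compact spaces, so one must first note (via Tychonoff) that a product of compact spaces is again compact before applying the $\pi$-skeleton$\,\Rightarrow\,$$r$-skeleton implication; and that one need not worry about which particular embedding of the product into a cube is used, since Lemma \ref{obs12} already shows the notion of $\pi$-skeleton is insensitive to this choice. As the authors remark, the advantage of this route is that it sidesteps the elementary-submodel machinery employed in \cite{cuth1}.
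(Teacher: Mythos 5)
Your proposal is correct and follows exactly the paper's route: apply Theorem \ref{teoprin} to each factor to obtain $\pi$-skeletons, combine them via Theorem \ref{preserprod}, and then use the converse direction of Theorem \ref{teoprin} on the (compact) product. The paper's own proof is just the one-line citation of these two theorems, so your elaboration matches it precisely.
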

\begin{proof}
This is a consequence of Theorem \ref{teoprin} and Theorem \ref{preserprod}.
\end{proof}

Concerning the reciprocal of Theorem \ref{preserprod} we may ask the following.

 \begin{question}
 If $X_1$ and $X_2$ are  compact spaces such that $X_1\times X_2$ admits a $\pi$-skeleton,  must either $X_1$ or $X_2$ admit a $\pi$-skeleton?
 \end{question}

The question when $X_1$ and $X_2$ are  compact Valdivia spaces is posed in \cite[Q. 2.3]{kalenda2}.

\medskip

Finally, we point out that the notion of $\pi$-skeleton somehow resembles the definition of Valdivia spaces using the notion of $\Sigma$-product.

%%%%%%%%%%%%%%%%%%%%%%%%%%%%%%%%%%%%%%%%%%%%%%%%%%%%%%%%
\section{weak $c$-skeletons}

%%%%%%%%%%%%%%%%%%%%%%%%%%%%%%%

Our first task of this section is to weaken the notion of  a  $c$-skeleton:

\medskip

For a family $\{F_s:s\in \Gamma\}$ of closed spaces on a compact space $(X,\tau)$ we recall the following conditions  used in the previous section:
 \begin{itemize}
 \item[$(a)$] $F_s\subseteq F_{t}$, whenever $s\leq t$ and
 \item[$(b)$] $\bigcup_{s\in \Gamma}F_s$ is dense in $X$.
 \end{itemize}
If  $\mathcal{B}$ is a base for a topology on $X$ and $x \in X$,  then define $\mathcal{N}(\mathcal{B},x):=\{U\in \mathcal{B}:x\in U\}$. A function  $\psi:\Gamma \rightarrow [\tau]^{\leq\omega}$ is called an {\it assignment of bases for}  $\{F_s:s\in \Gamma\}$  if  $\psi(s)$ is a base for a topology on $X$, $\psi(s)\upharpoonright_{F_s}:=\{U\cap F_s:U\in \psi(s)\}$ is a base for $F_s$ and $\emptyset\notin \psi(s)\upharpoonright_{F_s}$.

\begin{definition}\label{wcesq}
Let $(X,\tau)$ be a space. A pair $(\{F_s:s\in \Gamma\},\psi)$ is a \textit{weak $c$-skeleton} on $X$ if the following conditions hold:
\begin{enumerate}
\item\label{cond0}  $\{F_s:s\in \Gamma\}$ is a family of cosmic  subspaces of $X$ which satisfies $(a)$ and $(b)$;

\item\label{cond01} $\psi$ is an assignment of bases for  $\{F_s:s\in \Gamma\}$; and

\item \label{condition1} if $x\in X$ and  $z\in \bigcap_{U\in \mathcal{N}(\psi(s),x)}\overline{U\cap F_s}$, then for each $V\in \mathcal{N}(\psi(s),z)$ there is $U\in \mathcal{N}(\psi(s),x)$ such that $\overline{U\cap F_s}\subseteq V$.
\end{enumerate}
 The {\it induced space} of a weak $c$-skeleton is $\bigcup_{s\in \Gamma}F_s$.
\end{definition}

Let us  see how from a $c$-skeleton  we can obtain a weak $c$-skeleton:  Consider a $c$-skeleton $\{(F_s,\mathcal{B}_s):s\in \Gamma\}$ on a space $X$. If we define $\psi(s) = \mathcal{B}_s$ for every $s \in \Gamma$, then it is evident that $\psi$ is an assignment of bases for  $\{F_s:s\in \Gamma\}$. Conditions (\ref{cond0}) and (\ref{cond01}) of a weak $c$-skeleton easily hold.  The existence of a  Tychonoff space $Z_s$ and continuous map $g_s:(X,\psi(s))\rightarrow Z_s$ which separates the points of $F_s$ implies  the third Condition, for each $s\in\Gamma$.

\medskip
The next technical lemma shows that  a closed subset of a compact space  is a retract whenever we have a base which satisfies the condition (\ref{condition1}).
\begin{lemma}\label{lemaconti}
Let $(X,\tau)$ be a compact space and  $F\subseteq X$ be a cosmic closed subspace. Then the next conditions are equivalents:

\begin{itemize}
\item There is  a base  $\mathcal{B}\in [\tau]^{\leq\omega}$ for a topology on $X$ such that  $\mathcal{B}\upharpoonright_{F}$ is a base for $F$, $U\cap F\neq \emptyset$ for each $U\in \mathcal{B}$ and $F$ and $\mathcal{B}$ satisfies the condition (\ref{condition1})  of the definition of weak $c$-skeleton.

\item There exist a retraction  $r:X\rightarrow F$.
\end{itemize}

\end{lemma}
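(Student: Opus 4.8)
The equivalence has two directions, and the substantive content is the implication from the existence of a base $\mathcal{B}$ with property (\ref{condition1}) to the existence of a retraction $r\colon X\to F$. I will sketch this direction first, then note why the converse is routine.

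\textbf{From the base to the retraction.} Given $x\in X$, consider the set
$$K_x := \bigcap_{U\in\mathcal{N}(\mathcal{B},x)}\overline{U\cap F}.$$
Since $X$ is compact and the family $\{\overline{U\cap F}:U\in\mathcal{N}(\mathcal{B},x)\}$ is a family of closed sets with the finite intersection property (any finite intersection of basic neighborhoods of $x$ contains a basic neighborhood of $x$, which meets $F$ by hypothesis), $K_x$ is nonempty; and $K_x\subseteq F$ because $F$ is closed. The plan is to show $K_x$ is a \emph{singleton}, so that we may define $r(x)$ to be its unique element; for $x\in F$ one checks $K_x=\{x\}$ using that $\mathcal{B}\upharpoonright_F$ is a base for $F$, so $r$ fixes $F$ pointwise, i.e. $r$ is a retraction provided it is continuous. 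To see $K_x$ is a singleton, suppose $z\in K_x$ and pick any $V\in\mathcal{N}(\mathcal{B},z)$. By condition (\ref{condition1}) there is $U\in\mathcal{N}(\mathcal{B},x)$ with $\overline{U\cap F}\subseteq V$, hence $K_x\subseteq\overline{U\cap F}\subseteq V$. Thus every neighborhood (from the base) of every point $z\in K_x$ contains all of $K_x$; since $\mathcal{B}$ is a base for a (by construction Hausdorff? — here is the subtlety, see below) topology on $X$ and $F$ carries the subspace topology of the original Hausdorff topology, distinct points of $K_x\subseteq F$ have disjoint neighborhoods in $\mathcal{B}\upharpoonright_F$, forcing $K_x$ to be a single point.

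\textbf{Continuity of $r$.} This is where I expect the real work. The plan is: fix $x\in X$ and a basic open set $V\in\mathcal{B}$ with $r(x)\in V$, equivalently $K_x\subseteq V$ (using that $r(x)$ is the unique point of $K_x$ and that a smaller basic neighborhood of $r(x)$ can be taken, shrinking $V$ so that $\overline{V\cap F}$ stays inside the target neighborhood). By condition (\ref{condition1}) applied at the point $z=r(x)\in K_x$, there is $U\in\mathcal{N}(\mathcal{B},x)$ with $\overline{U\cap F}\subseteq V$. I claim $U$ (or rather an open neighborhood of $x$ in the original topology sitting inside $U$) is mapped into $\overline{V\cap F}$ by $r$: for $y\in U$ with $y$ near $x$, every basic neighborhood of $y$ eventually meets $U$, so $K_y\subseteq\overline{U\cap F}\subseteq V$, whence $r(y)\in V\cap F$. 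Making this precise — in particular reconciling neighborhoods in the coarse topology generated by $\mathcal{B}$ with neighborhoods in the original topology $\tau$, and arranging the shrinking of $V$ to an element whose closure-in-$F$ is controlled — is the main obstacle, and it is exactly the point where condition (\ref{condition1}) is used in full strength (it is the analogue of the role played by the separating maps $g_s$ in a $c$-skeleton).

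\textbf{The converse.} If $r\colon X\to F$ is a retraction, then since $F$ is cosmic, hence second countable (a compact cosmic space is metrizable separable), we may fix a countable base $\mathcal{C}$ of $F$ and set $\mathcal{B}:=\{r^{-1}(W):W\in\mathcal{C}\}$. Then $\mathcal{B}\subseteq[\tau]^{\le\omega}$, each $r^{-1}(W)$ meets $F$ (it contains $W$), $\mathcal{B}\upharpoonright_F=\mathcal{C}$ is a base for $F$, and condition (\ref{condition1}) follows from continuity of $r$ together with the fact that for $x\in X$ the intersection $\bigcap_{W\ni r(x)}\overline{r^{-1}(W)\cap F}$ reduces to $\{r(x)\}$: given $z$ in this intersection, $z=r(x)$, and any $V=r^{-1}(W)$ with $z\in V$ already satisfies $\overline{V\cap F}\subseteq$ (a slightly larger such set), which one arranges by regularity of the metrizable space $F$. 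I would present the converse briefly since it is essentially bookkeeping, and devote the bulk of the written proof to the continuity argument in the forward direction.
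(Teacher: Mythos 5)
Your proposal is correct and follows essentially the same route as the paper: define $r(x)$ as the unique point of $\bigcap_{U\in\mathcal{N}(\mathcal{B},x)}\overline{U\cap F}$ (nonempty by compactness and the finite intersection property, a singleton by Hausdorffness of $F$ together with condition (\ref{condition1})), and for the converse pull back a countable base of $F$ along $r$ and use regularity of $F$. The ``main obstacle'' you flag in the continuity step is not actually one: since $\mathcal{B}\subseteq\tau$, the set $U\in\mathcal{N}(\mathcal{B},x)$ produced by condition (\ref{condition1}) is already $\tau$-open, and for every $y\in U$ one has $U\in\mathcal{N}(\mathcal{B},y)$, hence $r(y)\in\overline{U\cap F}\subseteq V$, which is precisely the inclusion $U\subseteq r^{-1}(V\cap F)$ that the paper's proof uses.
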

\begin{proof} Suppose that $\mathcal{B}\in [\tau]^{\leq\omega}$ is a base for a topology on $X$ such that  $\mathcal{B}\upharpoonright_{F}$ is a base for $F$, $U\cap F\neq \emptyset$ for each $U\in \mathcal{B}$ and $F$ and $\mathcal{B}$ satisfies the condition (\ref{condition1})  of definition of weak $c$-skeleton.   Let $x\in X$. First, we will prove that $\bigcap_{V\in \mathcal{N}(\mathcal{B},x)}\overline{V}\cap F\neq\emptyset$. Since $\mathcal{B}$ is a base for a topology on $X$, we have that $\mathcal{N}(\mathcal{B},x)\neq \emptyset$. As $U\cap F\neq \emptyset$, for every $U\in \mathcal{N}(\mathcal{B},x)$, we have that  $\{\overline{U}\cap F: U\in \mathcal{N}(\mathcal{B},x)\}$ is a nonempty family of nonempty closed subsets. We claim that $\{\overline{U}\cap F: U\in \mathcal{N}(\mathcal{B},x)\}$ has the finite intersection property. In fact, let $U_1,U_2\in \mathcal{N}(\mathcal{B},x)$. Since $\mathcal{B}$ is a base for a topology on $X$, there is $U_3\in \mathcal{N}(\mathcal{B},x)$ such that $U_3\subseteq U_1\cap U_2$. Hence, $\emptyset\neq \overline{U_3}\cap F\subseteq \overline{U_1\cap U_2}\cap F$. As  $\{\overline{U}\cap F: U\in \mathcal{N}(\mathcal{B},x)\}$  has the finite intersection property, by the compactness  of $X$, we have that  $\bigcap_{U\in \mathcal{N}(\mathcal{B},x)}\overline{U}\cap F\neq \emptyset$. Now, we will show that $|\bigcap_{U\in \mathcal{N}(\mathcal{B},x)}\overline{U}\cap F|=1$. Pick two distinct points $y_0,y_1\in \bigcap_{U\in \mathcal{N}(\mathcal{B},x)}\overline{U}\cap F$. Since  $F$ is Hausdorff and $\mathcal{B}\upharpoonright_{F}$ is base for $F$, there are $W_0\in \mathcal{N}(\mathcal{B},y_0)$ and $W_1\in \mathcal{N}(\mathcal{B},y_1)$ such that $W_0\cap W_1\cap F=\emptyset$. Using condition  (\ref{condition1}) of the Definition \ref{wcesq}, there are $V_0,V_1\in \mathcal{N}(\mathcal{B},x)$ such that $\overline{V_0\cap F}\subseteq W_0$ and $\overline{V_1}\cap F\subseteq W_1$. However, we have that  $\emptyset\neq \bigcap_{U\in \mathcal{N}(\mathcal{B},x)}\overline{U}\cap F \subseteq \overline{V_0}\cap F\cap \overline{V_1}\cap F \subseteq W_0\cap W_1\cap F$, which contradicts that $W_0$ and $W_1$ are disjoints sets in $F$. Hence, $|\bigcap_{U\in \mathcal{N}(\mathcal{F},x)}\overline{U\cap F}|=1$. Now, consider  the function  $r:X\rightarrow F$ such that  $r(x)$ is the unique point of $ \bigcap_{V\in \mathcal{N}(\mathcal{B},x)}\overline{V}\cap F$, for every $x\in X$. From the definition of $r$,  it is easy to note that if $x\in F$ and  $V\in\mathcal{B}$, then $r(x)=x$ and $r(V)\subseteq \overline{V}\cap F$.  Let us show  that $r$ is continuous.  Fix $U\in \mathcal{B}$. Remember that $\mathcal{B}\upharpoonright_{F}$ is  base for the subspace $F$. We assert that $r^{-1}(U\cap F)$ is an open subset of $X$. Indeed, fix $x\in X$ such that $r(x)\in U\cap F$. According to condition (\ref{condition1}) of Definition \ref{wcesq}, we may choose  $V\in \mathcal{N}(\mathcal{B},x)$ with $\overline{V}\cap F\subseteq U$. Since $r(V)\subseteq \overline{V}\cap F$, we have that $V\subseteq r^{-1}(\overline{V}\cap F)\subseteq r^{-1}(U\cap F)$. Thus, $r$ is a continuous function. Finally,   $r$ is a retraction because $r\upharpoonright_{F}$ is the identity.

\medskip

Now, assume that $r:X\rightarrow F$ is a retraction. Let $\mathcal{B}'$ be a countable base for the space $F$ and $\mathcal{B}:=\{r^{-1}(V):V\in \mathcal{B}'\}$. We note that $\mathcal{B}\in [\tau]^{\leq\omega}$ is a base for a topology on $X$, $\mathcal{B}\upharpoonright_{F}$ is a base for $F$ and $U\cap F\neq \emptyset$, for each $U\in \mathcal{B}$. It remains to show that the third condition of the definition of weak $c$-skeleton is satisfied. Indeed, fix   $ x\in X$.  First, we will prove that $r(x)\in\bigcap_{V\in \mathcal{N}(\mathcal{B},x)}\overline{V\cap F}$. To have this done choose $V\in \mathcal{N}(\mathcal{B},x)$ and $U\in \mathcal{N}(\mathcal{B},r(x))$. We assert that $U\cap V\cap F\neq \emptyset$. Certainly, we can find $V',U'\in \mathcal{B}'$ such that $V=r^{-1}(V')$ and $U=r^{-1}(U')$. Then, we have that  $r(x)\in V'$, and since $r$ is a retraction,  $r(x)\in U'$. Hence, $r(x)\in U'\cap V'$. As $\mathcal{B}'$ is a base in $F$, there exist $W'$ such that $r(x)\in W'\subseteq U'\cap V'$. Thus, $x\in r^{-1}(W')\subseteq r^{-1}(U'\cap V')= U\cap V$. As $r^{-1}(W')\in\mathcal{B}$, we obtain that $\emptyset\neq r^{-1}(W')\cap F\subseteq U\cap V\cap F$. It then follows that $r(x)\in \overline{V}\cap F$ and so  $r(x)\in\bigcap_{V\in \mathcal{N}(\mathcal{B},x)}\overline{V\cap F}$.
 Next, we will show that $|\bigcap_{U\in \mathcal{N}(\mathcal{B},x)}\overline{U\cap F}|=1$. Suppose that  $y\in \bigcap_{U\in \mathcal{N}(\mathcal{B},x)}\overline{U\cap F}$ and $y\neq r(x)$. Since  $F$ is Hausdorff and $\mathcal{B}\upharpoonright_{F}$ is base for $F$, there are $W_0\in \mathcal{N}(\mathcal{B},y)$ and $W_1\in \mathcal{N}(\mathcal{B},r(x))$ such that $W_0\cap W_1\cap F=\emptyset$. As $r$ is a retraction, $r(x)\in r(W_1)=W_1'$. Hence, $x\in r^{-1}(W_1')=W_1\in\mathcal{B}$. Since $y\in \overline{W_1}\cap F$, we have that $W_0\cap W_1\cap F\neq \emptyset$, but this contradicts the choice of $W_0$ and $W_1$.  Hence, $|\bigcap_{U\in \mathcal{N}(\mathcal{B},x)}\overline{U\cap F}|=1$ and $\{r(x)\}=\bigcap_{V\in \mathcal{N}(\mathcal{B},x)}\overline{V\cap F}$.  Finally,  to prove condition (\ref{condition1}). Consider   $r^{-1}(W)\in \mathcal{N}(\mathcal{B},r(x))$ where $W\in \mathcal{B}$. As $r$ is a retraction, $r(x)=r(r(x))\in W$. Choose $W'\in \mathcal{B}$ with $r(x)\in W'\subseteq \overline{W'}\subseteq W$. Hence, $x\in r^{-1}(W')\subseteq r^{-1}(\overline{W'})\subseteq r^{-1}(W)$. Thus, $\overline{r^{-1}(W')\cap F}\subseteq r^{-1}(W)$ and condition (\ref{condition1}) is satisfied for the point $x$.
\end{proof}

\begin{remark}\label{observacionlemaconti}
Implicitly in the proof of   Lemma \ref{lemaconti} there is a useful equality that we will use in the proofs of our next results: $\{r(x)\}=\bigcap_{V\in \mathcal{N}(\mathcal{B},x)}\overline{V}\cap F$, where $r: X \to F$ is the retraction obtained in the previous lemma.
\end{remark}

\medskip

In the next result, we show that the notion of $r$-skeleton implies the notion of weak $c$-skeleton.

  \begin{theorem}\label{weakteo1}
  If $X$ is a compact space with an $r$-skeleton $\{r_s:s\in \Gamma \}$, then $X$ admits a weak $c$-skeleton $(\{F_s:s\in \Gamma \},\psi)$ such that  $\{r_s(x)\}=\bigcap_{U\in \mathcal{N}(\psi(s),x)}\overline{U}\cap F$, for each $x\in X$ and each $s\in \Gamma$.
 \end{theorem}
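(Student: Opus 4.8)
The plan is to construct the pair $(\{F_s:s\in\Gamma\},\psi)$ directly from the given $r$-skeleton and then verify the three conditions of Definition \ref{wcesq} together with the extra claim about $\bigcap_{U\in\mathcal N(\psi(s),x)}\overline U\cap F_s$. First I would set $F_s:=r_s(X)$ for every $s\in\Gamma$; since $r_s(X)$ is cosmic, condition $(a)$ follows from property $(ii)$ of Definition \ref{skeleton} (because $r_s\circ r_t=r_s$ for $s\le t$ forces $r_s(X)\subseteq r_t(X)$), and condition $(b)$, i.e. density of $\bigcup_{s\in\Gamma}F_s$, is immediate from property $(iv)$ of Definition \ref{skeleton}. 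For the assignment of bases, for each $s$ fix a countable base $\mathcal B_s'$ of the cosmic space $F_s$ and put $\psi(s):=\{r_s^{-1}(V):V\in\mathcal B_s'\}$; this is exactly the base produced in the second half of the proof of Lemma \ref{lemaconti}, so $\psi(s)$ is a countable base for a topology on $X$, $\psi(s)\upharpoonright_{F_s}$ is a base for $F_s$, and $\emptyset\notin\psi(s)\upharpoonright_{F_s}$ since $U\cap F_s\neq\emptyset$ for every $U\in\psi(s)$ (as $r_s$ is a retraction onto $F_s$).

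Next I would obtain condition (\ref{condition1}) of Definition \ref{wcesq} and the displayed equality $\{r_s(x)\}=\bigcap_{U\in\mathcal N(\psi(s),x)}\overline U\cap F_s$ in one stroke, by invoking Lemma \ref{lemaconti}: applied to the cosmic closed subspace $F=F_s$ and the retraction $r=r_s$, the second-to-first direction of that lemma yields precisely a base $\mathcal B$ with the stated properties satisfying (\ref{condition1}), and moreover — as recorded in Remark \ref{observacionlemaconti} — that base satisfies $\{r_s(x)\}=\bigcap_{V\in\mathcal N(\mathcal B,x)}\overline V\cap F_s$ for every $x\in X$. Since the base $\mathcal B$ constructed in the proof of Lemma \ref{lemaconti} is exactly $\{r_s^{-1}(V):V\in\mathcal B_s'\}=\psi(s)$, all of this transfers verbatim to $\psi(s)$, giving both (\ref{condition1}) for the pair $(\{F_s:s\in\Gamma\},\psi)$ and the required identification of $r_s(x)$.

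The only remaining point is condition (\ref{cond0}) in full, namely that $\{F_s:s\in\Gamma\}$ really is indexed by the same $\sigma$-complete up-directed $\Gamma$ and satisfies $(a)$–$(b)$; this is routine from the $r$-skeleton axioms as indicated above (note that nothing about conditions $(iii)$ or the monotonicity of $\psi$ is demanded in Definition \ref{wcesq}, so we need not verify any $\omega$-monotone behaviour of $s\mapsto\psi(s)$). I expect no serious obstacle: the entire content has been pre-packaged in Lemma \ref{lemaconti} and Remark \ref{observacionlemaconti}, and the theorem is essentially a "global" restatement of that lemma, applied uniformly across $s\in\Gamma$, with $F_s=r_s(X)$ and $\psi(s)$ the pullback of a countable base. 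The mildest care needed is to make sure the base chosen for each $F_s$ is fixed once and for all so that $\psi$ is a well-defined function on $\Gamma$, and to observe that the induced space $\bigcup_{s\in\Gamma}F_s=\bigcup_{s\in\Gamma}r_s(X)$ of the weak $c$-skeleton coincides with the induced space of the $r$-skeleton — a remark that will be convenient when comparing the two notions later.
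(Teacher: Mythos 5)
Your proposal is correct and follows essentially the same route as the paper: set $F_s:=r_s(X)$, fix a countable base $\mathcal{B}_s$ of $F_s$, define $\psi(s):=r_s^{-1}(\mathcal{B}_s)$, deduce conditions (\ref{cond01}) and (\ref{condition1}) from Lemma \ref{lemaconti} (with the displayed equality coming from Remark \ref{observacionlemaconti}), and obtain $(a)$ and $(b)$ from the $r$-skeleton axioms. No substantive difference from the paper's argument.
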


 \begin{proof}
   Let $\{r_s:s\in \Gamma\}$ be an $r$-skeleton on $X$ where $Y$ is its induced space. For each $s\in \Gamma$, put $F_s=r_s(X)$, choose   a countable base $\mathcal{B}_s$ of $F_s$ and  define $\psi(s):=r^{-1}_s(\mathcal{B}_s)$. Consider this function $\psi:\Gamma\rightarrow [\tau]^{\leq\omega}$. From Lemma \ref{lemaconti} we have that   $\psi:\Gamma\rightarrow [\tau]^{\leq\omega}$ satisfies conditions (\ref{cond01}) and  (\ref{condition1}). As $\{r_s:s\in \Gamma\}$ is an $r$-skeleton, the conditions $(i)$, $(ii)$ and $(iv)$ of the $r$-skeleton definition  imply that $\{F_s:s\in \Gamma\}$ is a family of cosmic spaces which satisfies $(a)$ and $(b)$. Therefore, $(\{F_s:s\in\Gamma\},\psi)$ is a weak $c$-skeleton.
 \end{proof}

The next example is a compact space which admits a weak $c$-skeleton but does not admit an $r$-skeleton. This space is one of the examples of non-Valdivia compact spaces, given  by O. Kalenda in \cite{kalenda1}, to show that the  continuous images of Valdivia compact spaces are not necessarily Valdivia compact spaces.

\begin{example}
Let $X$ the space obtained from $[0,\omega_1]\times\{0,1\}$  by identifying the points $(\omega_1,0)$ and  $(\omega_1,1)$ and denote by $\tau_X$ the quotient topology on $X$. First, we shall prove that $X$ admits a weak $c$-skeleton. Let $\mathcal{B}$ a base for the space $[0,\omega_1]$. The set $\Gamma :=\omega_1\times \omega_1$ is a $\sigma$-complete up-directed partially ordered set with the order defined by  $(\alpha,\beta)\leq_{\Gamma}(\alpha',\beta')$ if $\alpha\leq \alpha'$ and $\beta \leq \beta'$. For every $\alpha<\omega_1$, fix a subset $\mathcal{B}_\alpha\in [\mathcal{B}]^{\leq \omega}$ such that $\{U\cap [0,\alpha]:U\in \mathcal{B}_\alpha\}$ is a base for the subspace $[0,\alpha]$ and if $\alpha\notin U\in \mathcal{B}_\alpha$, then $U\subseteq [0,\alpha)$. For $\alpha<\omega_1$ and  $i\in \{0,1\}$, let $\mathcal{B}_\alpha\times \{i\}:=\{U\times \{i\}:U\in \mathcal{B}_\alpha\}$.
For each, $(\alpha,\beta)\in \Gamma$, define $F_{(\alpha,\beta)}:=([0,\alpha]\times\{0\})\cup ([0,\beta])\times\{1\}$ and
$$
\psi((\alpha,\beta)):= (\mathcal{B}_\alpha\times\{0\}) \cup (\mathcal{B}_\beta\times\{1\}) \cup\{\big( (U\cup (\alpha,\omega_1])\times\{0\}\big)\cup \big ( (W\cup (\beta,\omega_1])\times\{1\}\big):
 $$
 $$
U\in \mathcal{B}_\alpha, \alpha\in U, W\in\mathcal{B}_\beta \mbox{ and } \beta\in W\}.
$$
We assert that $(\{F_{(\alpha,\beta)}:(\alpha,\beta)\in \Gamma\},\psi)$ is a weak $c$-skeleton on $X$. It is easy to see that $\{F_{(\alpha,\beta)}:(\alpha,\beta)\in\Gamma\}$ satisfies the condition (\ref{cond0}). Now, we shall prove that $\psi$ satisfies condition (\ref{cond01}). Fix $(\alpha,\beta)\in \Gamma$.
By the choice of the sets $\mathcal{B}_\alpha$ and $\mathcal{B}_\beta$, we have that $\psi((\alpha,\beta))\upharpoonright_{F_{(\alpha,\beta)}}$ is a base for the space $F_{(\alpha,\beta)}$ and $\emptyset\notin \psi((\alpha,\beta))\upharpoonright_{F_{(\alpha,\beta)}}$.  We shall prove that $\psi((\alpha,\beta))$ is a base for a topology on $X$. Indeed, by assumption, $\psi((\alpha,\beta))$ covers  $X$. Let $U, U'\in \mathcal{B}_\alpha$ and $W, W'\in \mathcal{B}_\beta$. Let us consider three cases:
\begin{itemize}

\item[Case 1] If $x\in (U\times\{0\})\cap (U'\times\{0\})$ and $\alpha\notin U\cap U'$,  since $\mathcal{B}_{\alpha}$ is base for $[0,\alpha]$, then we have that  there exist $V\in \mathcal{B}_\alpha$ such that $x\in (V\times\{0\})\subseteq  (U\times\{0\})\cap (U'\times\{0\})$ and $V\times\{0\}\in \psi((\alpha,\beta))$.
\item[Case 2] When $x\in (W\times\{1\})\cap(W'\times\{1\})$ and $\beta\notin W\cap W'$ we proceed in a  similar way as in the previous case.
\item [Case 3]Let us suppose $x\in (((U\cup (\alpha,\omega_1])\times\{0\})\cup((W\cup (\beta,\omega_1])\times\{1\}))\cap (((U'\cup (\alpha,\omega_1])\times\{0\})\cup((W'\cup (\beta,\omega_1])\times\{1\}))$. As $\mathcal{B}_\alpha$ and $\mathcal{B}_\beta$ are bases for the spaces $[0,\alpha]$ and $[0,\beta]$, respectively, we can find  $V\in \mathcal{B}_\alpha$ and $V'\in \mathcal{B}_\alpha$ such that either $x\in (V\cup(\alpha,\omega_1])\times\{0\}\subseteq ((U\cup(\alpha,\omega_1])\times\{0\})\cap ((U'\cup(\alpha,\omega_1])\times\{0\})$ or  $x\in (V'\cup (\beta,\omega_1])\times\{1\}\subseteq ((W\cup(\beta,\omega_1])\times\{1\})\cap ((W'\cup(\beta,\omega_1])\times\{1\})$. Thus, $x\in ((V\cup(\alpha,\omega_1])\times\{0\})\cup ((V'\cup (\beta,\omega_1])\times\{1\}) \subseteq (((U\cup (\alpha,\omega_1])\times\{0\})\cup((W\cup (\beta,\omega_1])\times\{1\}))\cap (((U'\cup (\alpha,\omega_1])\times\{0\})\cup((W'\cup (\beta,\omega_1])\times\{1\}))$.
\end{itemize}
This shows that  $\psi((\alpha,\beta))$ is a base for a topology on $X$ and so the condition (\ref{cond01}) holds.
    It remains to show the condition (\ref{condition1}). Fix $(\alpha,\beta)\in \Gamma$ and $x\in X$. First, we suppose  $x\in F_{(\alpha,\beta)}$. It is easy to see that $\bigcap_{U\in \mathcal{N}(\psi((\alpha,\beta)),x)} \overline{U\cap F_{(\alpha,\beta)}}=\{x\}$, and condition (\ref{condition1}) follows trivially. Now, let us suppose that $x\notin F_{(\alpha,\beta)}$. We have that   $\bigcap_{U\in \mathcal{N}(\psi((\alpha,\beta)),x)} \overline{U\cap F_{(\alpha,\beta)}}=\{(\alpha,0),(\beta,1)\}$. Note that if $z\in \{(\alpha,0),(\beta,1)\}$ and  $U\in \mathcal{N}(\psi((\alpha,\beta)),z)$, then $U\in \mathcal{N}(\psi((\alpha,\beta)),x)$ and so   condition \ref{condition1} holds. Therefore, $(\{F_{(\alpha,\beta)}:(\alpha,\beta)\in \Gamma\},\psi)$ is a weak $c$-skeleton on $X$. We know that $X$ cannot be a Valdivia compact space and does not admit an $r$-skeleton, because every compact space with weight $\aleph_1$ and $r$-skeleton is a Valdivia compact space (see \cite{kubis1}).
\end{example}

In the next theorem, we establish conditions on a weak $c$-skeleton in order to be equivalent to an $r$-skeleton. Thus we obtain a characterization of spaces that admit an $r$-skeleton in terms of a special weak $c$-skeleton.

\begin{theorem}\label{teoprincipalceqsresq} For a compact space $X$, the following statements are equivalent:
\begin{itemize}
\item  There is a weak $c$-skeleton  $(\{F_s:s\in\Gamma \}, \psi)$ on $X$ which satisfies
\begin{itemize}
\item[$(*)$] For every $s\leq t$, $x\in X$, $z\in \bigcap_{U\in \mathcal{N}(\psi(t),x)}\overline{U\cap F_t}$ and  $V\in \mathcal{N}(\psi(s),x)$, there is $W\in \mathcal{N}(\psi(s),z)$ such that $\overline{W\cap F_s}\subseteq \overline{V\cap F_s}$; and

\item[$(**)$] if $\langle s_n \rangle_{n<\omega}$ is an increasing sequence in $\Gamma$ with $s
=\sup\langle s_n\rangle_{n<\omega}$, then for each $x\in X$ and $U\in \mathcal{N}(\psi(s),x)$ there is $m<\omega$ such that $\overline{U\cap F_s}\cap {V\cap F_{s_n}}\neq \emptyset$ for every  $n\geq m$ and for every $V\in \mathcal{N}(\psi(s_n),x)$.
\end{itemize}
\item  There is an $r$-skeleton on $X$.
\end{itemize}
\end{theorem}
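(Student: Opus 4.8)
The plan is to prove the two implications separately, invoking Theorem~\ref{weakteo1} for the forward direction and Lemma~\ref{lemaconti} together with Proposition~\ref{rey2} for the converse. A fact I would record at the start and reuse throughout is that, whenever $(\{F_s:s\in\Gamma\},\psi)$ satisfies the conditions defining an assignment of bases, one has $\overline U\cap F_s=\overline{U\cap F_s}$ for every $U\in\psi(s)$: given $y\in\overline U\cap F_s$ and a $\tau$-open $N\ni y$, one finds $W\in\psi(s)$ with $y\in W\cap F_s\subseteq N\cap F_s$, notes $W\cap U\neq\emptyset$ (since $y\in\overline U$ and $W\in\tau$), picks $W'\in\psi(s)$ with $W'\subseteq W\cap U$, and uses $\emptyset\notin\psi(s)\upharpoonright_{F_s}$ to get $\emptyset\neq W'\cap F_s\subseteq N\cap U\cap F_s$. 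This lets me pass freely between $\overline U\cap F_s$ (as in Remark~\ref{observacionlemaconti}) and $\overline{U\cap F_s}$ (as in Definition~\ref{wcesq} and in $(*)$, $(**)$). For the implication from an $r$-skeleton, Theorem~\ref{weakteo1} already supplies the weak $c$-skeleton $(\{F_s:s\in\Gamma\},\psi)$ with $F_s=r_s(X)$, $\psi(s)=r_s^{-1}(\mathcal B_s)$ for a countable base $\mathcal B_s$ of $F_s$, $\{r_s(x)\}=\bigcap_{U\in\mathcal N(\psi(s),x)}\overline U\cap F_s$, and $U\cap F_s=U'$ whenever $U=r_s^{-1}(U')$. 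Then $(*)$ follows because $z\in\bigcap_{U\in\mathcal N(\psi(t),x)}\overline{U\cap F_t}$ forces $z=r_t(x)$, so for $s\leq t$ axiom~$(ii)$ of Definition~\ref{skeleton} gives $r_s(z)=r_s(x)$ and hence $\mathcal N(\psi(s),z)=\mathcal N(\psi(s),x)$, and $W=V$ works; and $(**)$ follows because, by axiom~$(iii)$, $r_{s_n}(x)\to r_s(x)\in U'$ with $U'$ open in $F_s$ and all $r_{s_n}(x)\in F_{s_n}\subseteq F_s$, so for large $n$ the point $r_{s_n}(x)$ itself lies in $U'\cap V'\subseteq\overline{U\cap F_s}\cap V\cap F_{s_n}$ for every $V=r_{s_n}^{-1}(V')\in\mathcal N(\psi(s_n),x)$.

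For the converse, let $(\{F_s:s\in\Gamma\},\psi)$ be a weak $c$-skeleton satisfying $(*)$ and $(**)$, where $\Gamma$ is up-directed and $\sigma$-complete. Lemma~\ref{lemaconti}, applied for each $s$, yields a retraction $r_s:X\to F_s$ with $\{r_s(x)\}=\bigcap_{V\in\mathcal N(\psi(s),x)}\overline V\cap F_s=\bigcap_{V\in\mathcal N(\psi(s),x)}\overline{V\cap F_s}$; axiom~$(i)$ of Definition~\ref{skeleton} is clear since $F_s$ is cosmic. For axiom~$(ii)$ with $s\leq t$: $r_s(x)\in F_s\subseteq F_t$ gives $r_t\circ r_s=r_s$ at once; for $r_s\circ r_t=r_s$, set $z=r_t(x)$, note $z\in\bigcap_{U\in\mathcal N(\psi(t),x)}\overline{U\cap F_t}$, and apply $(*)$ to obtain, for each $V\in\mathcal N(\psi(s),x)$, some $W\in\mathcal N(\psi(s),z)$ with $r_s(z)\in\overline{W\cap F_s}\subseteq\overline{V\cap F_s}$; as this holds for all such $V$, $r_s(z)\in\bigcap_V\overline{V\cap F_s}=\{r_s(x)\}$.

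For axiom~$(iii)$, fix $x$ and an increasing $\langle s_n\rangle$ with $s=\sup_n s_n$. Given $U\in\mathcal N(\psi(s),x)$, condition $(**)$ yields $m$ with $\overline{U\cap F_s}\cap(V\cap F_{s_n})\neq\emptyset$ for all $n\geq m$ and all $V\in\mathcal N(\psi(s_n),x)$; since $\mathcal N(\psi(s_n),x)$ is a filter base, a finite-intersection argument in the compact space $X$ upgrades this to $\overline{U\cap F_s}\cap\bigcap_V\overline{V\cap F_{s_n}}\neq\emptyset$, i.e.\ $r_{s_n}(x)\in\overline{U\cap F_s}$ for all $n\geq m$. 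A second compactness argument shows that the downward-directed closed sets $\overline{U\cap F_s}$, $U\in\mathcal N(\psi(s),x)$, whose intersection is $\{r_s(x)\}$, eventually sit inside any prescribed neighbourhood of $r_s(x)$; the two facts together give $r_{s_n}(x)\to r_s(x)$. Finally $\bigcup_s r_s(X)=\bigcup_s F_s$ is dense in $X$ by $(b)$, so axiom~$(iv)$ follows from Proposition~\ref{rey2}, and $\{r_s:s\in\Gamma\}$ is an $r$-skeleton.

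I expect the genuine obstacle to be axiom~$(iii)$ of the converse: extracting honest convergence $r_{s_n}(x)\to r_s(x)$ from the rather indirect condition $(**)$. Making this work requires two separate compactness/filter-base arguments---one pulling $r_{s_n}(x)$ into $\overline{U\cap F_s}$ via the identity $\{r_{s_n}(x)\}=\bigcap_V\overline{V\cap F_{s_n}}$, one showing $\{\overline{U\cap F_s}:U\in\mathcal N(\psi(s),x)\}$ behaves like a neighbourhood base of $r_s(x)$ among closed sets---and it relies on $\mathcal N(\psi(s),x)$ being directed by reverse inclusion, which is exactly what ``$\psi(s)$ is a base for a topology'' buys us. A recurring minor nuisance is keeping track of whether closures are taken in $X$ or in an $F_s$, which is defused once and for all by the equality $\overline U\cap F_s=\overline{U\cap F_s}$ noted at the start.
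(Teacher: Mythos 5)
Your proposal is correct and follows essentially the same route as the paper: Lemma \ref{lemaconti} produces the retractions, $(*)$ yields $r_s\circ r_t=r_s$, $(**)$ plus a finite-intersection/compactness argument yields $(iii)$, condition $(b)$ with Proposition \ref{rey2} yields $(iv)$, and Theorem \ref{weakteo1} handles the converse via the observation that $z$ must equal $r_t(x)$. Your explicit lemma that $\overline{U}\cap F_s=\overline{U\cap F_s}$ for $U\in\psi(s)$ is a worthwhile addition that the paper leaves implicit, and your variant of $(iii)$ (applying $(**)$ at $x$ and showing the sets $\overline{U\cap F_s}$ form a neighbourhood base of $r_s(x)$) is a minor, equally valid rearrangement of the paper's argument.
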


\begin{proof} Let  $(\{F_s:s\in\Gamma \}, \psi)$ be a weak $c$-skeleton that satisfies $(*)$ and $(**)$.
For each  $s\in \Gamma$, consider  the retraction $r_s$ given in  Lemma \ref{lemaconti}. We shall prove that $\{r_s: s\in \Gamma\}$ is an $r$-skeleton on $X$. In fact, the condition $(i)$  is true because $r_s(X)=F_s$ and $F_s$ is a cosmic space, for each $s\in \Gamma$.
For condition $(ii)$, fix $s, t \in \Gamma$ with  $s\leq t$. On one hand, we know that $r_s(X)=F_s\subseteq F_t=r_t(X)$ and $r_t$ is the identity on $F_t$, hence,  $r_t\circ r_s=r_s$. On the other hand, fix $x\in X$.
Using  condition $(*)$, for every $V\in \mathcal{N}(\psi(s),x)$  choose $W_V\in \mathcal{N}(\psi(s),r_{t}(x)) $ such that $\overline{W_V\cap F_s}\subseteq \overline{V\cap F_s}$. It then follows that  $\bigcap_{V\in \mathcal{N}(\psi(s),x)}\overline{W_V\cap F_s}\subseteq \bigcap_{V\in \mathcal{N}(\psi(s),x)}\overline{V\cap F_s}$. So $\bigcap_{W\in \mathcal{N}(\psi(s),r_t(x))}\overline{W\cap F_s}\subseteq\bigcap_{V\in \mathcal{N}(\psi(s),x)}\overline{W_V\cap F_s}$ and this implies that
$$
\{r_s(r_t(x))\}=\bigcap_{W\in \mathcal{N}(\psi(s),r_t(x))}\overline{W\cap F_s}\subseteq \bigcap_{V\in \mathcal{N}(\psi(s),x)}\overline{V\cap F_s}=\{r_s(x)\}.
 $$
 Thus, $r_s\circ r_t=r_s$ and $(ii)$ holds.
For condition $(iii)$, let $\langle s_n\rangle_{n<\omega}$ be an increasing sequence in $\Gamma$ and $s=\sup\langle s_n\rangle_{n<\omega}$. Fix $x\in X$ and $U\in \mathcal{N}(\psi(s),r_s(x))$. Choose $V\in \mathcal{N}(\psi(s),r_s(x))$ such that $\overline{V}\cap F_s\subseteq U\cap F_s$. For condition $(**)$,  pick $m<\omega$ such that for every $n\geq m$  we have that $\overline{V\cap F_s}\cap {W\cap F_{s_n}}\neq \emptyset$, for every $W\in \mathcal{N}(\psi(s_n),r_s(x))$. Fix $n\geq m$. By property $(**)$, it is easy to see that $\{\overline{V\cap F_s}\cap {W\cap F_{s_n}}: W\in \mathcal{N}(\psi(s_n),r_s(x)) \}$ has the finite intersection property, so $\{\overline{V\cap F_s}\cap \overline{W\cap F_{s_n}}: W\in \mathcal{N}(\psi(s_n),r_s(x)) \}$ has the finite intersection property. Since $F_s$ is a compact space,  we have that $\overline{V\cap F_s}\cap \big(\bigcap_{W\in \mathcal{N}(\psi(s_n),r_s(x))}\overline{W\cap F_{s_n}}\big)\neq \emptyset$. As $\{r_{s_n}(x)\}=\{r_{s_n}(r_s(x))\}= \bigcap_{W\in \mathcal{N}(\psi(s_n),r_s(x))}\overline{W\cap F_{s_n}}$, we obtain that $r_{s_n}(x)\in \overline{V\cap F_s}\subseteq U$. Thus, for each $n\geq m$, $r_{s_n}(x)\in U$; that  is, $r_{s_n}(x)\rightarrow r_s(x)$ in $F_s$.  Hence, $r_{s_n}(x)\rightarrow r_s(x)$ in $X$ and $(iii)$ holds.
  The last  condition $(iv)$ follows from the fact that  $\{F_s:s\in \Gamma\}$  satisfies condition $(b)$.  Thus we have shown that $\{r_s: s\in \Gamma\}$ is an $r$-skeleton on $X$.

\medskip

Assume that $\{r_s:s\in \Gamma\}$ is an $r$-skeleton on $X$ and $Y$ is the induced space. For each $s\in \Gamma$,   define $F_s:=r_s(X)$, choose   a countable base $\mathcal{B}_s$ of $F_s$ and put  $\psi(s):=r^{-1}_s(\mathcal{B}_s)$. Theorem \ref{weakteo1} asserts that $(\{F_s:s\in\Gamma \},\psi)$ is a weak $c$-skeleton on $X$ with induced set $Y$.  To establish property $(*)$, let $s\leq t$, $x\in X$ and $W\in \mathcal{B}_s$ such that $x\in r^{-1}_s(W)$. According to Theorem \ref{weakteo1}, we know that $\{r_t(x)\}=\bigcap_{U\in \mathcal{N}(\psi(t),x)}\overline{U\cap F_t}$.  Since $\{r_s:s\in \Gamma\}$ is an $r$-skeleton, we have that $r_s(r_t(x))=r_s(x)\in W$. Hence, $r_t(x)\in r^{-1}_s(W)$ and $(*)$ follows trivially. For property $(**)$, let $\langle s_n\rangle_{n<\omega}\subseteq \Gamma$ be an increasing sequence and suppose that $s=\sup\langle s_n\rangle_{n<\omega}$, $x\in X$ and $W\in \mathcal{B}_s$ satisfy that $x\in r^{-1}_s(W)$. As $\{r_s:s\in\Gamma\}$ is an $r$-skeleton, $r_{s_n}(x)\rightarrow r_s(x)$. Since $r_s(x)\in W$, we have that there is $m<\omega$, such that for every $n\geq m$, $r_{s_n}(x)\in W$. Let $n\geq m$ and $r^{-1}_{s_n}(W')\in \mathcal{N}(\psi(s_n),x)$, where  $W'\in \mathcal{B}_{s_n}$. Since $\{r_{s_n}(x)\}=\bigcap_{U\in \mathcal{N}(\psi(s_{n}),x)}\overline{U\cap F_{s_n}}$, we have that  $r_{s_n}(x)\in \overline{r^{-1}_{s_n}(W')\cap F_{s_n}}$.  As $r_{s}(r_{s_n}(x))=r_{s_n}(x)$ and $r_{s_n}(x)\in W$ , we have that  $r_{s_n}(x)\in r^{-1}_{s}(W)$. Using that $r_{s_n}(x)\in \overline{r^{-1}_{s_n}(W')\cap F_{s_n}}$, we have that $r^{-1}_{s}(W)\cap r^{-1}_{s_n}(W')\cap F_{s_n}'\neq \emptyset$.   Since $r^{-1}_{s}(W)\cap r^{-1}_{s_n}(W')\cap F_{s_n}=r^{-1}_{s}(W)\cap F_s\cap r^{-1}_{s_n}(W')\cap F_{s_n}$,   we  conclude that $\overline{r^{-1}_s(W)\cap F_s}\cap r^{-1}_{s_n}(W')\cap F_s \neq \emptyset$ and then $(**)$ holds. Therefore, $(\{F_s:s\in \Gamma\},\psi)$ is a weak $c$-skeleton on $X$ which satisfies $(*)$ and $(**)$.
\end{proof}

The Corollary 5.6 of the paper \cite{casa1} shows that the notion of full $c$-skeleton is equivalent to the notion of full $r$-skeleton; that is, the Corson compact spaces are the spaces which admits a full $c$-skeleton. This result motivates the next  characterization of a  Valdivia compact space by mean of a special  weak $c$-skeleton.

\begin{theorem} For a compact space  $X$, the following statements are equivalent:
\begin{itemize}
\item  There is a weak $c$-skeleton $(\{F_s:s\in\Gamma \}, \psi)$ on $X$  which satisfies
\begin{itemize}

\item[$(**)$] if $\langle s_n \rangle_{n<\omega}$ is an increasing sequence in $\Gamma$ with $s
=\sup\langle s_n\rangle_{n<\omega}$, then for each $x\in X$ and $U\in \mathcal{N}(\psi(s),x)$ there is $m<\omega$ such that $\overline{U\cap F_s}\cap {V\cap F_{s_n}}\neq \emptyset$ for every  $n\geq m$ and for every $V\in \mathcal{N}(\psi(s_n),x)$; and

\item[$(***)$] for every $s,t\in \Gamma$, $x\in X$ and  $V\in \mathcal{N}(\psi(s),r_t(x))$, there is $U_V\in \mathcal{N}(\psi(t),r_s(x))$ such that $\emptyset\neq \overline{U_V}\cap F_t\cap F_s\subseteq \overline{V}\cap F_s$; and $\{\overline{U_V}\cap F_t\cap F_s: V\in  \mathcal{N}(\psi(s),r_t(x))\}$ has the finite intersection property.
\end{itemize}
\item  There is a commutative $r$-skeleton on $X$.
\end{itemize}
\end{theorem}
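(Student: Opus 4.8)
The plan is to mirror the two-directional argument of Theorem~\ref{teoprincipalceqsresq}, now targeting a commutative $r$-skeleton and reading off everything routine from Theorem~\ref{weakteo1}; the one genuinely new point is that commutativity has to be squeezed out of condition~$(***)$. For the implication "weak $c$-skeleton with $(**)$ and $(***)$ $\Rightarrow$ commutative $r$-skeleton", let $(\{F_s:s\in\Gamma\},\psi)$ be such a weak $c$-skeleton. Condition~(\ref{condition1}) makes Lemma~\ref{lemaconti} applicable, so for each $s\in\Gamma$ there is a retraction $r_s:X\to F_s$ with $\{r_s(x)\}=\bigcap_{U\in\mathcal{N}(\psi(s),x)}\overline{U}\cap F_s$ for all $x\in X$ (Remark~\ref{observacionlemaconti}). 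Property $(i)$ of an $r$-skeleton is clear since the $F_s$ are cosmic, property $(iv)$ follows from $(b)$ through Proposition~\ref{rey2}, and property $(iii)$ will be obtained from $(**)$ exactly as in the proof of Theorem~\ref{teoprincipalceqsresq} (that argument invokes $r_{s_n}\circ r_s=r_{s_n}$, which is available once $(ii)$ is known).

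The heart of the matter is to deduce $r_s\circ r_t=r_t\circ r_s$ for all $s,t\in\Gamma$ from $(***)$; then $(ii)$ is immediate, because for $s\le t$ we have $r_s(X)=F_s\subseteq F_t$ by $(a)$, hence $r_t\circ r_s=r_s$, and commutativity upgrades this to $r_s\circ r_t=r_s$. To prove commutativity, fix $s,t\in\Gamma$ and $x\in X$ and apply $(***)$ to the pair $(s,t)$ at $x$: for each $V\in\mathcal{N}(\psi(s),r_t(x))$ choose $U_V\in\mathcal{N}(\psi(t),r_s(x))$ with $\emptyset\neq\overline{U_V}\cap F_t\cap F_s\subseteq\overline{V}\cap F_s$, the family $\{\overline{U_V}\cap F_t\cap F_s:V\}$ having the finite intersection property. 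Since $F_s\cap F_t$ is closed in the compact space $X$, the set $K=\bigcap_V(\overline{U_V}\cap F_t\cap F_s)$ is nonempty; as $K\subseteq\bigcap_V(\overline{V}\cap F_s)=\{r_s(r_t(x))\}$, we get $K=\{r_s(r_t(x))\}$, and in particular $r_s(r_t(x))\in F_t$. Running the same argument with the pair $(t,s)$ yields $r_t(r_s(x))\in F_s$. Finally, $r_t(r_s(x))$ lies in $\overline{W}\cap F_t$ for every $W\in\mathcal{N}(\psi(t),r_s(x))$, hence in $\overline{U_V}\cap F_t$ for every $V$; together with $r_t(r_s(x))\in F_s$ this forces $r_t(r_s(x))\in K=\{r_s(r_t(x))\}$, so $r_t(r_s(x))=r_s(r_t(x))$. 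I expect this squeeze --- manufacturing one common point out of the finite intersection property and then reading it off as both composites --- to be the main obstacle, and it is exactly why $(***)$ bundles together the inclusion and the finite intersection clause.

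For the converse, start from a commutative $r$-skeleton $\{r_s:s\in\Gamma\}$, fix a countable base $\mathcal{B}_s$ of each $F_s:=r_s(X)$ and set $\psi(s):=r_s^{-1}(\mathcal{B}_s)$ as in Theorem~\ref{weakteo1}, which already gives a weak $c$-skeleton; condition~$(**)$ is checked just as in Theorem~\ref{teoprincipalceqsresq} from $r_{s_n}(x)\to r_s(x)$. For $(***)$, fix $s,t\in\Gamma$, $x\in X$ and $V=r_s^{-1}(V')\in\mathcal{N}(\psi(s),r_t(x))$ with $V'\in\mathcal{B}_s$. By commutativity $r_t(r_s(x))=r_s(r_t(x))\in V'$, so $r_s^{-1}(V')$ is an open neighbourhood of $r_t(r_s(x))$ in $X$; since the cosmic compact space $F_t$ is metrizable, hence regular, choose $U'\in\mathcal{B}_t$ with $r_t(r_s(x))\in U'$ and $\overline{U'}\cap F_t\subseteq r_s^{-1}(V')$, and put $U_V:=r_t^{-1}(U')\in\mathcal{N}(\psi(t),r_s(x))$. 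A routine net argument gives $\overline{r_t^{-1}(U')}\cap F_t=\overline{U'}\cap F_t$, whence $\overline{U_V}\cap F_t\cap F_s\subseteq r_s^{-1}(V')\cap F_s=V'\subseteq\overline{V}\cap F_s$; this set contains $r_t(r_s(x))\in F_s$, so it is nonempty, and since that same point belongs to every member of $\{\overline{U_V}\cap F_t\cap F_s:V\}$ the finite intersection clause holds trivially. This produces a weak $c$-skeleton satisfying $(**)$ and $(***)$, completing the equivalence.
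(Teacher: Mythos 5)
Your proposal is correct and follows essentially the same route as the paper: build the retractions via Lemma \ref{lemaconti}, extract commutativity from $(***)$ by squeezing the nonempty compact intersection $\bigcap_V\bigl(\overline{U_V}\cap F_t\cap F_s\bigr)$ between $\{r_s(r_t(x))\}$ and a set containing $r_t(r_s(x))$, and for the converse take $\psi(s)=r_s^{-1}(\mathcal{B}_s)$ and verify $(***)$ using commutativity. Your write-up is in fact a bit more careful than the paper's at the one delicate point (justifying $r_t(r_s(x))\in F_s$ by applying $(***)$ with $s$ and $t$ interchanged, where the paper only asserts the relevant equality of intersections), and your regularity-of-$F_t$ shortcut in the converse replaces the paper's compactness/finite-subfamily step without loss.
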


\begin{proof}
Let us suppose  $(\{F_s:s\in\Gamma \}, \psi)$ is  a weak $c$-skeleton on $X$ which satisfies $(**)$ and $(***)$. According the proof of the Theorem \ref{teoprincipalceqsresq}, $(\{F_s:s\in\Gamma \}, \psi)$ induces a family of retractions $\{r_s:s\in \Gamma\}$ on $X$ which satisfies $(i)$ and $(iv)$ of the $r$-skeleton definition, $\{r_s(x)\}=\bigcap_{V\in \mathcal{N}(\psi(s),x)}\overline{V\cap F_s}$ for each $s\in \Gamma$ and each $x\in X$; and  $r_s=r_t\circ r_s$ provided that $s\leq t$. Next, to prove  the commutativity  of the retractions (also, this implies $(ii)$) fix $s,t\in \Gamma$. From $(***)$ we have that  for each $V\in \mathcal{N}(\psi(s),r_t(x))$ we can find  $U_V\in \mathcal{N}(\psi(t),r_s(x))$ so that $\overline{U_V}\cap F_t\cap F_s\subseteq \overline{V}\cap F_s$. As  $\{\overline{U_V}\cap F_t\cap F_s:V\in \mathcal{N}(\psi(s),r_t(x))\}$ has the finite intersection property and $\bigcap_{V\in \mathcal{N}(\psi(t),r_s(x)) }\overline{U_V}\cap F_t\cap F_s\subseteq \bigcap_{V\in \mathcal{N}(\psi(t),r_s(x)) }\overline{V}\cap F_s$, we have that $\bigcap_{V\in \mathcal{N}(\psi(t),r_s(x)) }\overline{U_V}\cap F_t\cap F_s=\{r_t(r_x(x))\}$. Now, the equality $\bigcap_{U\in \mathcal{N}(\psi(s),r_t(x)) }\overline{U}\cap  F_s = \bigcap_{V\in \mathcal{N}(\psi(t),r_s(x)) }\overline{U_V}\cap F_t\cap F_s $ implies  that $\{r_s(r_t(x))\}=\bigcap_{V\in \mathcal{N}(\psi(t),r_s(x)) }\overline{U_V}\cap F_t\cap F_s $. Hence, $r_s(r_t(x))=r_t(r_s(x))$. This proves the commutativity of the retractions and we also obtain condition $(ii)$. Using the condition $(ii)$ and $(**)$ and some ideas from the  proof in the Theorem \ref{teoprincipalceqsresq}, we obtain $(iii)$.
Therefore,  $\{r_s:s\in \Gamma\}$ is a commutative $r$-skeleton.
\medskip

Now, let us suppose that $\{r_s:s\in \Gamma\}$ is a commutative $r$-skeleton. For each  $s\in \Gamma$  define  $F_s:=r_s(X)$, choose   a countable base $\mathcal{B}_s$ of $F_s$ and  define $\psi(s):=r^{-1}_s(\mathcal{B}_s)$. From Theorem \ref{teoprincipalceqsresq} we have  that $(\{F_s:s\in\Gamma \},\psi)$ is a weak $c$-skeleton on $X$ with induced set $Y$ which satisfy conditions   $(*)$ and $(**)$. It remains to prove condition $(***)$. Let $s,t\in \Gamma$ and $x\in X$.  Fix $r^{-1}_s(W),r^{-1}_s(V)\in \mathcal{N}(\psi(s),r_t(x))$ so that $r_t(x)\in r^{-1}_s(W)\subseteq \overline{r^{-1}_s(W))}\subseteq r^{-1}_s(V)$. We have that $r_s(r_t(x))\in W$. Since $r_s$ is a retraction  and $W\subseteq F_s$, we obtain that  $r_s(r_s(r_t(x)))\in W$. Hence, $r_s(r_t(x))\in r^{-1}_s(W)$. By the commutativity of the $r$-skeleton, we obtain that $ \bigcap_{U\in \mathcal{N}(\psi(t),r_s(x)) }\overline{U}\cap F_t=\{r_t(r_s(x))\}=\{r_s(r_t(x))\}=\bigcap_{W'\in \mathcal{N}(\psi(s),r_t(x)) }\overline{W'}\cap F_s\subseteq r^{-1}_s(W)\cap F_s$. That is, $\bigcap_{U\in \mathcal{N}(\psi(t),r_s(x)) }\overline{U}\cap F_t\subseteq r^{-1}_s(W)\cap F_s$.  Since $r_t(r_s(x))=r_s(r_t(x))$, we have that $\bigcap_{U\in \mathcal{N}(\psi(t),r_s(x)) }\overline{U}\cap F_t\cap F_s=\bigcap_{U\in \mathcal{N}(\psi(t),r_s(x)) }\overline{U}\cap F_t\subseteq r^{-1}_s(W)\cap F_s$. By the compactness  of $X$, there is $k<\omega$ and $\{U_0,...,U_k\}\subseteq \mathcal{N}(\psi(t),r_s(x))$ such that $\bigcap^{k}_{i=0}\overline{U_i}\cap F_t\cap F_s\subseteq r^{-1}_s(W)\cap F_s$. Since $\psi(t)$ is a base on  $X$, there is $U_V\in \mathcal{N}(\psi(t),r_s(x))$ such that $\overline{U_V}\cap F_t\cap F_s\subseteq \bigcap^{k}_{i=0}\overline{U_i}\cap F_t\cap F_s$. Hence, $\overline{U_V}\cap F_t\cap F_s\subseteq r^{-1}_s(W)\cap F_s$. As $\bigcap_{V\in \mathcal{N}(\psi(t),r_s(x)) }\overline{U_V}\cap F_t\cap F_s =\{r_s(r_t(x))\}$, we have that $\{\overline{U_V}\cap F_t\cap F_s: V\in  \mathcal{N}(\psi(s),r_t(x))\}$ has the finite intersection property.
\end{proof}

In the paper \cite{casa1}, the authors proved that for a compact space $X$, if $X$ has a (full) $c$-skeleton, then $C_p(X)$ has a (full) $q$-skeleton. And, if $X$ has a (full) $q$-skeleton, then $C_p(X)$ has a (full) $c$-skeleton.  In this sense, we have the next problem.

\begin{question}
Let  $X$ be a compact space. Is there  a  notion weaker than the notion  of a $q$-skeleton on $C_p(X)$ that is related to the notion of a weak $c$-skeleton on $X$?
\end{question}

%%%%%%%%%%%%%%%%%%%%%%%%%%%%%%%%%%%%%%%
In the following two diagrams, we illustrate the connection among all  the topological properties that we considered in this article.

\medskip
The first diagram is about the general case:
\begin{displaymath}
    \xymatrix{ c\mbox{-skeleton} \ar[d] & \mbox{Corson} \ar[l]\ar[d] \\
               r\mbox{-skeleton} \ar[d]\ar[r] & \pi\mbox{-skeleton}\ar[l]\\
               \mbox{weak } c\mbox{-skeleton}& \mbox{Valdivia}\ar[l] \ar[u]}
\end{displaymath}

This second diagram is a particular case when the skeletons are full.
\begin{displaymath}
    \xymatrix{\mbox{full } c\mbox{-skeleton} \ar[d]\ar[r] & \mbox{Corson} \ar[l]\ar[d] \\
              \mbox{full } r\mbox{-skeleton}\ar[u] \ar[r] & \mbox{full } \pi\mbox{-skeleton}\ar[l]\ar[u]  }
\end{displaymath}

%%%%%%%%%%%%%%%%%%%%%%%%%%%%%%%%%%%%%%

\end{document}